\documentclass{article}[10pt]
\usepackage{theorem}
\usepackage{eurosym}
\usepackage{amssymb}
\usepackage{float}
\usepackage{amsmath}
\usepackage{amsfonts}
\usepackage[american]{babel}
\usepackage{verbatim}
\usepackage{geometry}

\newtheorem{theorem}{Theorem}

\newtheorem{corollary}[theorem]{Corollary}

{\theorembodyfont{\rmfamily}

}

\newtheorem{lemma}[theorem]{Lemma}

\newtheorem{proposition}[theorem]{Proposition}
{\theorembodyfont{\rmfamily}
\newtheorem{remark}[theorem]{Remark}
}

\newenvironment{proof}[1][Proof]{\noindent\textbf{#1} }{\ \rule{0.5em}{0.5em}}

\newcommand*\re{\mathbb{R}}

\newcommand*\Omegabar{\overline{\Omega}}
\newcommand*\delomega{\partial\Omega}

\newcommand*\intomega{\int_{\Omega}}
\newcommand*\notO{\backslash\{0\}}
\newcommand*\WBOmega{\mathcal{B}_1(\Omega)}

\newcommand*\WBDiskrad{W^{1,2}_{0,rad}(B_1)\cap \mathcal{B}_1(B_1)}
\newcommand*\ovu{\overline{u}}
\newcommand*\dxb{{|x|^{\beta}}}

\begin{document}
\large{

\title{Extremal Functions for the Singular Moser-Trudinger Inequality in 2 Dimensions}
\maketitle
\date{}

\centerline{\scshape Gyula Csat\'{o} and Prosenjit Roy }
\medskip
{\footnotesize
 \centerline{Tata Institute of Fundamental Research, Centre For Applicable Mathematics, 560065 Bangalore, India}
   \centerline{csato@math.tifrbng.res.in, prosenjit@math.tifrbng.res.in}

}

\smallskip

\begin{abstract}
The Moser-Trudinger embedding has been generalized by Adimuthi and Sandeep to the following weighted version: if  $\Omega\subset\re^2$ is bounded, $\alpha>0$ and $\beta\in [0,2)$ are such that 
$$
  \frac{\alpha}{4\pi}+\frac{\beta}{2}\leq1,
$$
then
$$
  \sup_{\substack {v\in W^{1,2}_0(\Omega) \\ \|\nabla v\|_{L^2}\leq 1}}
  \int_{\Omega}\frac{e^{\alpha v^2}-1}{|x|^{\beta}}\leq C.
$$
We prove that the supremum is attained, generalizing a well-known result by Flucher, who has proved the case $\beta=0.$
\end{abstract}

\let\thefootnote\relax\footnotetext{\textit{2010 Mathematics Subject Classification.} Primary , Secondary .}
\let\thefootnote\relax\footnotetext{\textit{Key words and phrases.} Moser Trudinger embedding, extremal function.}

\section{Introduction}\label{section:introduction}

Let $\Omega\subset\re^2$ be a bounded open smooth set.
The Moser-Trudinger imbedding, which is due to Trudinger \cite{Trudinger} and in its sharp form to Moser \cite{Moser}, states that the following supremum is finite
$$
  \sup_{\substack {v\in W^{1,2}_0(\Omega) \\ \|\nabla v\|_{L^2}\leq 1}}
  \int_{\Omega}\left(e^{4\pi v^2}-1\right)<\infty.
$$
First it has been shown by Carleson and Chang \cite{Carleson-Chang} that the supremum is actually attained, if $\Omega$ is a ball. In \cite{Struwe 1}, Struwe proved that the result remains true if $\Omega$ is close to a ball in measure. Then Flucher \cite{Flucher} generalized this result to arbitrary domains in $\re^2.$ See also Malchiodi-Martinazzi \cite{Malchiodi-Martinazzi} and the references therein for some recent developments on the subject.

The Moser-Trudinger embedding has been generalized by Adimurthi-Sandeep \cite{Adi-Sandeep} to a singular version, which reads as the following:
If $\alpha>0$ and $\beta\in[0,2)$ is such that
\begin{equation}
 \label{intro:eq:alpha and beta sum}
  \frac{\alpha}{4\pi}+\frac{\beta}{2}\leq 1,
\end{equation}
then the following supremum is finite
$$
  \sup_{\substack {v\in W^{1,2}_0(\Omega) \\ \|\nabla v\|_{L^2}\leq 1}}
  \int_{\Omega}\frac{e^{\alpha v^2}-1}{|x|^{\beta}}<\infty\,.
$$
We prove in this paper the following theorem, which states that the supremum is also attained for the singular Moser-Trudinger embedding.

\begin{theorem}\label{theorem:intro:Extremal for Singular Moser-Trudinger}
Let $\Omega$ be a bounded open connected smooth subset of $\re^2,$ $\alpha>0$ and $\beta\in[0,2)$ be such that \eqref{intro:eq:alpha and beta sum} is satisfied.
Then there exists $u\in W_0^{1,2}\left(\Omega\right)$ such that $\|\nabla u\|_{L^2}\leq 1$ and
$$
  \sup_{\substack {v\in W^{1,2}_0(\Omega) \\ \|\nabla v\|_{L^2}\leq 1}}
  \int_{\Omega}\frac{e^{\alpha v^2}-1}{|x|^{\beta}}=\int_{\Omega}\frac{e^{\alpha u^2}-1}{|x|^{\beta}}.
$$
\end{theorem}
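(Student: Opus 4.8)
The plan is to adapt the variational strategy of Carleson--Chang, Struwe and Flucher to the singular weight. Write
\[
  S=\sup_{\substack{v\in W^{1,2}_0(\Omega)\\ \|\nabla v\|_{L^2}\le 1}}\int_\Omega\frac{e^{\alpha v^2}-1}{|x|^\beta}\,,
\]
so $S>0$ and we seek an admissible $u$ with $\int_\Omega\frac{e^{\alpha u^2}-1}{|x|^\beta}=S$. If $\frac{\alpha}{4\pi}+\frac{\beta}{2}<1$, or if $0\notin\overline\Omega$, the embedding is strictly subcritical and $v\mapsto\int_\Omega\frac{e^{\alpha v^2}-1}{|x|^\beta}$ is continuous along weakly convergent sequences in the unit ball of $W^{1,2}_0(\Omega)$ (Vitali's theorem together with the higher integrability of $e^{\alpha v^2}$), so an extremal exists by the direct method; the case $\beta=0$ is Flucher's theorem. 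Hence the genuinely new case is $\alpha=4\pi(1-\tfrac\beta2)<4\pi$ with $0\in\Omega$, on which I focus.

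First I would pass to the subcritical problems with parameters $\alpha_k\uparrow\alpha$, whose maximizers $u_k\ge0$ exist by the above, satisfy $\|\nabla u_k\|_{L^2}=1$, and, being smooth solutions of the Euler--Lagrange equation $-\Delta u_k=\lambda_k^{-1}|x|^{-\beta}u_k e^{\alpha_k u_k^2}$ in $\Omega$ with $u_k=0$ on $\partial\Omega$ (here $\lambda_k=\int_\Omega|x|^{-\beta}u_k^2e^{\alpha_k u_k^2}$), form a maximizing sequence for $S$. Along a subsequence $u_k\rightharpoonup u$ in $W^{1,2}_0(\Omega)$ and $|\nabla u_k|^2\,dx\rightharpoonup|\nabla u|^2\,dx+\nu$ weakly-$*$ as measures, $\nu\ge0$ finite. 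If $\nu=0$ the convergence is strong, hence $\|\nabla u\|_{L^2}\le1$ and $\int_\Omega\frac{e^{\alpha u^2}-1}{|x|^\beta}=S$, so $u$ is the sought extremal. Everything thus reduces to excluding the concentration alternative $\nu\neq0$.

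A Lions-type concentration--compactness argument shows that in the concentration case $u\equiv0$, $\nu=\delta_{x_0}$ for a single $x_0\in\overline\Omega$, and $c_k:=\max_{\overline\Omega}u_k=u_k(x_k)\to\infty$ with $x_k\to x_0$. Since away from the origin the weight is smooth and bounded and the exponent $\alpha<4\pi$ is subcritical in the classical sense, no loss of compactness can occur there (were $x_0\ne0$, the functional would tend to $\int_\Omega\frac{e^{\alpha u^2}-1}{|x|^\beta}=0<S$), forcing $x_0=0$; a capacity/Pohozaev estimate near $\partial\Omega$ excludes $x_0\in\partial\Omega$ (if instead $0\in\partial\Omega$ a separate half-plane blow-up is required). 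Concentration is thus pinned at the singular point, and the crux is a sharp \emph{upper bound} for $S$ under concentration. Rescaling $u_k$ about $0$ at the natural scale $r_k$ --- chosen so that $r_k^{2-\beta}\lambda_k^{-1}c_k^2e^{\alpha_k c_k^2}$ stays bounded between two positive constants --- and using the change of variables $t=\log|x|$, which renders the $|x|^{-\beta}$-weighted radial Liouville equation autonomous, one shows $c_k\,u_k(r_k\,\cdot\,)$ converges to an explicit profile $w$ solving $-\Delta w=|y|^{-\beta}e^{w}$ on $\re^2$ with $\int_{\re^2}|y|^{-\beta}e^{w}<\infty$. Decomposing $u_k$ via the Green function $G(\cdot,0)$ of $-\Delta$ on $\Omega$ and estimating separately the bubble, the neck and the outer region then yields $S\le\Lambda$, where $\Lambda=\Lambda(\beta,\Omega)$ is a finite constant determined by $\beta$ and by the regular part of $G(\cdot,0)$ at $0$ (the $\beta$-Robin constant of $\Omega$).

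To reach a contradiction I would construct test functions $\phi_\varepsilon\in W^{1,2}_0(\Omega)$ with $\|\nabla\phi_\varepsilon\|_{L^2}\le1$, obtained by gluing a Moser-type logarithmic bubble $\sim-\frac{1}{2\pi(1-\beta/2)}\log|x|$ centred at the origin to the outer profile $-G(\cdot,0)$, and show by direct computation --- the singular analogue of the Carleson--Chang estimate --- that $\int_\Omega\frac{e^{\alpha\phi_\varepsilon^2}-1}{|x|^\beta}>\Lambda$ for all small $\varepsilon>0$. Since $S$ is the supremum over all admissible $v$, this contradicts $S\le\Lambda$, so $\nu=0$ after all and the weak limit $u$ is an extremal. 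The main obstacle I anticipate is exactly this balance: carrying out the blow-up at the \emph{singular} point to identify the limiting Liouville profile and, above all, pinning down the exact value of $\Lambda$ with the correct $\beta$-dependence and the correct weight of the Robin constant, while making the test-function construction sharp enough to beat precisely that $\Lambda$; controlling the intermediate neck region, where $u_k$ is neither of bubble size nor negligible, and excluding boundary concentration are the other delicate points.
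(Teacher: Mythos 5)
Your proposal follows the ``blow-up analysis'' route (subcritical approximation $\alpha_k\uparrow\alpha$, Euler--Lagrange maximizers $u_k$, blow-up around the concentration point, neck estimates, sharp upper bound $\Lambda$, test functions beating $\Lambda$), which is the strategy of Li and of Yang--Lu for Moser--Trudinger extremals and is a genuinely different path from the one taken in the paper. The paper instead works \emph{directly} with an arbitrary maximizing sequence, never passes to subcritical problems, never derives or uses the Euler--Lagrange equation, and never performs a blow-up at the concentration point. Its core is a Flucher-style transfer between the general domain and the unit ball: on the ball the singular functional $F_{B_1}$ is reduced to the classical $J_{B_1}$ by the Adimurthi--Sandeep change of variables $T_a(u)(x)=\sqrt{a}\,u(|x|^{1/a})$ with $a=1-\beta/2$, and Carleson--Chang then gives $F^{\delta}_{B_1}(0)<F^{\sup}_{B_1}$; the ``ball to domain'' map $P_0(v)(y)=v(e^{-2\pi G_{\Omega,0}(y)})$ and a \emph{weighted} isoperimetric inequality give $F^{\sup}_\Omega\ge I_\Omega(0)^{2-\beta}F^{\sup}_{B_1}$; a careful ``domain to ball'' construction on concentrating sequences gives the reverse inequality at the concentration level, yielding the concentration formula $F^{\delta}_\Omega(0)=I_\Omega(0)^{2-\beta}F^{\delta}_{B_1}(0)$. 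Chaining these strict and non-strict inequalities shows no maximizing sequence can concentrate, and the concentration-compactness alternative then produces the extremal. The trade-off is roughly this: your route replaces the weighted isoperimetric input with PDE machinery, but in exchange must identify a weighted Liouville profile at the rescaling limit, carry out the neck analysis, compute a sharp threshold $\Lambda$ with the correct $\beta$-dependence, and match it exactly by test functions --- all of which you rightly flag as the main obstacles and leave as sketches. The paper sidesteps every one of those steps. Also note that your treatment of the case $0\in\partial\Omega$ (a ``half-plane blow-up'') is considerably heavier than what the paper does: there one simply enlarges $\Omega$ to domains $\Omega_n$ with $0\in\Omega_n$, uses the concentration formula on $\Omega_n$, and sends $I_{\Omega_n}(0)\to0$ to conclude $F^{\delta}_\Omega(0)=0$. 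Your strategy is plausible and standard in this area, but it is not the proof the paper gives, and completing it would require substantially more technical work than what you have outlined.
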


The essential difficulty is that the functional 
$$
  u\in W^{1,2}_0(\Omega)\to F_{\Omega}(u)=\intomega\frac{e^{\alpha u^2}-1}{|x|^{\beta}}
$$
is not continuous with respect to weak convergence. To see this fact one can take the usual Moser-sequence, as in Flucher \cite{Flucher} page 472.
The proof of this theorem follows the ideas of Flucher and is based on a concentration compactness alterantive by  Lions \cite{Lions}. We give here an outline of the proof, explaining what is new compared to Flucher's result and what is not. The proof is divided into 6 sections (Section \ref{section:introduction} is introduction).
\smallskip

\textit{Section \ref{section:notation}.} We introduce some notation and definitions and recall some of their properties.
\smallskip 

\textit{Section \ref{section:prelim. results}.} The concentration compactness alternative (see Theorem \ref{theorem:concentration alternative for singular moser trudinger}) applies to the new functional $F_{\Omega}$
without change and states that if a sequence $u_i$ does not concentrate at any point of $\Omega,$ then up to a subsequence $\lim_{i\to\infty}F_{\Omega}(u_i)=F_{\Omega}(u).$ We prove in this section that the hypothesis for the concentration compactness alternative is satisfied for the new singular functional $F_{\Omega}.$ This is essentially the same as for the Moser-Trudinger functional. However, we show that for the functional $F_{\Omega}$ it is sufficient to consider the case when $0\in\Omegabar$ and a maximizing sequence concentrates at $0$ (see Proposition \ref{proposition:if u_i concentrates somewhere else than zero}). 
\smallskip

\textit{Section \ref{section ball}.} We show that the supremum is attained if $\Omega$ is the ball by using the result of Carleson-Chang \cite{Carleson-Chang} (see Theorem \ref{theorem:Carleson and Chang strict inequality}) and the transformation introduced by Adimurthi-Sandeep \cite{Adi-Sandeep}, which relates $F_{\Omega}$ to the classical Moser-Trudinger functional for radial functions (see Lemma \ref{lemma:properties of T_a}). We will in particular deduce the following strict inequality (see Theorem \ref{theorem:supremum of FOmega on Ball})
\begin{equation}\label{intro:eq:strict ineq in ball}
  F^{\delta}_{B_1}(0)<F_{B_1}^{\sup},
\end{equation}
where $F^{\delta}_{B_1}(0)$ denotes the concentration level at $0$ and the right hand side denotes the supremum of $F_{B_1}.$ 

\smallskip

\textit{Section \ref{section:ball to domain}.} In this section we establish the inequality
\begin{equation}
 \label{intro:eq:ball to domain ineq.}
  F_{\Omega}^{\sup}\geq I_{\Omega}(0)^{2-\beta}F_{B_1}^{\sup},
\end{equation}
where $I_{\Omega}(0)$ is the conformal incenter of $\Omega$ at $0$ (see Theorem \ref{theorem:ball to general domain:sup inequality}). It consists of constructing for any given radial function $v$ on the ball a corresponding function given on $\Omega,$ which satisfies the estimate $F_{\Omega}(u)\geq I_{\Omega}(0)^{2-\beta}F_{B_1}(v).$ In this step there is a crucial difference with Flucher's result where the inquality is deduced from the isoperimetric inequality. To carry out the same construction we needed a singularly weighted isoperimetric inequality, which is on its own a deep result with many consequences. It has been established in a separate paper in Csat\'o \cite{Csato}.

\smallskip 

\textit{Section \ref{section:domain to ball}.} In this section we prove a reverse inequality to \eqref{intro:eq:ball to domain ineq.} for concentrating sequences: given a concentrating sequence $\{u_i\}$ at $0$ which maximizes the concentration level $F^{\delta}_{\Omega}(0),$ one can construct a sequence $v_i$ such that 
\begin{equation}\label{intro:eq reverse ineq:domain to ball}
  F_{\Omega}^{\delta}(0)=\lim_{i\to\infty}F_{\Omega}^{\delta}(u_i)\leq I_{\Omega}^{2-\beta}(0)\liminf_{i\to\infty}F_{B_1}(v_i)\leq I^{2-\beta}_{\Omega}(0)F_{B_1}^{\delta}(0),
\end{equation}
see Proposition \ref{proposition:main domain to ball}.
This will imply the concentration formula
\begin{equation}
 \label{intro:concentration formula}
  F_{\Omega}^{\delta}(0)=I_{\Omega}^{2-\beta}(0)F_{B_1}^{\delta}(0),
\end{equation}
see Theorem \ref{theorem:concentration formula by domain to ball}. An essential difference is that if $\beta=0,$ then $F_{\Omega}^{\delta}(x)=I_{\Omega}^2(x)F_{B_1}^{\delta}(0)$ for all $x\in\Omega,$ see Flucher \cite{Flucher}, which implies, a priori, that a maximizinig sequence can concentrate only at the point $x$ where $I_{\Omega}(x)$ is maximal. Clearly $I_{\Omega}(x)$ is independent of the functional. This is in strong contrast to our case ($\beta>0$), where \eqref{intro:concentration formula} holds only at zero, since $F_{\Omega}^{\delta}(x)=0$ if $x\neq 0.$ This is due to the dependence in $x$ of the integrand of the functional $F_{\Omega}.$ In particular, the map $x\mapsto F^{\delta}_{\Omega}(x)$ is not continuous, unless $\beta=0.$ The proof of \eqref{intro:eq reverse ineq:domain to ball} is long and technical. We made a great effort to give clear and rigorous proofs of all steps and our presentation differs significantly from Flucher's paper (see also Remarks \ref{remark:Flucher point 4 mistake} 
and \ref{remark:Flucher thesis with si}).

\smallskip

\textit{Section \ref{section:proof main theorem}.} We prove here Theorem \ref{theorem:intro:Extremal for Singular Moser-Trudinger}. Combining the inequalities \eqref{intro:concentration formula}, \eqref{intro:eq:strict ineq in ball} and then \eqref{intro:eq:ball to domain ineq.}, one obtains that
$$
  F_{\Omega}^{\delta}(0)<F_{\Omega}^{\sup}.
$$
One deduces from this strict inequality 
that a maximizing sequence cannot concentrate at $0.$ It now follows easily from the results of Section \ref{section:prelim. results} that the maximum is attained.

\section{Notations and Definitions}\label{section:notation}

Throughout this paper $\Omega\subset\re^2$ will denote a bounded open set with smooth boundary $\delomega.$ Its $2$-dimensional area is written as $|\Omega|.$ The $1$-dimensional Hausdorff measure is denoted by $\sigma.$ Balls with radius $R$ and center at $x$ are written $B_R(x)\subset\re^2;$ if $x=0,$ we simply write $B_R$. The space $W^{1,2}(\Omega)$ denotes the usual Sobolev space of functions and $W^{1,2}_0(\Omega)$ those Sobolev functions with vanishing trace on the boundary. Throughout this paper $\alpha,\beta\in \re$ are two constants satisfying $\alpha>0,$ $\beta\in[0,2)$ and 
$$
  \frac{\alpha}{4\pi}+\frac{\beta}{2}\leq1.
$$

$-$ We define the the funtctional $F_{\Omega},J_{\Omega}:W_0^{1,2}(\Omega)\to \re$ by
\begin{align*}
 F_{\Omega}(u)=&\int_{\Omega}\frac{e^{\alpha u^2}-1}{|x|^{\beta}}\,dx, \smallskip \\
 J_{\Omega}(u)=&\int_{\Omega}\left(e^{4\pi u^2}-1\right)dx. \smallskip \\
\end{align*}

$-$ We say that a sequence $\{u_i\}\subset W_0^{1,2}(\Omega)$ concentrates at $x\in\Omegabar$ if 
$$
  \lim_{i\to\infty}\|\nabla u_i\|_{L^2}=1\quad\text{ and }\quad\forall\;\epsilon>0\quad
  \lim_{i\to\infty}\int_{\Omega\backslash B_{\epsilon}(x)}|\nabla u_i|^2=0.
$$
This definition implies the convergence $|\nabla u_i|^2dx\rightharpoonup \delta_x$ weakly in meausre, where $\delta_x$ is the Dirac measure at $x.$   We will use the following well known property of concentrating sequences: if $\{u_i\}$ concentrates, then $u_i\rightharpoonup 0$ in $W^{1,2}(\Omega).$ In particular
\begin{equation} 
 \label{eq:properties of concentrating sequences}
  u_i\to 0\quad\text{ in }L^2(\Omega),
\end{equation}
see for instance Flucher \cite{Flucher} Step 1 page 478.
\smallskip

$-$ We define the sets
$$
  W^{1,2}_{0,rad}(B_1)=\left\{u\in W^{1,2}_0(B_1)\,\Big|\, u\text{ is radial }\right\}
$$
and analogously $C_{c,rad}^{\infty}(B_1)$ is the set of radially symmetric smooth functions with compact support in $B_1$.
By abuse of notation we will usually write $u(x)=u(|x|)$ for $u\in W^{1,2}_{0,rad}(B_1).$
Recall that $C_{c,rad}^{\infty}(B_1)$ is dense in 
$W^{1,2}_{0,rad}(B_1)$ in the $W^{1,2}$ norm. If in addition $u$ is radially decreasing we write $u\in W^{1,2}_{0,rad\searrow}(B_1),$ respectively $u\in C_{c,rad\searrow}^{\infty}(B_1).$
\smallskip

$-$ Define 
$$
  \mathcal{B}_1(\Omega)=\left\{u\in W^{1,2}_0(\Omega)\,\big|\,\|\nabla u\|_{L^2}\leq 1\right\}.
$$

\smallskip 

$-$ Finally we define
$$
F_{\Omega}^{\text{sup}}=\sup_{u\in \mathcal{B}_1(\Omega)}F_{\Omega}(u).
$$
$J_{\Omega}^{\text{sup}}$ is defined in an ananlogous way, replacing $F$ by $J.$
If $x\in\Omegabar$ and the supremum is taken only over concentrating sequences, we write $F_{\Omega}^{\delta}(x),$ more precisely
$$
  F_{\Omega}^{\delta}(x)=\sup\left\{\limsup_{i\to\infty}F_{\Omega}(u_i)\,\Big|\quad \{u_i\}\subset \mathcal{B}_1(\Omega)\text{ concentrates at } x\right\}.
$$
We define in an analogous way $J_{\Omega}^{\delta}(x).$ If $\Omega=B_1$, then we define
$$
  F_{B_1,rad\searrow}^{\text{sup}}=\sup_{u\in W^{1,2}_{0,rad\searrow}(B_1)\cap \mathcal{B}_1(B_1)}F_{B_1}(u),
$$
$$
  F^{\delta}_{B_1,rad\searrow}(0)=\sup\left\{\limsup_{i\to\infty}F_{B_1}(u_i)\,\Big|\quad \{u_i\}\subset W_{0,rad\searrow}^{1,2}(B_1)\cap \mathcal{B}_1(B_1)\text{ concentrates at } 0\right\}.
$$
We define $J_{B_1,rad\searrow}^{\text{sup}}$ and $J^{\delta}_{B_1,rad\searrow}(0)$ in an analogous way.
\smallskip

$-$ If $\Omega\subset\re^2$ then $\Omega^{\ast}$ is its symmetric rearrangement, that is $\Omega^{\ast}=B_R(0),$ where $|\Omega|=\pi R^2.$ If $u\in W_0^{1,2}(\Omega),$ then $u^{\ast}\in W^{1,2}_{0,rad\searrow}(B_R(0))$ will denote the Schwarz symmetrization of $u.$ For basic propertis of the Schwarz symmetrization we refer to Kesavan \cite{Kesavan}, Chapters 1 and 2, which we will use throughout.
In particular we will use frequently and without further comment that if $u\in W_0^{1,2}(\Omega),$ then $u^{\ast}$ satisfies 
$$
  F_{\Omega}(u)\leq F_{B_R}(u^{\ast})\quad\text{ and }\quad \|\nabla u^{\ast}\|_{L^2(B_R)}\leq \|\nabla u\|_{L^2(\Omega)}.
$$
We will additionally  need, as in Flucher \cite{Flucher}, a slight modification of the Hardy-Littlewood, respectively P\'olya-Szeg\"o theorem, stated in the next proposition.  

\begin{proposition}
\label{prop:Hardy littlewoood modified}
(i) Lef $f\in L^p(\Omega)$ and $g\in L^q(\Omega),$ where $1/p+1/q=1.$ Then for any $a\in \re$
$$
  \int_{\{f\geq a\}}f\,g\leq \int_{\{f^{\ast}\geq a\}}f^{\ast}g^{\ast}.
$$
\smallskip

(ii) Let $u\in W^{1,2}_0(\Omega)$ such that $u\geq 0.$ Then for any $t\in (0,\infty)$
$$
  \int_{\{u^{\ast}\leq t\}}|\nabla u^{\ast}|^2\leq \int_{\{u\leq t\}}|\nabla u|^2
  \quad\text{ and }\quad \int_{\{u^{\ast}\geq t\}}|\nabla u^{\ast}|^2\leq \int_{\{u\geq t\}}|\nabla u|^2.
$$
\end{proposition}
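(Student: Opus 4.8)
The plan is to deduce both parts from the \emph{classical} Hardy--Littlewood and P\'olya--Szeg\"o inequalities (as in Kesavan \cite{Kesavan}) applied to suitable truncations, rather than reworking the rearrangement machinery. Throughout I may assume $f,g\geq 0$, and in (i) also $a>0$: if $a\leq 0$ then $\{f\geq a\}=\Omega$, $\{f^{\ast}\geq a\}=\Omega^{\ast}$, and (i) is the usual Hardy--Littlewood inequality, while if $a$ exceeds the essential supremum of $f$ both sides of (i) vanish.

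\textbf{Part (i).} The key point is the identity
$$
  \bigl(f\,\mathbf{1}_{\{f\geq a\}}\bigr)^{\ast}=f^{\ast}\,\mathbf{1}_{\{f^{\ast}\geq a\}},
$$
whose right-hand side makes sense because $f$ and $f^{\ast}$ are equimeasurable, so $|\{f^{\ast}\geq a\}|=|\{f\geq a\}|$ and $\{f^{\ast}\geq a\}$ is the ball concentric with $\Omega^{\ast}$ of that measure. This identity follows by comparing distribution functions: since $f\,\mathbf{1}_{\{f\geq a\}}$ takes only the value $0$ and values $\geq a$, its distribution function equals $|\{f>s\}|$ for $s\geq a$ and equals $|\{f\geq a\}|$ for $0\leq s<a$; the radially decreasing function $f^{\ast}\,\mathbf{1}_{\{f^{\ast}\geq a\}}$ has exactly the same distribution function, hence is the Schwarz symmetrization of $f\,\mathbf{1}_{\{f\geq a\}}$. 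Granting this, write $\int_{\{f\geq a\}}fg=\int_{\Omega}\bigl(f\,\mathbf{1}_{\{f\geq a\}}\bigr)g$, apply classical Hardy--Littlewood to bound it by $\int_{\Omega^{\ast}}\bigl(f\,\mathbf{1}_{\{f\geq a\}}\bigr)^{\ast}g^{\ast}$, and rewrite this as $\int_{\{f^{\ast}\geq a\}}f^{\ast}g^{\ast}$ via the identity.

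\textbf{Part (ii).} I would apply classical P\'olya--Szeg\"o, $\int|\nabla w^{\ast}|^{2}\leq\int|\nabla w|^{2}$, to the two truncations $w=(u-t)_{+}$ and $w=\min(u,t)$. Since $t>0$ and $u\geq 0$, each is the composition of $u$ with a Lipschitz function vanishing at $0$, hence lies in $W^{1,2}_{0}(\Omega)$; and since Schwarz symmetrization commutes with composition by a nondecreasing Borel function fixing $0$, one has $\bigl((u-t)_{+}\bigr)^{\ast}=(u^{\ast}-t)_{+}$ and $\bigl(\min(u,t)\bigr)^{\ast}=\min(u^{\ast},t)$. Because a Sobolev function has vanishing gradient a.e.\ on each of its level sets, $\int_{\Omega}|\nabla(u-t)_{+}|^{2}=\int_{\{u\geq t\}}|\nabla u|^{2}$ and $\int_{\Omega}|\nabla\min(u,t)|^{2}=\int_{\{u\leq t\}}|\nabla u|^{2}$, with the corresponding identities for $u^{\ast}$; inserting these into the two instances of P\'olya--Szeg\"o gives exactly the two inequalities claimed.

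There is no real obstacle here: the content of the proposition is precisely the reduction to the truncated functions, and the remaining ingredients -- the equimeasurability computation in (i), the two commutation properties of Schwarz symmetrization with $(\cdot-t)_{+}$ and $\min(\cdot,t)$, and the standard fact that $\nabla u=0$ a.e.\ on $\{u=t\}$ (which is what lets one pass freely between strict and non-strict level sets in the gradient integrals) -- are all classical and quotable from Kesavan \cite{Kesavan}. The step most worth stating carefully is the identity $\bigl(f\,\mathbf{1}_{\{f\geq a\}}\bigr)^{\ast}=f^{\ast}\,\mathbf{1}_{\{f^{\ast}\geq a\}}$, since it is the one place where the super-level set structure is genuinely used.
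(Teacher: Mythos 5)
The paper states this proposition without proof, citing Flucher and Kesavan, so there is no argument in the paper to compare against; your derivation is correct and is the standard one. Both reductions are sound: for (i), the identity $(f\,\mathbf{1}_{\{f\geq a\}})^{\ast}=f^{\ast}\,\mathbf{1}_{\{f^{\ast}\geq a\}}$ (verified by matching distribution functions, using that equimeasurability for strict level sets passes to weak level sets by monotone approximation) combined with the classical Hardy--Littlewood inequality; for (ii), P\'olya--Szeg\"o applied to $(u-t)_{+}$ and $\min(u,t)$, together with the commutation of Schwarz symmetrization with nondecreasing maps fixing $0$ and the fact that $\nabla u=0$ a.e.\ on $\{u=t\}$ so that strict and weak level sets give the same gradient integrals. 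One small remark: the proposition as stated does not explicitly assume $f,g\geq 0$, so you should make that reduction explicit at the outset (it is the only sensible reading, since Schwarz symmetrization is defined in the paper for nonnegative functions and all applications in Section 6 have $f,g\geq 0$); this does not affect the correctness of your argument.
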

\smallskip

$-$ We say that a sequence of sets $\{A_i\}\subset \re^2$ are approximately small disks at $x\in\re^2$ (of radius $\tau_i$) as $i\to\infty$ if there exists  sequences $\tau_i,\sigma_i>0$ such that $\lim_{i\to\infty}\tau_i=0,$
$$
  \lim_{i\to\infty}\frac{\sigma_i}{\tau_i}=0
$$
and 
$$
  B_{\tau_i-\sigma_i}(x)\subset A_i\subset B_{\tau_i+\sigma_i}(x)\quad\text{ for all $i$ big enough.}
$$
\smallskip

$-$ If $x\in\Omega,$ then $G_{\Omega,x}$ will denote the Green's function of $\Omega$ with singularity at $x.$ It can always be decomposed in the form
$$
  G_{\Omega,x}(y)=-\frac{1}{2 \pi}\log(|x-y|)-H_{\Omega,x}(y),\qquad y\in\Omega\backslash\{x\},
$$
where $H$ is the regular part. The conformal incenter $I_{\Omega}(x)$ of $\Omega$ at $x$ is defined by
$$
  I_{\Omega}(x)=e^{-2\pi H_{\Omega,x}(x)}.
$$
We refer to Flucher \cite{Flucher} concerning properties and examples regarding the conformal incenter, cf. \cite{Flucher} Lemma 10 and Proposition 12 (see also \cite{Csato} Lemma 12). We will need in particular the following results:

\begin{proposition}
\label{proposition:properties of Green's function} Let $x\in\Omega.$ Then
$G_{\Omega,x}$ and $I_{\Omega}(x)$ have the following properties:

(a) For every $t\in[0,\infty)$
$$
  \int_{\{G_{\Omega,x}<t\}}\left|\nabla G_{\Omega,x}(y)\right|^2 dy=t.
$$

(b) For every $t\in [0,\infty)$
$$
  \int_{\{G_{\Omega,x}=t\}}\left|\nabla G_{\Omega,x}(y)\right| d\sigma=1.
$$

(c)
$$
  \lim_{t\to\infty}\frac{\left|\left\{G_{\Omega,x}>t\right\}\right|}{e^{-4\pi t}} =\pi \left(I_{\Omega}(x)\right)^2.
$$

(d) If $B_R=\Omega^{\ast}$ is the symmetrized domain, then 
$$
  I_{\Omega}(x)\leq I_{B_R}(0)=R.
$$

(e) If $t_i\geq 0$ is a given sequence such that $t_i\to\infty$, then the sets $\{G_{\Omega,x}>t_i\}$ are approximately small disks at $x$ of radius $\tau_i=I_{\Omega}(x)e^{-2\pi t_i}.$

\end{proposition}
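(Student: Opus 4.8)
The plan is to prove (a)--(e) using only the distributional identity $-\Delta G_{\Omega,x}=\delta_x$ together with the decomposition $G_{\Omega,x}(y)=-\tfrac1{2\pi}\log|x-y|-H_{\Omega,x}(y)$, where $H_{\Omega,x}$ is harmonic in $\Omega$ and smooth near $x$. Write $G=G_{\Omega,x}$, $H=H_{\Omega,x}$, $\mu(t)=|\{G>t\}|$ and $P(t)=\sigma(\{G=t\})$. I would begin with (b): on a regular level curve $\{G=t\}$ the gradient $\nabla G$ is normal and points towards $x$, so $|\nabla G|=-\partial_\nu G$ with $\nu$ the outward normal of $\{G>t\}$. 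Applying the divergence theorem to the harmonic function $G$ on $\{G>t\}\setminus\overline{B_\epsilon(x)}$, the flux of $\nabla G$ through $\{G=t\}$ cancels the flux through $\partial B_\epsilon(x)$, and the latter tends to $1$ as $\epsilon\to0$ because $\nabla G(y)\approx-\tfrac1{2\pi}\tfrac{y-x}{|y-x|^2}$ near $x$; hence $\int_{\{G=t\}}|\nabla G|\,d\sigma=1$. The case $t=0$ (where $\{G=0\}=\delomega$) is the identical computation on all of $\Omega\setminus\overline{B_\epsilon(x)}$. Then (a) is immediate from (b) and the coarea formula, $\int_{\{G<t\}}|\nabla G|^2=\int_0^t\big(\int_{\{G=s\}}|\nabla G|\,d\sigma\big)ds=t$; alternatively (a) can be obtained by testing $-\Delta G=\delta_x$ against the bounded admissible function $\min(G,t)\in W^{1,2}_0(\Omega)$, which is constant in a neighborhood of $x$.

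For (c) and (e) I would use that $\{G>t\}$ shrinks to $\{x\}$ as $t\to\infty$, so for $t$ large it lies in a small neighborhood of $x$ on which the inequality $G(y)>t$ is equivalent to $|y-x|<e^{-2\pi(t+H(y))}$. Put $\tau=\tau(t)=I_\Omega(x)e^{-2\pi t}=e^{-2\pi(t+H(x))}$ and let $\omega(t)$ be the oscillation of $H$ on $\{G>t\}$; a short bootstrap using the Lipschitz continuity of $H$ near $x$ shows $\{G>t\}$ has radius $\le 2\tau(t)$ and $\omega(t)\le C\tau(t)\to0$. One then gets the two‑sided inclusion $B_{\tau e^{-2\pi\omega}}(x)\subseteq\{G>t\}\subseteq B_{\tau e^{2\pi\omega}}(x)$. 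This is precisely statement (e) with $\sigma_i=\tau_i(e^{2\pi\omega(t_i)}-1)$, since then $\sigma_i/\tau_i\to0$; and taking areas and letting $t\to\infty$ gives $\mu(t)/e^{-4\pi t}=\pi I_\Omega(x)^2\cdot\mu(t)/(\pi\tau^2)\to\pi I_\Omega(x)^2$, which is (c).

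Finally (d). The identity $I_{B_R}(0)=R$ is a direct computation from $G_{B_R,0}(y)=-\tfrac1{2\pi}\log(|y|/R)$, so that $H_{B_R,0}\equiv-\tfrac1{2\pi}\log R$. For the inequality $I_\Omega(x)\le R$ I would combine the coarea formula, (b), Cauchy--Schwarz, and the classical planar isoperimetric inequality: for a.e. $t$, $P(t)^2\le\big(\int_{\{G=t\}}|\nabla G|\,d\sigma\big)\big(\int_{\{G=t\}}|\nabla G|^{-1}\,d\sigma\big)=-\mu'(t)$ and $P(t)^2\ge4\pi\,\mu(t)$, whence $(\log\mu)'\le-4\pi$ and therefore $\mu(t)\le\mu(0)e^{-4\pi t}=\pi R^2e^{-4\pi t}$. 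Dividing by $e^{-4\pi t}$, letting $t\to\infty$, and invoking (c) yields $\pi I_\Omega(x)^2\le\pi R^2$.

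The genuinely substantive step is (d), where the interaction of Cauchy--Schwarz with the isoperimetric inequality produces the sharp decay constant $4\pi$; everything else reduces to bookkeeping around the logarithmic singularity. The recurring technical nuisance in (a), (b), (d) is to justify the level‑set manipulations for \emph{every} $t$ rather than merely almost every $t$: since $G$ is real‑analytic away from $x$ its critical points are isolated, so each $\{G=t\}$ is a finite union of analytic arcs with finitely many singular points and the divergence/coarea arguments persist, while monotonicity in $t$ upgrades any a.e.\ conclusion to all $t$. In the actual write‑up these facts would largely be quoted from Flucher \cite{Flucher} (Lemma~10, Proposition~12) and Csat\'o \cite{Csato} (Lemma~12), with the computations above supplied where a reference alone does not suffice.
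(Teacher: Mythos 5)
The paper supplies no proof of this proposition: it is stated as a collection of known properties of the Green's function and the conformal incenter, with the justification deferred to Flucher (\cite{Flucher}, Lemma~10 and Proposition~12) and Csat\'o (\cite{Csato}, Lemma~12). So there is no in-paper argument to compare against, but your reconstruction is correct and follows essentially the standard route from those sources. Parts (a) and (b) are the usual Green's-function flux identities; your alternative route for (a), testing $-\Delta G=\delta_x$ against $\min(G,t)\in W^{1,2}_0(\Omega)$ and using $\nabla\min(G,t)=\chi_{\{G<t\}}\nabla G$, is the cleanest way to get all $t$ at once. Parts (c) and (e) are the local comparison with the logarithmic singularity after absorbing the oscillation of $H_{\Omega,x}$ on the shrinking super-level sets, exactly as you sketch. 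Part (d) is the only substantive step and your argument is the right one: chain (b) with Cauchy--Schwarz on level curves and the planar isoperimetric inequality to obtain $-\mu'(t)\ge P(t)^2\ge 4\pi\mu(t)$, hence $\mu(t)\le \pi R^2 e^{-4\pi t}$, then let $t\to\infty$ and use (c). One small point worth spelling out in a full write-up is that $\mu(t)=|\{G>t\}|$ is in fact absolutely continuous (from the coarea formula and the fact that $\{\nabla G=0\}$ has Lebesgue measure zero, by real-analyticity of $G$ away from $x$), so the pointwise differential inequality for $\mu$ does integrate to the claimed exponential bound; your appeal to analyticity and Sard's theorem covers this, as does the monotonicity upgrade you mention for the level-set manipulations in (a) and (b). No gaps of substance.
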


\section{Some Preliminary Results}\label{section:prelim. results}

We first note that it is sufficient to work with non-negative smooth maximizing sequences. More precisely we have the following lemma, which we will use in Section \ref{section:domain to ball} in a crucial way.

\begin{lemma}
\label{lemma new:sup over W12 same as over Cinfty}
Let $\{u_i\}\subset\mathcal{B}_1(\Omega)$ be a sequence such that the limit $\lim_{i\to\infty}F_{\Omega}(u_i)$ exists. Then there exists a sequence $\{w_i\}\subset \mathcal{B}_1(\Omega)\cap C_c^{\infty}(\Omega)$ such that 
$$
  \liminf_{i\to\infty}F_{\Omega}(w_i)\geq \lim_{i\to\infty}F_{\Omega}(u_i).
$$
Moreover, if $u_i$ concentrates at $x_0\in\Omegabar,$ then also $w_i$ concentrates at $x_0$. In particular maximizing sequences for $F_{\Omega}^{\sup}$ and $F_{\Omega}^{\delta}(x_0)$ can always be assumed to be smooth and non-negative.
\end{lemma}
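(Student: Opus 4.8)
The plan is to reduce from a general sequence $\{u_i\}\subset\mathcal{B}_1(\Omega)$ to a smooth, non-negative, compactly supported one in three successive approximation steps, controlling both the value of $F_\Omega$ and the concentration behaviour at each step. First I would replace $u_i$ by $|u_i|$; since $F_\Omega$ depends on $u_i$ only through $u_i^2$ and $\|\nabla |u_i|\|_{L^2}=\|\nabla u_i\|_{L^2}$, this changes neither the functional value nor membership in $\mathcal{B}_1(\Omega)$, and it preserves the concentration property because $|\nabla|u_i||=|\nabla u_i|$ a.e. So from now on $u_i\geq 0$. Second, I would handle the compact support: for each fixed $i$, truncate and mollify. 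Concretely, for $u_i\in W^{1,2}_0(\Omega)$ with $u_i\geq0$, one can choose (by density of $C_c^\infty(\Omega)$ in $W^{1,2}_0(\Omega)$, together with the continuity of $v\mapsto F_\Omega(v)$ on bounded-gradient sequences that do \emph{not} concentrate — but here we only need continuity along a fixed approximating sequence for a \emph{fixed} $u_i$, which follows from Moser--Trudinger-type equi-integrability of $e^{\alpha v^2}/|x|^\beta$ on $\{\|\nabla v\|_{L^2}\le 1+\epsilon\}$, a standard consequence of the Adimurthi--Sandeep inequality) a function $\tilde w_i\in C_c^\infty(\Omega)$, which we may take non-negative by composing with a smooth truncation, such that $\|\nabla \tilde w_i\|_{L^2}\le 1$ and $|F_\Omega(\tilde w_i)-F_\Omega(u_i)|<1/i$. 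Then $\liminf_i F_\Omega(\tilde w_i)\ge \lim_i F_\Omega(u_i)$.

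The delicate point is preserving concentration at $x_0$ under this per-$i$ approximation. The cleanest fix is to choose the approximation parameter depending on $i$ so that the error is small in the $W^{1,2}$ norm as well: pick $\tilde w_i$ with additionally $\|u_i-\tilde w_i\|_{W^{1,2}(\Omega)}<1/i$. Then $\|\nabla \tilde w_i\|_{L^2}\to 1$ (after possibly renormalizing by dividing by $\max\{1,\|\nabla\tilde w_i\|_{L^2}\}$, which only improves $F_\Omega$ is false — renormalization by a factor $\le 1$ can only decrease, so instead I take $\tilde w_i$ with $\|\nabla\tilde w_i\|_{L^2}\le 1$ from the start and note $\|\nabla\tilde w_i\|_{L^2}\to 1$ follows from $\|\nabla u_i\|_{L^2}\to 1$ and the norm estimate). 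For the localized-gradient condition, fix $\epsilon>0$: then
$$
  \int_{\Omega\setminus B_\epsilon(x_0)}|\nabla\tilde w_i|^2
  \le 2\int_{\Omega\setminus B_\epsilon(x_0)}|\nabla u_i|^2 + 2\|\nabla(u_i-\tilde w_i)\|_{L^2}^2
  \le 2\int_{\Omega\setminus B_\epsilon(x_0)}|\nabla u_i|^2 + 2/i^2 \to 0,
$$
so $\tilde w_i$ concentrates at $x_0$ as well. Setting $w_i=\tilde w_i$ completes the construction.

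I expect the main obstacle to be the justification, in the second step, that for a \emph{fixed} $u_i$ one can find a smooth compactly supported approximation that is simultaneously close in $W^{1,2}$ and close in $F_\Omega$-value, and non-negative. Closeness in $W^{1,2}$ is just density of $C_c^\infty(\Omega)$; non-negativity is arranged by replacing a generic approximant $\varphi$ by $\eta_\kappa\circ\varphi$ where $\eta_\kappa(s)=\sqrt{s^2+\kappa^2}-\kappa$ composed with a cutoff, or more simply by $\max(\varphi,0)$ mollified; and closeness in $F_\Omega$-value follows because $W^{1,2}$-convergence of a sequence with uniformly bounded gradient norm (bounded by, say, $1+1/i$) forces $L^2$-convergence of a subsequence, hence a.e. convergence, and then the singular Moser--Trudinger inequality gives uniform integrability of $\{(e^{\alpha\varphi^2}-1)/|x|^\beta\}$ (apply Adimurthi--Sandeep with exponent $\alpha(1+1/i)^2 < \alpha$-admissible range for $i$ large, or use that any single $u_i$ has $\|\nabla u_i\|_{L^2}\le 1$ strictly subcritical perturbations), allowing passage to the limit by Vitali's theorem. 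Once these pointwise-in-$i$ facts are in hand, the diagonal/limit argument in the previous paragraph is routine. The final sentence of the lemma — that maximizing sequences for $F_\Omega^{\sup}$ and for $F_\Omega^\delta(x_0)$ may be taken smooth and non-negative — is then immediate: apply the construction to any such maximizing sequence and use $\liminf_i F_\Omega(w_i)\ge\lim_i F_\Omega(u_i)$ together with $w_i\in\mathcal{B}_1(\Omega)\cap C_c^\infty(\Omega)$ and (for the $\delta$-case) the preservation of concentration at $x_0$.
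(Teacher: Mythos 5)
Your overall plan — per-$i$ approximation of $u_i$ by a compactly supported smooth function close in $W^{1,2}$, followed by a diagonal choice $w_i=\tilde w_{k(i)}$ that simultaneously controls the functional value and the $W^{1,2}$ error, with the $W^{1,2}$ control then propagating concentration — coincides structurally with the paper's proof. But the step where you justify closeness of the functional value contains a genuine gap. You assert that $W^{1,2}$-convergence plus a uniform gradient bound $\le 1+1/i$ gives, via the Adimurthi--Sandeep inequality, equi-integrability of $(e^{\alpha\varphi^2}-1)/|x|^\beta$, and then pass to the limit by Vitali. This fails precisely at the critical exponent $\alpha/(4\pi)+\beta/2=1$, which is the case of interest: when $\|\nabla u_i\|_{L^2}=1$, there is no ``strictly subcritical perturbation'' to exploit, and uniform integrability of the critical Moser--Trudinger density is exactly what does \emph{not} hold in general (otherwise the concentration phenomenon addressed by this whole paper would not occur). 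Your parenthetical fallback, applying Adimurthi--Sandeep with exponent $\alpha(1+1/i)^2$, only makes the exponent larger, not admissible.

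The fix is also why the paper's proof is simpler than yours: you only need the one-sided inequality $F_\Omega(u_i)\le F_\Omega(v_i^k)+2^{-i}$ for $k$ large, and that direction comes for free from Fatou's lemma applied to the a.e.-convergent, non-negative integrands — no equi-integrability, no Vitali. The paper chooses approximants $v_i^k\in C_c^\infty(\Omega)$ with $v_i^k\to u_i$ a.e.\ and in $W^{1,2}$, normalized so that $\|\nabla v_i^k\|_{L^2}=\|\nabla u_i\|_{L^2}$ (hence still in $\mathcal{B}_1(\Omega)$), picks $k_1(i)$ from Fatou and $k_2(i)$ from the $W^{1,2}$-convergence, and sets $w_i=v_i^{\max\{k_1(i),k_2(i)\}}$. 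Your concentration-preservation computation and the final deduction for maximizing sequences are fine; replace the equi-integrability/Vitali step by the Fatou argument and the proof is correct.
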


\begin{proof}
For each $i\in\mathbb{N}$ there exists $\{v_i^k\}\subset C_c^{\infty}(\Omega)$ such that $v_i^k\to u_i$ almost everywhere in $\Omega,$
$$
  v_i^k\to u_i\text{ in }W^{1,2}(\Omega)\text{ for }k\to\infty\quad \text{ and }\quad \|\nabla v_i^k\|_{L^2}=\|\nabla u_i\|_{L^2}\text{ for all }k.
$$
Using Fatou's lemma there exists $k_1(i)\in\mathbb{N}$ such that
$$
  F_{\Omega}(u_i)\leq F_{\Omega}(v_i^j)+\frac{1}{2^i}\quad\forall\, j\geq k_1(i).
$$
Moreover, from the convergence in $W^{1,2}(\Omega)$ we also obtan the existence of $k_2(i)\in\mathbb{N},$ such that
$$
  \|\nabla v_i^j-\nabla u_i\|_{L^2(\Omega)}\leq \frac{1}{2^i}\quad\forall\, j\geq k_2(i).
$$
We now define $w_i=v_i^{k(i)},$ where $k(i)=\max\{k_1(i),k_2(i)\}.$ It can be easily verified that $w_i$ has all the desired properties. 
\end{proof}

\smallskip

\begin{lemma}[compactness in interior]\label{lemma:compactness in the interior}
Let $0<\eta<1$ and suppose $\{u_i\}\subset W_0^{1,2}(\Omega)$ is such that
$$
  \limsup_{i\to\infty}\|\nabla u_i\|_{L^2}\leq \eta\quad\text{and}\quad 
  u_i\rightharpoonup u\text{ in }W^{1,2}(\Omega)
$$
for some $u\in W^{1,2}(\Omega).$ Then for some subsequence
$$
  \frac{e^{\alpha u_i^2}}{|x|^{\beta}}\to \frac{e^{\alpha u^2}}{|x|^{\beta}}\quad\text{ in }L^1(\Omega)
$$
and in particular $\lim_{i\to\infty}F_{\Omega}(u_i)=F_{\Omega}(u).$
\end{lemma}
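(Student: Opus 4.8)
The plan is to show that the family $\{ e^{\alpha u_i^2}/|x|^\beta \}_i$ is uniformly integrable on $\Omega$ and that, along a subsequence, it converges almost everywhere to $e^{\alpha u^2}/|x|^\beta$, and then to invoke Vitali's convergence theorem. The almost everywhere convergence is free: since $\Omega$ is bounded, the Rellich--Kondrachov theorem gives $u_i\to u$ in $L^2(\Omega)$ along a subsequence, hence $u_i\to u$ almost everywhere along a further subsequence, and then $e^{\alpha u_i^2}/|x|^\beta\to e^{\alpha u^2}/|x|^\beta$ a.e. on $\Omega$. So the only real content is the uniform integrability, and this is exactly where the strict inequality $\eta<1$ (together with $\alpha>0$) enters.

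First I would discard finitely many terms so that $\|\nabla u_i\|_{L^2}\le\eta'$ for all $i$, where $\eta'\in(\eta,1)$ is fixed; note also $\|\nabla u\|_{L^2}\le\eta\le\eta'$ by weak lower semicontinuity of the norm. Since $\alpha>0$ and $\eta'<1$,
$$
  \eta'^2\frac{\alpha}{4\pi}+\frac{\beta}{2}<\frac{\alpha}{4\pi}+\frac{\beta}{2}\le 1,
$$
so one may choose $q>1$ with $q\bigl(\eta'^2\tfrac{\alpha}{4\pi}+\tfrac{\beta}{2}\bigr)\le 1$. Set $\tilde\alpha=q\eta'^2\alpha>0$ and $\tilde\beta=q\beta$; then $\tfrac{\tilde\alpha}{4\pi}+\tfrac{\tilde\beta}{2}\le1$, and since $\tilde\alpha>0$ this forces $\tilde\beta<2$, so $\int_\Omega|x|^{-\tilde\beta}<\infty$. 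For $u_i\neq 0$ put $v_i=u_i/\|\nabla u_i\|_{L^2}$, so that $\|\nabla v_i\|_{L^2}=1$ and $q\alpha u_i^2=q\alpha\|\nabla u_i\|_{L^2}^2\,v_i^2\le\tilde\alpha v_i^2$ pointwise. Using that $t\mapsto e^t-1$ is nonnegative and nondecreasing, together with the singular Moser--Trudinger inequality of Adimurthi--Sandeep \cite{Adi-Sandeep} applied to $v_i$ with parameters $\tilde\alpha,\tilde\beta$,
$$
  \int_\Omega\frac{e^{q\alpha u_i^2}}{|x|^{q\beta}}
  =\int_\Omega\frac{e^{q\alpha u_i^2}-1}{|x|^{\tilde\beta}}+\int_\Omega\frac{dx}{|x|^{\tilde\beta}}
  \le\int_\Omega\frac{e^{\tilde\alpha v_i^2}-1}{|x|^{\tilde\beta}}+\int_\Omega\frac{dx}{|x|^{\tilde\beta}}\le C_0,
$$
with $C_0$ independent of $i$; the bound $\int_\Omega e^{q\alpha u_i^2}|x|^{-q\beta}\le C_0$ holds trivially when $u_i=0$, and by the same reasoning it holds with $u$ in place of $u_i$.

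Now for any measurable $E\subset\Omega$, Hölder's inequality with exponents $q$ and $q'=q/(q-1)$ gives
$$
  \int_E\frac{e^{\alpha u_i^2}}{|x|^\beta}\,dx\le\Bigl(\int_E\frac{e^{q\alpha u_i^2}}{|x|^{q\beta}}\,dx\Bigr)^{1/q}|E|^{1/q'}\le C_0^{1/q}\,|E|^{1/q'},
$$
which shows that $\{e^{\alpha u_i^2}/|x|^\beta\}_i$ is uniformly integrable on the finite-measure set $\Omega$ (and similarly $e^{\alpha u^2}/|x|^\beta\in L^1(\Omega)$). Combined with the a.e. convergence obtained above, Vitali's convergence theorem yields $e^{\alpha u_i^2}/|x|^\beta\to e^{\alpha u^2}/|x|^\beta$ in $L^1(\Omega)$ along that subsequence. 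Since $\beta<2$ we have $\int_\Omega|x|^{-\beta}<\infty$, so subtracting this constant gives $F_\Omega(u_i)\to F_\Omega(u)$ along the subsequence; and since the limit $F_\Omega(u)$ does not depend on the chosen subsequence, every subsequence of $\{F_\Omega(u_i)\}$ has a further subsequence converging to $F_\Omega(u)$, hence $\lim_{i\to\infty}F_\Omega(u_i)=F_\Omega(u)$.

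The main (and essentially the only) obstacle is the uniform integrability step. The key observation is that, because $\eta<1$ strictly and $\alpha>0$, there is slack in the admissibility condition $\tfrac{\alpha}{4\pi}+\tfrac{\beta}{2}\le1$, so one can feed a \emph{supercritical} exponent $q\alpha>\alpha$ (with the weight exponent scaled up to $q\beta$) into the singular Moser--Trudinger inequality and still remain in the admissible range; this produces the uniform higher integrability that forces equi-integrability. The remaining ingredients (Rellich compactness, Vitali's theorem, and the subsequence principle to upgrade from subsequential to full convergence of $F_\Omega(u_i)$) are routine.
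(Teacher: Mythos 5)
Your proof is correct, and the overall strategy (a.e.\ convergence along a subsequence plus equi-integrability via H\"older, then Vitali) matches the paper's. The difference is in the H\"older split and in which Moser--Trudinger inequality is invoked. You apply H\"older with exponents $q,q'$ to the product $\bigl(e^{\alpha u_i^2}/|x|^\beta\bigr)\cdot 1$, producing the supercritical pair $(q\alpha\|\nabla u_i\|_{L^2}^2,\,q\beta)$ that you control through the \emph{singular} Moser--Trudinger inequality of Adimurthi--Sandeep, applied to the normalized $v_i=u_i/\|\nabla u_i\|_{L^2}$. The paper instead fixes $\theta=(1+\eta)/2$, sets $v_i=u_i/\theta$ and $\overline\alpha=\alpha\theta^2$, writes $e^{\alpha u_i^2}=e^{\overline\alpha v_i^2}$, and H\"olders with exponents $r=4\pi/\overline\alpha$ and the conjugate $s$ so as to \emph{separate} the exponential factor from the singular weight: the first factor $\bigl(\int_E e^{4\pi v_i^2}\bigr)^{1/r}$ is then bounded by the \emph{classical} Moser--Trudinger inequality, while $\beta s<2$ makes $|x|^{-\beta s}$ integrable. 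Both versions spend the same slack created by $\eta<1$ in the constraint $\alpha/4\pi+\beta/2\le1$; yours is a bit more compact at the cost of citing the singular inequality as a black box, whereas the paper's variant needs only the non-singular Moser--Trudinger bound together with $|x|^{-\gamma}\in L^1(\Omega)$ for $\gamma<2$.
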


\begin{proof} The idea of the proof is to apply Vitali convergence theorem.
We can assume that, up to a subsequence, that $u_i\to u$ almost everywhere in $\Omega$ and that
$$
  \|\nabla u_i\|_{L^2}\leq\theta=\frac{1+\eta}{2}<1\quad\forall\,i\in\mathbb{N}.
$$
We can therefore define $v_i=u_i/\theta\in \WBOmega,$
which satisfies $\|\nabla v_i\|_{L^2}\leq 1$ for all $i.$ Moreover let us define $\overline{\alpha}=\alpha\theta^2<\alpha,$ such that
$$
  \frac{\overline{\alpha}}{4\pi}+\frac{\beta}{2}<1.
$$
Let $E\subset\Omega$ be an arbitrary measurable set. We use H\"older inequality with exponents $r$ and $s,$ where
$$
  r=\frac{4\pi}{\overline{\alpha}}>1\quad\text{ and }\quad \frac{1}{s}=1-\frac{1}{r}> \frac{\beta}{2},
$$
to obtain that
$$
  \int_E\frac{e^{\alpha u_i^2}}{|x|^{\beta}}=
  \int_E\frac{e^{\overline{\alpha}v_i^2}}{|x|^{\beta}} \leq
  \left(\int_{E}e^{4\pi v_i^2}\right)^{\frac{1}{r}} \left(\int_E\frac{1}{|x|^{\beta s}}\right)^{\frac{1}{s}}.
$$
Let $\epsilon>0$ be given.
In view of the Moser-Trudinger inequality and using that $1/|x|^{\beta s}\in L^1(\Omega),$ we obtain that for any $\epsilon>0$ there exists a $\delta>0$ such that
$$
  \int_E\frac{e^{\alpha u_i^2}}{|x|^{\beta}}\leq \epsilon\quad\forall\,|E|\leq \delta\text{ and }i\in\mathbb{N}.
$$
This shows that the sequence $e^{\alpha u_i^2}/|x|^{\beta}$ is equi-integrable and the Vitali convergence theorem yields convergence in $L^1(\Omega).$
This proves the lemma.
\end{proof}

\begin{theorem}[Concentration-Compactness Alternative]\label{theorem:concentration alternative for singular moser trudinger}
Let $\{u_i\}\subset \mathcal{B}_1(\Omega).$ Then there is a subsequence and $u\in W_0^{1,2}(\Omega)$ with $u_i\rightharpoonup u$ in $W^{1,2}(\Omega),$  such that either

(a) $\{u_i\}$ concentrates at a point $x\in\Omegabar,$
\newline or

(b) the following convergence holds true
$$
  \lim_{i\to\infty}F_{\Omega}(u_i)=F_{\Omega}(u).
$$
\end{theorem}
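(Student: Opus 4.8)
The plan is to prove this concentration-compactness alternative following the classical Lions-type argument, adapted to the singular functional $F_\Omega$ via the $L^1$-compactness result just established in Lemma \ref{lemma:compactness in the interior}. First, extract a subsequence so that $u_i\rightharpoonup u$ weakly in $W^{1,2}(\Omega)$ for some $u\in W^{1,2}_0(\Omega)$ (the limit lies in $W^{1,2}_0$ since this space is weakly closed), and so that $|\nabla u_i|^2\,dx\rightharpoonup \mu$ weakly-$*$ as measures for some nonnegative Radon measure $\mu$ on $\Omegabar$ with $\mu(\Omegabar)\le 1$. Weak lower semicontinuity of the Dirichlet integral gives $|\nabla u|^2\,dx\le\mu$ as measures, and after a further subsequence we may assume $u_i\to u$ a.e. in $\Omega$.

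The dichotomy comes from examining whether $\mu$ charges any single point with full mass $1$. Suppose first that $\mu(\{x\})<1$ for every $x\in\Omegabar$. The key point is to show that then, for every $x\in\Omegabar$, there is a small ball $B_\rho(x)$ on which $\limsup_i\|\nabla u_i\|_{L^2(B_\rho(x)\cap\Omega)}\le\eta$ for some $\eta<1$; indeed, if $\mu(\{x\})=:m<1$, pick $\eta\in(\sqrt m,1)$ and choose $\rho>0$ small enough that $\mu(\overline{B_\rho(x)})<\eta^2$ (possible since $\mu$ is a finite measure and $\bigcap_\rho\overline{B_\rho(x)}=\{x\}$), then use $\limsup_i\mu_i(\overline{B_\rho(x)})\le\mu(\overline{B_\rho(x)})$ for the closed set $\overline{B_\rho(x)}$. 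By compactness of $\Omegabar$, finitely many such balls $B_{\rho_1}(x_1),\dots,B_{\rho_N}(x_N)$ cover $\Omegabar$. On each $B_{\rho_j}(x_j)\cap\Omega$ we have $u_i\rightharpoonup u$ in $W^{1,2}$ and $\limsup_i\|\nabla u_i\|_{L^2}\le\eta<1$, so Lemma \ref{lemma:compactness in the interior} (applied with $\Omega$ replaced by $B_{\rho_j}(x_j)\cap\Omega$, which is admissible since the proof there only uses $1/|x|^{\beta s}\in L^1$ and the Moser–Trudinger inequality, both stable under passing to open subsets) yields $e^{\alpha u_i^2}/|x|^\beta\to e^{\alpha u^2}/|x|^\beta$ in $L^1(B_{\rho_j}(x_j)\cap\Omega)$ along a subsequence. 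A finite diagonal extraction over $j=1,\dots,N$ then gives $e^{\alpha u_i^2}/|x|^\beta\to e^{\alpha u^2}/|x|^\beta$ in $L^1(\Omega)$, hence $F_\Omega(u_i)\to F_\Omega(u)$, which is alternative (b).

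It remains to treat the case $\mu(\{x_0\})=1$ for some $x_0\in\Omegabar$. Since $\mu(\Omegabar)\le 1$, this forces $\mu=\delta_{x_0}$. Then for every $\epsilon>0$, choosing a closed set like $\Omegabar\setminus B_\epsilon(x_0)$ (or rather using that $B_\epsilon(x_0)$ is open and $\mu(B_\epsilon(x_0))=1$), one gets $\limsup_i\int_{\Omega\setminus B_\epsilon(x_0)}|\nabla u_i|^2\le \mu(\Omegabar\setminus B_\epsilon(x_0))=0$; combined with the fact that $\mu(\Omegabar)=1$ forces $\lim_i\|\nabla u_i\|_{L^2}^2=\mu(\Omegabar)=1$ (here one uses that mass cannot escape since $\mu_i(\Omegabar)=\|\nabla u_i\|_{L^2}^2\le 1$ and $\mu_i\rightharpoonup\mu$ with $\mu(\Omegabar)=1$, so in fact $\|\nabla u_i\|_{L^2}^2\to 1$), this is exactly the definition of $\{u_i\}$ concentrating at $x_0$, giving alternative (a). Finally, $u_i\rightharpoonup 0$ in this case by the standard property of concentrating sequences recalled in Section \ref{section:notation}, consistent with $u=0$; we simply record the weak limit $u$ as this function.

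The main obstacle I anticipate is the bookkeeping around the boundary: ensuring that the covering argument and the weak-$*$ convergence of measures are handled correctly on $\Omegabar$ rather than $\Omega$ (so that a concentration point $x_0\in\delomega$ is not missed), and verifying carefully that Lemma \ref{lemma:compactness in the interior} applies verbatim on the open subsets $B_{\rho_j}(x_j)\cap\Omega$ — in particular that the Moser–Trudinger inequality and the integrability $1/|x|^{\beta s}\in L^1$ used in its proof survive restriction. A secondary technical point is the finite diagonal extraction of subsequences across the cover, which is routine but must be done before concluding $L^1(\Omega)$ convergence.
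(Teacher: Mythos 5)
The paper's own proof of this theorem is a one-liner: it cites Flucher's abstract concentration-compactness result (Theorem~1 of \cite{Flucher}), with Lemma~\ref{lemma:compactness in the interior} serving as the verification that $F_\Omega$ satisfies Flucher's ``compactness in the interior'' hypothesis. You instead attempt a self-contained Lions-type covering proof, which is in spirit a reproof of Flucher's Theorem~1. The structure (extract $\mu$, dichotomize on whether $\mu$ has an atom of full mass, cover $\Omegabar$ by small balls where $\mu$ is subcritical) is the right one, and the atom case is handled correctly.

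There is, however, a genuine gap in the subcritical case, at the step where you ``apply Lemma~\ref{lemma:compactness in the interior} with $\Omega$ replaced by $B_{\rho_j}(x_j)\cap\Omega$'' and assert this is admissible because the proof ``only uses $1/|x|^{\beta s}\in L^1$ and the Moser--Trudinger inequality, both stable under passing to open subsets.'' This is not correct. Lemma~\ref{lemma:compactness in the interior} requires its sequence to lie in $W^{1,2}_0$ of the domain in question, because the Moser--Trudinger inequality with the sharp constant $4\pi$ is applied to $v_i=u_i/\theta\in W^{1,2}_0$ with $\|\nabla v_i\|_{L^2}\le 1$ on that domain. The restriction $u_i|_{B_{\rho_j}(x_j)\cap\Omega}$ does \emph{not} lie in $W^{1,2}_0(B_{\rho_j}(x_j)\cap\Omega)$ (it has nonzero trace on $\partial B_{\rho_j}(x_j)\cap\Omega$), so neither the local Moser--Trudinger inequality nor the rescaling trick applies there. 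One cannot fall back on the global Moser--Trudinger bound $\int_\Omega e^{4\pi u_i^2}\le C$ either: in the critical case $\alpha/(4\pi)+\beta/2=1$, the H\"older exponents in Lemma~\ref{lemma:compactness in the interior}'s proof are forced to their endpoint values, and the resulting weight $1/|x|^2$ fails to be in $L^1$ near $0$ — the whole point of the rescaling $v_i=u_i/\theta$ is to gain strict slack, and that slack comes from the \emph{local} smallness of the Dirichlet energy, which requires a local zero-trace space to exploit. The standard repair (which is what Flucher's proof of his Theorem~1 actually does) is to work with $w_i=\eta(u_i-u)$ for a cutoff $\eta$, using $u_i-u\to 0$ in $L^2$ to control the $|\nabla\eta|^2|u_i-u|^2$ term, so that $w_i\in W^{1,2}_0$ of a slightly larger ball with $\limsup\|\nabla w_i\|_{L^2}<1$; one then needs a further elementary estimate (e.g.\ $u_i^2\le(1+\epsilon)(u_i-u)^2+(1+1/\epsilon)u^2$ inside the exponential together with H\"older) to pass the equi-integrability from $e^{\alpha w_i^2}$ back to $e^{\alpha u_i^2}$. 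As written, your proof skips this and is therefore incomplete.
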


\begin{proof}
This is a direct appliction of Theorem 1 in Flucher \cite{Flucher}. Lemma \ref{lemma:compactness in the interior} shows precisely that the hypothesis of $F_{\Omega}$ being compact in the interior is satisfied.  
\end{proof}

\begin{proposition}
\label{proposition:if u_i concentrates somewhere else than zero}
Let $\beta>0,$ $\{u_i\}\subset  \mathcal{B}_1(\Omega)$ and suppose that $u_i$ concentrates at $x_0\in\Omegabar,$ where $x_0\neq 0.$ Then 
one has that, for some subsequence, $u_i\rightharpoonup 0$ in $W^{1,2}(\Omega)$  and
$$
  \lim_{i\to\infty}F_{\Omega}(u_i)=F_{\Omega}(0)=0.
$$
In particular $F_{\Omega}^{\delta}(x_0)=0.$
\end{proposition}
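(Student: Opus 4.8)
The plan is to exploit that $\beta>0$ forces \emph{strict} subcriticality of the exponent: from \eqref{intro:eq:alpha and beta sum} we get $\alpha\leq 4\pi(1-\beta/2)<4\pi$. Since $\{u_i\}$ concentrates at $x_0$, it satisfies $u_i\rightharpoonup 0$ in $W^{1,2}(\Omega)$, hence $u_i\to 0$ in $L^2(\Omega)$ by \eqref{eq:properties of concentrating sequences}, and after passing to a subsequence $u_i\to 0$ a.e. in $\Omega$. Fix $\epsilon>0$ with $2\epsilon<|x_0|$, so that $0\notin\overline{B_\epsilon(x_0)}$, and split
$$
  F_{\Omega}(u_i)=\int_{\Omega\cap B_\epsilon(x_0)}\frac{e^{\alpha u_i^2}-1}{|x|^{\beta}}\,dx+\int_{\Omega\setminus B_\epsilon(x_0)}\frac{e^{\alpha u_i^2}-1}{|x|^{\beta}}\,dx=:A_i+B_i.
$$
On $B_\epsilon(x_0)$ the weight is bounded by $(|x_0|-\epsilon)^{-\beta}$, so there subcriticality will do the work; on $\Omega_\epsilon:=\Omega\setminus\overline{B_\epsilon(x_0)}$ the gradients become negligible by concentration, which together with $L^2$-convergence will kill $B_i$.

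For $A_i$: since $\alpha<4\pi$, $u_i\rightharpoonup 0$ and $\|\nabla u_i\|_{L^2}\leq 1$, the improved Moser--Trudinger inequality of Lions \cite{Lions} gives, for any fixed $p\in(\alpha,4\pi)$, a uniform bound $\sup_i\int_{\Omega}e^{p u_i^2}<\infty$ (if $u_i$ happened to converge strongly to $0$ this is trivial anyway). Hence $e^{\alpha u_i^2}$ is bounded in $L^{p/\alpha}(\Omega)$ with $p/\alpha>1$, so it is equi-integrable; combined with $u_i\to 0$ a.e., the Vitali convergence theorem yields $\int_{\Omega}(e^{\alpha u_i^2}-1)\,dx\to 0$. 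Since $e^{\alpha u_i^2}-1\geq 0$,
$$
  0\leq A_i\leq (|x_0|-\epsilon)^{-\beta}\int_{\Omega}\bigl(e^{\alpha u_i^2}-1\bigr)\,dx\longrightarrow 0.
$$

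For $B_i$: by the definition of concentration at $x_0$ we have $\int_{\Omega_\epsilon}|\nabla u_i|^2\to 0$, and since also $u_i\to 0$ in $L^2(\Omega_\epsilon)$ we obtain $u_i\to 0$ \emph{strongly} in $W^{1,2}(\Omega_\epsilon)$. Because $\beta<2$ we may fix $q\in(1,2/\beta)$, so that $|x|^{-\beta}\in L^q(\Omega_\epsilon)$; with $q'$ the conjugate exponent, using $(e^t-1)^{q'}\leq e^{q't}-1$ for $t\geq 0$ and Hölder,
$$
  0\leq B_i\leq\bigl\||x|^{-\beta}\bigr\|_{L^q(\Omega_\epsilon)}\Bigl(\int_{\Omega_\epsilon}\bigl(e^{\alpha q' u_i^2}-1\bigr)\,dx\Bigr)^{1/q'}.
$$
Finally, expanding $e^{\alpha q' u_i^2}-1=\sum_{k\geq 1}\frac{(\alpha q')^k}{k!}u_i^{2k}$ and using the two--dimensional Sobolev estimate $\|v\|_{L^{2k}(\Omega_\epsilon)}\leq C\sqrt{k}\,\|v\|_{W^{1,2}(\Omega_\epsilon)}$ (valid on the bounded Lipschitz set $\Omega_\epsilon$, which a generic choice of $\epsilon$ ensures), once $\|u_i\|_{W^{1,2}(\Omega_\epsilon)}$ is small enough the series is dominated, uniformly in $i$, by a convergent geometric series; since each term tends to $0$, $\int_{\Omega_\epsilon}(e^{\alpha q' u_i^2}-1)\,dx\to 0$, hence $B_i\to 0$. (Alternatively, one invokes Moser--Trudinger on $\Omega_\epsilon$ with a fixed small exponent to bound $\sup_i\int_{\Omega_\epsilon}e^{2\alpha q' u_i^2}$ and then applies Vitali.)

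Combining the two pieces gives $F_{\Omega}(u_i)=A_i+B_i\to 0=F_{\Omega}(0)$, while $u_i\rightharpoonup 0$ was recorded above. To deduce $F_{\Omega}^{\delta}(x_0)=0$: given any $\{u_i\}\subset\mathcal{B}_1(\Omega)$ concentrating at $x_0$, pass to a subsequence realizing $\limsup_i F_{\Omega}(u_i)$; it still concentrates at $x_0$, so by what was just proved a further subsequence has $F_\Omega\to 0$, which forces that $\limsup$ to equal $0$ (recall $F_\Omega\geq 0$); taking the supremum over all such sequences gives $F_{\Omega}^{\delta}(x_0)=0$. The main obstacle is the control of $B_i$ when $0\in\Omega_\epsilon$ — i.e. taming the singular weight on the region away from the concentration point — which is exactly where $\beta<2$ and the strong $W^{1,2}(\Omega_\epsilon)$-convergence to $0$ enter, together with a minor care about the regularity of $\Omega_\epsilon$ near $\partial\Omega$. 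The conceptual point, which has no analogue when $\beta=0$, is that $\beta>0$ renders the nonlinearity genuinely subcritical, so that away from the singularity $0$ no loss of mass can occur along a maximizing sequence.
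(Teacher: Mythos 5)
Your proof is correct, but it reverses the decomposition used in the paper. The paper splits the integral into $B_\epsilon(0)$ (a neighbourhood of the \emph{singularity}) and its complement, with $\epsilon$ chosen so that $B_{2\epsilon}(0)$ misses the concentration point $x_0$. On the complement of $B_\epsilon(0)$ the weight is bounded and, as you also observe, $\alpha<4\pi$ plus $u_i\to 0$ a.e.\ and the Vitali theorem kill the integral. For the piece near $0$ the paper multiplies $u_i$ by a cut-off $\eta$ supported in $B_{2\epsilon}(0)$; because $u_i$ concentrates at $x_0$, the Dirichlet norm of $\eta u_i\in W^{1,2}_0(B_{2\epsilon}(0))$ drops below $1$, so Lemma~\ref{lemma:compactness in the interior} applies verbatim and there is nothing further to prove. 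Your decomposition is instead around the \emph{concentration point} $x_0$: the piece $A_i$ on $B_\epsilon(x_0)$ is handled the same way (bounded weight plus Vitali), but on $\Omega\setminus\overline{B_\epsilon(x_0)}$, which still contains the singularity, you cannot simply bound the weight. You therefore invoke the strong $W^{1,2}(\Omega\setminus\overline{B_\epsilon(x_0)})$ convergence $u_i\to 0$, a H\"older inequality using $|x|^{-\beta}\in L^q$ for $q<2/\beta$, and a power-series (or Moser--Trudinger) estimate exploiting the $\sqrt{k}$-growth of the Sobolev constants. This is correct, but it is also heavier: it requires the extension theorem and the Sobolev inequality for $W^{1,2}$ functions on the Lipschitz set $\Omega\setminus\overline{B_\epsilon(x_0)}$ (whence the need for a ``generic'' $\epsilon$ to avoid cusps at $\partial B_\epsilon(x_0)\cap\delomega$), precisely because, unlike $\eta u_i$ in the paper's argument, your restriction of $u_i$ does not lie in $W^{1,2}_0$ of the smaller domain, so Lemma~\ref{lemma:compactness in the interior} cannot be reused directly. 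In short: the paper's cut-off around the singularity buys back $W^{1,2}_0$ membership and lets one recycle the compactness lemma; your decomposition around $x_0$ is conceptually transparent (it makes the strong subcriticality away from $0$ and the vanishing energy away from $x_0$ play cleanly separated roles) at the price of a hands-on estimate and some extra care with boundary regularity.
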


\begin{remark}
\label{remark:F delta zero is not zero}
Note that $F_{\Omega}^{\delta}(0)>0.$ This follows by rescaling the Moser sequence (see Flucher \cite{Flucher} page 472) on a small ball around the origin and extending it by $0$ in $\Omega.$
\end{remark}

\begin{proof}
Since $x_0\neq 0,$ there exists an $\epsilon>0$ such that 
 $B_{\epsilon}(x_0)\cap B_{2\epsilon}(0)=\emptyset.$
We extend $u_i$ by $0$ in $\re^2\backslash \Omegabar$ and split the integral in the following way
\begin{align*}
  F_{\Omega}(u_i)
  =
  \int_{\Omega\backslash B_{\epsilon}(0)}\frac{e^{\alpha u_i^2}-1}{|x|^{\beta}} 
  +
  \int_{B_{\epsilon}(0)}\frac{e^{\alpha u_i^2}-1}{|x|^{\beta}}=A_i+B_i\,,
\end{align*}
$A_i$ can obviously be estimated by
$$
  A_i\leq\frac{1}{|\epsilon|^{\beta}}\intomega \left(e^{\alpha u_i^2}-1\right).
$$
Since $\beta>0,$ it follows that $\alpha<4\pi,$ and it follows easily from \eqref{eq:properties of concentrating sequences} and from Vitali convegence theorem (similarly as in the proof of Lemma \ref{lemma:compactness in the interior}) that  $\lim_{i\to\infty}A_i=0.$
We now show that $\lim_{i\to\infty}B_i=0.$
Choose $\eta\in C^{\infty}_c(\Omega)$ such that $\eta\geq 0$ and
$$
  \eta=1\quad\text{ in }B_{\epsilon}(0),\qquad \eta=0\quad\text{ in }\Omega\backslash B_{2\epsilon}(0).
$$
Define $w_i=\eta u_i\in W_0^{1,2}(B_{2\epsilon}(0))$ and note that
\begin{align*}
  \int_{B_{2\epsilon}(0)}|\nabla(\eta u_i)|^2\leq& 2\int_{B_{2\epsilon}(0)}\eta^2 |\nabla u_i|^2+2\int_{B_{2\epsilon}(0)}|\nabla \eta|^2 |u_i|^2 \smallskip
  \\
  \leq& 2\int_{\Omega\backslash B_{\epsilon}(x_0)}|\nabla u_i|^2 +C_{\eta}\int_{\Omega}|u_i|^2.
\end{align*}
Since $\{u_i\}$ concentrates at $0,$ we get that for some $i_0\in\mathbb{N}$
$$
  \int_{B_{2\epsilon}(0)}|\nabla(\eta u_i)|^2\leq \frac{1}{2}\quad\forall\,i\geq i_0\,.
$$
We can therfore apply Lemma \ref{lemma:compactness in the interior} to the sequence $\{\eta u_i\}$ and the domain $B_{2\epsilon}(0)$ to get that
\begin{align*}
  0\leq\lim_{i\to\infty}B_i\leq &
  \lim_{i\to\infty}\int_{B_{2\epsilon}(0)}\frac{e^{\alpha(\eta u_i)^2}-1}{|x|^{\beta}} =0,
\end{align*}
where we have used again \eqref{eq:properties of concentrating sequences}. 
\end{proof}
\smallskip

We first prove Theorem \ref{theorem:intro:Extremal for Singular Moser-Trudinger} for some simple cases, which is the content of the next proposition.

\begin{proposition}
\label{proposition:sup attained if 0 notin Omegabar}
There exists $u\in \mathcal{B}_1(\Omega)$ such that $F_{\Omega}(u)=F_{\Omega}^{\sup}$ in the following cases:
$$
\text{(i)}\quad0\notin\Omegabar\quad\text{ or }\quad
 \text{(ii)}\quad\frac{\alpha}{4\pi}+\frac{\beta}{2}<1.
$$
\end{proposition}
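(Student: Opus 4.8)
The plan is to combine the direct method with the Concentration--Compactness Alternative (Theorem~\ref{theorem:concentration alternative for singular moser trudinger}). I would start from a maximizing sequence $\{u_i\}\subset\mathcal{B}_1(\Omega)$ for $F_\Omega^{\sup}$, which by Lemma~\ref{lemma new:sup over W12 same as over Cinfty} may be taken smooth and non-negative, and pass to a subsequence so that Theorem~\ref{theorem:concentration alternative for singular moser trudinger} applies: either (a) $\{u_i\}$ concentrates at some $x\in\Omegabar$, or (b) $F_\Omega(u_i)\to F_\Omega(u)$ for the weak limit $u\in W_0^{1,2}(\Omega)$. In case (b) the proof is immediate, since weak lower semicontinuity of the Dirichlet norm gives $\|\nabla u\|_{L^2}\le\liminf_i\|\nabla u_i\|_{L^2}\le1$, so $u\in\mathcal{B}_1(\Omega)$ is a maximizer. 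I would also record that $F_\Omega^{\sup}>0$: for any $v_0\in C_c^\infty(\Omega)$ with $v_0\not\equiv 0$, the normalized function $v_0/\|\nabla v_0\|_{L^2}\in\mathcal{B}_1(\Omega)$ has strictly positive $F_\Omega$--value, because the integrand is nonnegative and strictly positive on $\{v_0\ne0\}$. Hence the whole task is to exclude alternative (a) in situations (i) and (ii).

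For (ii), where $\frac{\alpha}{4\pi}+\frac{\beta}{2}<1$ strictly, I expect the cleanest route is to show that $F_\Omega$ is sequentially weakly continuous on all of $\mathcal{B}_1(\Omega)$, which makes concentration harmless: if (a) occurred one would have $u\equiv0$ (a concentrating sequence converges weakly to $0$) and hence $F_\Omega(u_i)\to0<F_\Omega^{\sup}$, contradicting maximality. To prove the weak continuity I would pass to a further subsequence with $u_i\to u$ a.e., so that $(e^{\alpha u_i^2}-1)/|x|^{\beta}\to(e^{\alpha u^2}-1)/|x|^{\beta}$ a.e., and then apply Vitali's theorem. The equi-integrability estimate is the same H\"older computation as in Lemma~\ref{lemma:compactness in the interior}, but now the slack comes from the hypothesis rather than from a rescaling: for $E\subset\Omega$ measurable, with $r=4\pi/\alpha>1$ and conjugate exponent $s$,
$$
  \int_E\frac{e^{\alpha u_i^2}}{|x|^{\beta}}\le\Big(\int_E e^{4\pi u_i^2}\Big)^{1/r}\Big(\int_E|x|^{-\beta s}\Big)^{1/s}.
$$
Here the first factor is bounded uniformly in $i$ by the Moser--Trudinger inequality (since $\|\nabla u_i\|_{L^2}\le1$, so $\intomega e^{4\pi u_i^2}\le|\Omega|+C$), while $1/s=1-\alpha/(4\pi)>\beta/2$ forces $\beta s<2$, hence $|x|^{-\beta s}\in L^1(\Omega)$ and the second factor tends to $0$ as $|E|\to0$, uniformly in $i$.

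For (i), where $0\notin\Omegabar$, the essential observation is that $|x|^{-\beta}$ is then a bounded function on $\Omega$ and that $\alpha<4\pi$ whenever $\beta>0$ (by~\eqref{intro:eq:alpha and beta sum}). If $\beta>0$, any concentration point $x\in\Omegabar$ necessarily differs from $0$, so Proposition~\ref{proposition:if u_i concentrates somewhere else than zero} gives $F_\Omega(u_i)\to F_\Omega(0)=0<F_\Omega^{\sup}$, excluding (a); alternatively one can rerun the Vitali argument above using that the weight is bounded and choosing a H\"older exponent $p>1$ with $\alpha p\le4\pi$, the decay in $|E|$ now supplied by the factor $|E|^{1-1/p}$. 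If $\beta=0$ the hypothesis $0\notin\Omegabar$ is vacuous and the statement reduces either to case (ii) (when $\alpha<4\pi$) or to Flucher's original theorem~\cite{Flucher} (when $\alpha=4\pi$).

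I expect the only subtle point to be the exponent bookkeeping in case (ii): one must verify that conjugating $r=4\pi/\alpha$ yields $s$ with $\beta s<2$, which is exactly where the strict inequality $\frac{\alpha}{4\pi}+\frac{\beta}{2}<1$ enters and what fails in the borderline regime $\frac{\alpha}{4\pi}+\frac{\beta}{2}=1$ with $0\in\Omegabar$ --- precisely the case requiring the rest of the paper. Apart from that, everything is a routine application of Vitali's theorem and the direct method, with no genuine concentration analysis needed.
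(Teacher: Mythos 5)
Your proof is correct and takes essentially the same approach as the paper: in both cases, run the direct method with the concentration--compactness alternative (Theorem~\ref{theorem:concentration alternative for singular moser trudinger}) and exclude the concentration alternative by establishing equi-integrability of $e^{\alpha u_i^2}/|x|^{\beta}$ via H\"older and Vitali, using boundedness of the weight in case (i) and the spare exponent in case (ii). The only cosmetic difference is in Part~(ii), where the paper rescales $v_i=u_i/\gamma$ to land in the hypotheses of Lemma~\ref{lemma:compactness in the interior}, whereas you run that same H\"older/Vitali computation directly with $r=4\pi/\alpha$, the slack now supplied by the strict inequality $\alpha/(4\pi)+\beta/2<1$ rather than by $\|\nabla v_i\|_{L^2}<1$ --- the two are equivalent.
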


\begin{proof} Let $\{u_i\}\subset\WBOmega$ be a maximizing sequence, that is 
\begin{equation}\label{eq:proof thm sing mst in simply conn. max sequence}
  F_{\Omega}^{\sup}=\lim_{i\to\infty}F_{\Omega}(u_i).
\end{equation}
We can assume that $u_i\rightharpoonup u\in\WBOmega$ in $W^{1,2}(\Omega).$
\smallskip 

\textit{Part (i).} We can assume, in view of Flucher's result \cite{Flucher}, that $\beta>0,$ which implies that $\alpha<4\pi.$ Since $0\notin \Omegabar,$ there exists a constant $C_1$ such that $1/|x|^{\beta}\leq C_1$. Therefore we can proceed similarly as in the proof of Lemma \ref{lemma:compactness in the interior}, or Proposition \ref{proposition:if u_i concentrates somewhere else than zero}, by using H\"older inequality and Vitali convergence theorem
and obtain that
$$
  \lim_{i\to \infty}F_{\Omega}(u_i)=F_{\Omega}(u).
$$

\textit{Part (ii).} Let $\gamma>1$ be such that 
$$
  \gamma^2\frac{\alpha}{4\pi}=1-\frac{\beta}{2}.
$$
We set $\overline{\alpha}=\gamma^2\alpha$ which satisfies $\overline{\alpha}/(4\pi)+\beta/2=1.$
We define $v_i=u_i/\gamma,$ which satisfies $v_i\rightharpoonup v:=u/\gamma$ in $W^{1,2}(\Omega)$ and
$$
  \|\nabla v_i\|_{L^2}\leq \frac{1}{\gamma}<1\quad\forall\,i.
$$
We therefore get from Lemma \ref{lemma:compactness in the interior} that
$$
  \lim_{i\to\infty}\int_{\Omega}\frac{e^{\alpha u_i^2}}{|x|^{\beta}}=
  \lim_{i\to\infty}\int_{\Omega}\frac{e^{\overline{\alpha} v_i^2}}{|x|^{\beta}}
  =\int_{\Omega}\frac{e^{\overline{\alpha} v^2}}{|x|^{\beta}}
  =\int_{\Omega}\frac{e^{\alpha u^2}}{|x|^{\beta}},
$$
from which the statement of Part (ii) follows.
\end{proof}

\section{The Case $\Omega=B_1$.}\label{section ball}

In this section we deal with the case where $\Omega$ is the unit ball. The following lemma is essentailly due to Adimurthi-Sandeep \cite{Adi-Sandeep}.

\begin{lemma}
\label{lemma:properties of T_a}
Let $0<a<\infty,$ and $u$ be radial function on $B_1$. Define
$$
  T_a(u)(x)=\sqrt{a}u\left(|x|^{\frac{1}{a}}\right).
$$
Then $T_a$ satisfies that 
$$
  T_a:W_{0,rad}^{1,2}(B_1)\to W_{0,rad}^{1,2}(B_1).
$$ 
$T_a$ is invertible with $(T_a)^{-1}=T_{1/a}$ and it satisfies
$$
  \|\nabla(T_a(u))\|_{L^2}=\|\nabla u\|_{L^2}\quad \forall\,u\in W_{0,rad}^{1,2}(B_1).
$$
Moreover if $a=1-\frac{\beta}{2},$ then
\begin{equation}\label{lemma:eq: transformation Adi-Sandeep}
  F_{B_1}(u)=\frac{1}{a}J_{B_1}(T_a(u))+\frac{|B_1|}{a}-\int_{B_1}\frac{1}{|x|^\beta}\quad\forall\,u\in W_{0,rad}^{1,2}(B_1).
\end{equation}
\end{lemma}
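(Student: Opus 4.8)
The plan is to check every assertion first on the dense subspace $C^\infty_{c,rad}(B_1)\subset W^{1,2}_{0,rad}(B_1)$, where $T_a$ is unproblematic to handle directly, and then to pass to general radial Sobolev functions by density, exploiting that $T_a$ is linear and, as the computation will show, an isometry for the gradient norm (which is an equivalent norm on $W^{1,2}_0$). Throughout I identify a radial $u$ with the one-variable map $r\mapsto u(r)$ and write everything in polar coordinates: for radial $w$ one has $\|\nabla w\|_{L^2(B_1)}^2=2\pi\int_0^1|w'(r)|^2r\,dr$ and $\int_{B_1}g\,|x|^{-\beta}\,dx=2\pi\int_0^1 g(r)\,r^{1-\beta}\,dr$.

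For the norm identity I set $w=T_a(u)$, compute by the chain rule $w'(r)=a^{-1/2}u'(r^{1/a})r^{1/a-1}$, and substitute $s=r^{1/a}$ (so $r=s^a$, $dr=a\,s^{a-1}\,ds$) in $2\pi\int_0^1|w'(r)|^2r\,dr$; the powers of $s$ combine to give exactly $2\pi\int_0^1|u'(s)|^2s\,ds=\|\nabla u\|_{L^2}^2$. For $u\in C^\infty_{c,rad}(B_1)$ this already shows $T_a(u)\in W^{1,2}_{0,rad}(B_1)$: it is radial and bounded, its gradient lies in $L^2$ by the identity just proved, and it is supported away from $\partial B_1$ (if $\operatorname{supp} u\subset B_\rho$ with $\rho<1$, then $\operatorname{supp} T_a(u)\subset B_{\rho^a}$). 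Linearity of $T_a$ together with this isometry then extends it to a bounded linear map $W^{1,2}_{0,rad}(B_1)\to W^{1,2}_{0,rad}(B_1)$, and a routine subsequence argument identifies the extension with the pointwise formula $\sqrt a\,u(|x|^{1/a})$. Invertibility is a one-line composition: $T_{1/a}(T_au)(x)=\sqrt{1/a}\,(T_au)(|x|^{a})=\sqrt{1/a}\,\sqrt a\,u((|x|^{a})^{1/a})=u(x)$, and symmetrically $T_a\circ T_{1/a}=\mathrm{id}$, so $(T_a)^{-1}=T_{1/a}$.

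For the identity \eqref{lemma:eq: transformation Adi-Sandeep} I take $a=1-\beta/2$, put $v=T_a(u)$, and change variables by $r=\rho^{1/a}$ in $\int_{B_1}e^{\alpha u^2}|x|^{-\beta}\,dx=2\pi\int_0^1 e^{\alpha u(r)^2}r^{1-\beta}\,dr$. Since $dr=a^{-1}\rho^{1/a-1}\,d\rho$ and $\tfrac{1-\beta}{a}+\tfrac1a-1=\tfrac{2-\beta}{a}-1=1$ — this is exactly where $a=1-\beta/2$ is used — the weight collapses, $r^{1-\beta}\,dr=a^{-1}\rho\,d\rho$; and $u(r)=u(\rho^{1/a})=a^{-1/2}v(\rho)$ gives $\alpha u(r)^2=(\alpha/a)v(\rho)^2$, which in the borderline case $\alpha=4\pi a$ equals $4\pi v(\rho)^2$. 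Hence $\int_{B_1}e^{\alpha u^2}|x|^{-\beta}\,dx=\tfrac1a\int_{B_1}e^{4\pi v^2}\,dx=\tfrac1a\bigl(J_{B_1}(T_au)+|B_1|\bigr)$, and subtracting $\int_{B_1}|x|^{-\beta}\,dx$ from both sides yields \eqref{lemma:eq: transformation Adi-Sandeep}.

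I expect the only real obstacle to be making the density/change-of-variables step airtight: rather than changing variables inside a Sobolev integral for rough $u$, one verifies the formulas on $C^\infty_{c,rad}(B_1)$ and transfers them through the isometry, being careful that the abstract extension of $T_a$ coincides with the explicit pointwise formula. One should also keep in mind that the additive-constant form of \eqref{lemma:eq: transformation Adi-Sandeep} relies on equality in \eqref{intro:eq:alpha and beta sum}, i.e.\ $\alpha=4\pi(1-\beta/2)=4\pi a$ — precisely the case not already handled by Proposition \ref{proposition:sup attained if 0 notin Omegabar}.
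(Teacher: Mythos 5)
Your proposal follows essentially the same route as the paper: verify the Dirichlet-norm isometry for $u\in C^\infty_{c,rad}(B_1)$ by the change of variable $r=s^a$, extend $T_a$ by density using linearity and the isometry, check invertibility by composition, and obtain the functional identity by the inverse substitution. You also correctly identify that \eqref{lemma:eq: transformation Adi-Sandeep} uses $\alpha=4\pi a$, i.e.\ equality in \eqref{intro:eq:alpha and beta sum}, which is the only case not already disposed of by Proposition~\ref{proposition:sup attained if 0 notin Omegabar}.

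The one place where your argument is less careful than the paper's is in concluding that $T_a(u)\in W^{1,2}_{0,rad}(B_1)$ for smooth $u$. You write that $v=T_a(u)$ ``is radial and bounded, its gradient lies in $L^2$ by the identity just proved, and it is supported away from $\partial B_1$,'' but the gradient you computed is the \emph{classical} derivative on $B_1\setminus\{0\}$, and $v$ need not be $C^1$ at the origin. To conclude $v\in W^{1,2}(B_1)$ one must check that this classical gradient is also the distributional gradient on all of $B_1$, i.e.\ that the origin is a removable singularity. The paper does exactly this: it integrates by parts on $B_1\setminus B_\epsilon(0)$ and notes that the boundary term $\int_{\partial B_\epsilon}v\varphi\nu_i$ vanishes as $\epsilon\to 0$ because $v$ and $\varphi$ are bounded and $|\partial B_\epsilon|\to 0$. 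This is the standard fact that a point has zero $W^{1,2}$-capacity in $\mathbb{R}^2$, and it should be stated rather than absorbed into ``its gradient lies in $L^2$.'' Once that sentence is added, your proof coincides with the paper's.
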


\begin{proof}
\textit{Step 1.} Let us first show that $T_a$ satisfies
$$
  T_a\left(C_{c,rad}^{\infty}(B_1)\right)\subset W_{0,rad}^{1,2}(B_1).
$$
and 
\begin{equation}
\label{eq:proof of lemma properties of T_a grad transforms}
  \int_{B_1} |\nabla(T_a(u))|^2=\int_{B_1}|\nabla u|^2.
\end{equation}
Let $u\in C_{c,rad}^{\infty}({B_1})$ and $v=T_a(u).$ Since $u$ is bounded, so is also $v.$ In particular we immediately obtain that $v\in L^2({B_1}).$ Note that in general $v\notin C^1({B_1})$ (take for example $a<1$). However we will show that $v$ has weak derivatives. First note that $\nabla v$ is well defined on ${B_1}\backslash\{0\},$ since $v\in C^1({B_1}\backslash\{0\}).$ Moreover we have, by the change of variable $r=s^a$ that 
\begin{equation}\label{eq:proof lemma Ta:transformation r and s in nabla}
  \int_{B_1}|\nabla v|^2=  \frac{1}{a}\int_0^1 \left(u'\left(r^{\frac{1}{a}}\right)\right)^2r^{\frac{2}{a}-2}\,2\pi rdr 
  =\int_0^1 (u'(s))^2\,2\pi sds 
\end{equation}
This proves \eqref{eq:proof of lemma properties of T_a grad transforms}. Since $v\in C^1({B_1}\backslash\{0\}),$ we have that for any $\epsilon>0,$
$$
  \int_{{B_1}\backslash B_{\epsilon}(0)}v\frac{\partial\varphi}{\partial x_i}
  =
  -\int_{{B_1}\backslash B_{\epsilon}(0)}\frac{\partial v}{\partial x_i}\varphi 
  +
  \int_{\partial B_{\epsilon}(0)}v\varphi\nu_i\quad\forall\,\varphi \in C^{\infty}_{c}({B_1}),
$$
where $\nu=(\nu_1,\nu_2)$ is the unit outward normal on $\partial B_1$.
In view of \eqref{eq:proof of lemma properties of T_a grad transforms} we have that $\|\nabla v\|_{L^2}<\infty.$ Therefore, recalling also $\|v\|_{L^{\infty}}\leq\infty,$ we obtain by letting $\epsilon\to 0,$ that
$$
  \int_{{B_1}}v\frac{\partial\varphi}{\partial x_i}
  =
  -\int_{{B_1}}\frac{\partial v}{\partial x_i}\varphi \quad\forall\,\varphi \in C^{\infty}_{c}({B_1}).
$$
This shows that $v\in W^{1,2}_0({B_1}).$ We can therefore apply Pioncar\'e inequality to obtain a constant $C$ which satisfies
\begin{equation*}
  \|T_a(u)\|_{W^{1,2}}\leq C\|\nabla u\|_{L^2}\quad\forall\,u\in C^{\infty}_{c,rad}({B_1}).
\end{equation*}
We can now extend by a density argument $T_a$ to an operator defined on whole $W^{1,2}_{0,rad}(B_1).$

\smallskip
\textit{Step 2.} It remains to show \eqref{lemma:eq: transformation Adi-Sandeep}. Let again $v$ be given by $v=T_a(u).$ Recall that the assumptions on $a$ and $\beta$ imply that $a=\frac{\alpha}{4\pi}.$ Thus, by using the substitution $r=s^{1/a},$ we get
\begin{align*}
  F_{B_1}(u)+\int_{B_1}\frac{1}{|x|^{\beta}}=&\int_0^1\frac{e^{\alpha u(r)^2}}{r^{\beta}} 2\pi r dr=\frac{1}{a}\int_0^1e^{4\pi \big(\sqrt{a}u\big(s^\frac{1}{a}\big)\big)^2}ds  
  \smallskip \\
  =&
  \frac{1}{a}J_{B_1}(v)+\frac{1}{a}\int_{B_1} 1=\frac{1}{a}J_{B_1}(v)+\frac{|{B_1}|}{a}.
\end{align*}
This concludes the proof of the last statement.
\end{proof}
\smallskip

The following corollary follows easily from Lemma \ref{lemma:properties of T_a}.

\begin{corollary}
\label{corollary:supremums for rad on D for F and G are same}
Let $a=1-\beta/2.$ Then the following identities hold true
$$
  \sup_{u\in \WBDiskrad}F_{B_1}(u)=\frac{1}{a}\sup_{u\in \WBDiskrad}J_{B_1}(u)+\frac{|B_1|}{a}-\int_{B_1}\frac{1}{|x|^\beta},
$$
and
$$
  F_{B_1}^{\text{sup}}=\frac{1}{a}J_{B_1}^{\text{sup}}+\frac{|B_1|}{a}-\int_{B_1}\frac{1}{|x|^\beta}.
$$
\end{corollary}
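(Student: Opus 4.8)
The plan is to read off the first identity directly from the transformation formula \eqref{lemma:eq: transformation Adi-Sandeep} of Lemma \ref{lemma:properties of T_a}, and then to pass from the radial supremum to the full supremum by Schwarz symmetrization.

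For the first identity I would fix $a=1-\beta/2$ and note that, by Lemma \ref{lemma:properties of T_a}, $T_a$ maps $W^{1,2}_{0,rad}(B_1)$ into itself, preserves the $L^2$ norm of the gradient, and is invertible with inverse $T_{1/a}$, which also preserves the $L^2$ norm of the gradient. Hence $T_a$ restricts to a \emph{bijection} of $\WBDiskrad$ onto itself: if $u\in\WBDiskrad$ then $T_a(u)\in\WBDiskrad$, and conversely every $w\in\WBDiskrad$ equals $T_a(T_{1/a}(w))$ with $T_{1/a}(w)\in\WBDiskrad$. Consequently, letting $u$ run over $\WBDiskrad$ is the same as letting $w=T_a(u)$ run over $\WBDiskrad$, and applying \eqref{lemma:eq: transformation Adi-Sandeep} term by term turns $\sup_{\WBDiskrad}F_{B_1}$ into $\tfrac1a\sup_{\WBDiskrad}J_{B_1}$ plus the constant $\tfrac{|B_1|}{a}-\int_{B_1}|x|^{-\beta}$. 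That is precisely the first displayed equality.

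For the second identity it suffices to show $F_{B_1}^{\text{sup}}=\sup_{\WBDiskrad}F_{B_1}$ and $J_{B_1}^{\text{sup}}=\sup_{\WBDiskrad}J_{B_1}$, since plugging these into the first identity gives the second. In each case ``$\geq$'' is trivial because $\WBDiskrad\subset\mathcal{B}_1(B_1)$. For ``$\leq$'' I would use Schwarz symmetrization: for $u\in\mathcal{B}_1(B_1)$ the function $u^{\ast}$ lies in $W^{1,2}_{0,rad\searrow}(B_1)\subset W^{1,2}_{0,rad}(B_1)$ (note $B_1^{\ast}=B_1$), satisfies $\|\nabla u^{\ast}\|_{L^2}\le\|\nabla u\|_{L^2}\le1$, hence $u^{\ast}\in\WBDiskrad$; moreover $F_{B_1}(u)\le F_{B_1}(u^{\ast})$ and $J_{B_1}(u)\le J_{B_1}(u^{\ast})$, the latter by the same Hardy--Littlewood type rearrangement inequality recalled in Section \ref{section:notation}, applied to the increasing function $t\mapsto e^{4\pi t^2}-1$ which vanishes at $t=0$. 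Taking suprema over $\mathcal{B}_1(B_1)$ gives the two reductions.

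I do not expect a genuine obstacle here; the corollary is a bookkeeping consequence of Lemma \ref{lemma:properties of T_a}. The only points one must be careful not to skip are that $T_a$ maps the \emph{constrained} set $\WBDiskrad$ \emph{onto} (not merely into) itself — which is exactly why one needs the gradient-norm invariance of both $T_a$ and $T_{1/a}$ — and that the rearrangement inequality is being invoked for $J_{B_1}$ as well as for $F_{B_1}$.
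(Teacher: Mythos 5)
Your proof is correct and follows the same route as the paper's (very terse) proof: the first identity comes from Lemma \ref{lemma:properties of T_a} via the observation that $T_a$ is a norm-preserving bijection of $\WBDiskrad$ onto itself, and the second is reduced to the first via Schwarz symmetrization, which shows $F_{B_1}^{\sup}=\sup_{\WBDiskrad}F_{B_1}$ and $J_{B_1}^{\sup}=\sup_{\WBDiskrad}J_{B_1}$. Your write-up supplies details the paper leaves implicit (in particular that both $T_a$ and $T_{1/a}$ preserve the unit-ball constraint, which makes $T_a$ a bijection of the constrained set rather than merely a map into it), but there is no substantive difference.
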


\begin{proof}
The first equality follows directly from Lemma \ref{lemma:properties of T_a}. By Schwarz symmetrization, the two equalities of the corollary are  equivalent.
\end{proof}
\smallskip

One of the crucial ingredients of the proof is the following result of Carleson and Chang \cite{Carleson-Chang}. Essential is the strict inequality in the following theorem. The second equality is an immediate consequence of the properties of Schwarz symmetrization.

\begin{theorem}[Carleson-Chang]
\label{theorem:Carleson and Chang strict inequality}
The following strict inequality holds true
$$
  J_{{B_1},rad\searrow}^{\delta}(0)<J_{B_1,rad\searrow}^{\sup}=J^{\sup}_{B_1}\,.
$$
\end{theorem}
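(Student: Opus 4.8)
The plan is to prove the strict inequality $J^{\delta}_{B_1,rad\searrow}(0)<J^{\sup}_{B_1,rad\searrow}$ by combining the Carleson–Chang sharp constant for the Moser–Trudinger functional on the disk with the Carleson–Chang lower bound on the concentration level. Recall that Carleson and Chang proved that any sequence $\{u_i\}\subset W^{1,2}_{0,rad\searrow}(B_1)\cap\mathcal B_1(B_1)$ that concentrates at $0$ satisfies $\limsup_i J_{B_1}(u_i)\le \pi\bigl(1+e\bigr)\cdot|B_1|$ (after normalizing $|B_1|=\pi$, the familiar statement is $\limsup_i\int_{B_1}(e^{4\pi u_i^2}-1)\le \pi e$). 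Taking the supremum over all such sequences gives the quantitative bound $J^{\delta}_{B_1,rad\searrow}(0)\le \pi e$ (in the normalized form). On the other hand, it is elementary that $J^{\sup}_{B_1}>\pi e$: one exhibits a single admissible function $w\in W^{1,2}_0(B_1)$, $\|\nabla w\|_{L^2}\le 1$, with $J_{B_1}(w)>\pi e$. This is precisely the content of Carleson–Chang's explicit test function computation, and it is the step that makes the inequality strict. The equality $J^{\sup}_{B_1,rad\searrow}=J^{\sup}_{B_1}$ is just Schwarz symmetrization: for any $v\in\mathcal B_1(B_1)$ the rearrangement $v^{\ast}$ is radially decreasing, lies in $\mathcal B_1(B_1)$, and satisfies $J_{B_1}(v)\le J_{B_1}(v^{\ast})$.

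Concretely, I would proceed in three steps. \textbf{Step 1 (symmetrization).} Show $J^{\sup}_{B_1}=J^{\sup}_{B_1,rad\searrow}$: the inequality ``$\ge$'' is trivial since the radial class is a subset, and ``$\le$'' follows because for each competitor $v$ the function $v^{\ast}$ is an admissible radially decreasing competitor with at least as large a value of $J_{B_1}$. \textbf{Step 2 (upper bound on the concentration level).} Invoke the Carleson–Chang theorem \cite{Carleson-Chang}: if $\{u_i\}\subset W^{1,2}_{0,rad\searrow}(B_1)\cap\mathcal B_1(B_1)$ concentrates at the origin, then $\limsup_{i\to\infty}J_{B_1}(u_i)\le |B_1|(1+e)$ — in their normalization $\le \pi e$ after subtracting the constant; hence $J^{\delta}_{B_1,rad\searrow}(0)\le \pi e$. \textbf{Step 3 (strict lower bound on the supremum).} Recall Carleson–Chang's explicit construction of a radial test function $w$ (a truncated/corrected Moser function) with $\|\nabla w\|_{L^2}\le 1$ and $\int_{B_1}(e^{4\pi w^2}-1)>\pi e$; this gives $J^{\sup}_{B_1}\ge J_{B_1}(w)>\pi e\ge J^{\delta}_{B_1,rad\searrow}(0)$. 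Combining Steps 1–3 yields the claim.

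The main obstacle — and the genuinely hard analytic input — is Step 3, the strict inequality $J^{\sup}_{B_1}>\pi e$, equivalently the construction of an explicit competitor beating the concentration threshold. This is exactly the technical heart of the Carleson–Chang paper and is not something one reproves here; in this paper it is quoted as a black box. Step 2's sharp asymptotic upper bound $\limsup_i J_{B_1}(u_i)\le\pi e$ for concentrating radial sequences is likewise the other substantive Carleson–Chang estimate (a delicate capacity/entropy argument), and is also simply cited. By contrast Step 1 is routine. Thus the ``proof'' amounts to assembling the two quantitative Carleson–Chang statements on either side of the number $\pi(1+e)$ (normalized $\pi e$) together with the trivial symmetrization identity; the paper's contribution at this point is only to record the inequality in the form $J^{\delta}_{B_1,rad\searrow}(0)<J^{\sup}_{B_1,rad\searrow}=J^{\sup}_{B_1}$ that will be transported to $F$ via Corollary \ref{corollary:supremums for rad on D for F and G are same}.
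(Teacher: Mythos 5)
Your proposal matches the paper's approach exactly: the paper does not reprove this theorem but simply cites Carleson--Chang for the strict inequality (their precise result being $\pi e=\sup_{x\in\overline{B_1}}J^{\delta}_{B_1,rad\searrow}(x)<J^{\sup}_{B_1,rad\searrow}$) and observes that $J^{\sup}_{B_1,rad\searrow}=J^{\sup}_{B_1}$ is immediate from Schwarz symmetrization, which is precisely your Steps 1--3. One small slip: your first formula $\limsup_i J_{B_1}(u_i)\le\pi(1+e)\cdot|B_1|$ has the normalization wrong (it should read $\limsup_i\int_{B_1}e^{4\pi u_i^2}\le\pi(1+e)$, hence $J_{B_1}(u_i)\le\pi e$), though you state the correct version in the parenthetical immediately after.
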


\begin{remark}
The result in Carleson and Chang is acutally more precise, stating that
$$
  \pi\,e=\sup_{x\in \overline{{B_1}}}J_{{B_1},rad\searrow}^{\delta}(x)<J_{B_1,rad\searrow}^{\sup}\,,
$$
but for our purpose we only need an estimate for the concentration level at $0.$ 
\end{remark}                    

From Lemma \ref{lemma:properties of T_a} and Theorem \ref{theorem:Carleson and Chang strict inequality} we easily deduce the following proposition.

\begin{lemma}
\label{lemma:strict ineq. between concentration level and supremum for sing. MT on disk}
Let $\{u_i\}\subset\mathcal{B}_1(B_1)$ be a sequence which concentrates at $0.$
If  $\{u_i^{\ast}\}$ also concentrates at $0,$ then the following strict inequality holds true
$$
  \limsup_{i\to\infty}F_{{B_1}}(u_i)<F_{B_1}^{\sup}\,.
$$
\end{lemma}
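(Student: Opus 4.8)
The plan is to reduce everything to the radial, decreasing setting, where Lemma~\ref{lemma:properties of T_a} converts $F_{B_1}$ into $J_{B_1}$ and Carleson--Chang applies. First I would observe that Schwarz symmetrization does not increase the Dirichlet energy, so $\{u_i^\ast\}\subset\mathcal{B}_1(B_1)$, and it does not decrease $F_{B_1}$, i.e. $F_{B_1}(u_i)\leq F_{B_1}(u_i^\ast)$. Since by hypothesis $\{u_i^\ast\}$ also concentrates at $0$, it suffices to prove the strict inequality $\limsup_{i\to\infty}F_{B_1}(u_i^\ast)<F_{B_1}^{\sup}$ for the symmetrized sequence; hence from now on I may assume $u_i\in W^{1,2}_{0,rad\searrow}(B_1)\cap\mathcal{B}_1(B_1)$.

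Next I would set $a=1-\beta/2\in(0,1]$ and apply the transformation $T_a$ from Lemma~\ref{lemma:properties of T_a}. Put $v_i=T_a(u_i)$. Then $v_i\in W^{1,2}_{0,rad}(B_1)$ with $\|\nabla v_i\|_{L^2}=\|\nabla u_i\|_{L^2}\leq 1$, and the identity \eqref{lemma:eq: transformation Adi-Sandeep} gives
$$
  F_{B_1}(u_i)=\frac{1}{a}J_{B_1}(v_i)+\frac{|B_1|}{a}-\int_{B_1}\frac{1}{|x|^\beta}.
$$
The key point is that this transformation converts the concentration hypothesis on $\{u_i\}$ into a concentration hypothesis on $\{v_i\}$ at $0$: since $u_i=T_{1/a}(v_i)$, one has $|\nabla u_i|^2$ and $|\nabla v_i|^2$ related by the same change of variables $r=s^a$ used in \eqref{eq:proof lemma Ta:transformation r and s in nabla}, so that the energy of $u_i$ outside $B_\epsilon(0)$ equals the energy of $v_i$ outside $B_{\epsilon^{1/a}}(0)$ (up to the obvious radial bookkeeping), and therefore $|\nabla v_i|^2\,dx\rightharpoonup\delta_0$ as well. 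Moreover $T_a$ preserves radial monotonicity up to the overall sign, so $\{v_i\}$ is a radially decreasing concentrating sequence in $\mathcal{B}_1(B_1)$. Consequently $\limsup_i J_{B_1}(v_i)\leq J^\delta_{B_1,rad\searrow}(0)$.

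Finally I would combine this with Carleson--Chang (Theorem~\ref{theorem:Carleson and Chang strict inequality}), which gives the strict inequality $J^\delta_{B_1,rad\searrow}(0)<J^{\sup}_{B_1}$, and with Corollary~\ref{corollary:supremums for rad on D for F and G are same}, which expresses $F_{B_1}^{\sup}$ in terms of $J_{B_1}^{\sup}$ via exactly the same affine formula. Since $a>0$, the map $t\mapsto \frac{1}{a}t+\frac{|B_1|}{a}-\int_{B_1}|x|^{-\beta}$ is strictly increasing, so
$$
  \limsup_{i\to\infty}F_{B_1}(u_i)\leq \frac{1}{a}J^\delta_{B_1,rad\searrow}(0)+\frac{|B_1|}{a}-\int_{B_1}\frac{1}{|x|^\beta}<\frac{1}{a}J^{\sup}_{B_1}+\frac{|B_1|}{a}-\int_{B_1}\frac{1}{|x|^\beta}=F_{B_1}^{\sup},
$$
which is the claim. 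The main obstacle I anticipate is the bookkeeping in the middle step: verifying rigorously that $T_a$ (equivalently $T_{1/a}$) sends a concentrating sequence to a concentrating sequence — in particular handling the behaviour of the energy near $0$ under the singular change of variables $r=s^a$ when $a<1$, and checking that the radially-decreasing structure is genuinely preserved so that one lands in the class over which $J^\delta_{B_1,rad\searrow}(0)$ is defined. Everything else is a direct invocation of the lemmas already established.
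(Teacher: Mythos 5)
Your proposal is correct and follows essentially the same route as the paper's own proof: reduce to the radially decreasing case via Schwarz symmetrization, push forward through $T_a$ using \eqref{lemma:eq: transformation Adi-Sandeep}, check that concentration is preserved via \eqref{eq:proof lemma Ta:transformation r and s in nabla}, and conclude with Theorem~\ref{theorem:Carleson and Chang strict inequality} and Corollary~\ref{corollary:supremums for rad on D for F and G are same}. The only cosmetic difference is that you replace $u_i$ by $u_i^{\ast}$ once at the outset rather than carrying both sequences through the argument, and you explicitly flag that $T_a$ preserves radial monotonicity (so $v_i$ lands in $W^{1,2}_{0,rad\searrow}$), a point the paper uses implicitly.
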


\begin{proof}
Let  $a=1-\beta/2$
and
define $v_i=T_a(u_i^{\ast}).$ Let us first show that $\{v_i\}$ concentrates at $0.$ From Lemma \ref{lemma:properties of T_a} we know that 
$\lim_{i\to\infty}\|\nabla v_i\|_{L^2}=\lim_{i\to\infty}\|\nabla u_i^{\ast}\|_{L^2}=1.$
Moreover, the same calculation which shows this identity, namely \eqref{eq:proof lemma Ta:transformation r and s in nabla}, shows also that for any $\epsilon>0$
$$
  \int_{{B_1}\backslash B_{\epsilon}(0)}|\nabla v_i|^2
  =
  \int_{{B_1}\backslash B_{\epsilon^{\frac{1}{a}}}(0)}|\nabla u_i^{\ast}|^2.
$$
It now follows that $v_i$ concentrates at $0,$ since $u_i^{\ast}$ does.
From the properties of the symmetric rearrangement, namely $F_{B_1}(u_i)\leq F_{B_1}(u_i^{\ast}),$ and from Lemma \ref{lemma:properties of T_a} we obtain that
\begin{align*}
  \limsup_{i\to\infty}F_{B_1}(u_i)\leq& \limsup_{i\to\infty}F_{B_1}(u_i^{\ast})
  =\frac{1}{a}\limsup_{i\to\infty} J_{B_1}(v_i)+\frac{|B_1|}{a}-\int_{B_1}\frac{1}{|x|^\beta}
  \smallskip \\
  \leq&\frac{1}{a} J^{\delta}_{B_1,rad\searrow}(0)+\frac{|B_1|}{a}-\int_{B_1}\frac{1}{|x|^\beta}.
\end{align*}
We now apply Theorem \ref{theorem:Carleson and Chang strict inequality} and Corollary  \ref{corollary:supremums for rad on D for F and G are same} to obtain that
$$
  \limsup_{i\to\infty}F_{B_1}(u_i)<\frac{1}{a}J_{B_1}^{\sup}+\frac{|B_1|}{a}-\int_{B_1}\frac{1}{|x|^\beta}=F_{B_1}^{\sup},
$$
which proves the proposition
\end{proof}
\smallskip

A consequence of Lemma \ref{lemma:strict ineq. between concentration level and supremum for sing. MT on disk} is the following theorem, stating that the supremum of $F_{B_1}$ is attained.

\begin{theorem}
\label{theorem:supremum of FOmega on Ball} The following strict inequality holds
$$
  F_{B_1}^{\delta}(0)<F_{B_1}^{\sup}.
$$
In particular there exists $u\in\mathcal{B}_1(B_1)$ such that $F_{B_1}^{\sup}=F_{B_1}(u).$
\end{theorem}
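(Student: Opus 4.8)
\textbf{Proof plan for Theorem \ref{theorem:supremum of FOmega on Ball}.}

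The plan is to prove the strict inequality $F_{B_1}^{\delta}(0)<F_{B_1}^{\sup}$ first, and then deduce that the supremum is attained by invoking the concentration-compactness alternative (Theorem \ref{theorem:concentration alternative for singular moser trudinger}). The inequality part is the heart of the matter, and the main obstacle is that Lemma \ref{lemma:strict ineq. between concentration level and supremum for sing. MT on disk} only handles concentrating sequences $\{u_i\}$ for which the symmetrized sequence $\{u_i^\ast\}$ \emph{also} concentrates at $0$; so the first task is to show this loss of generality is harmless, i.e. that one may always pass to a rearranged sequence without decreasing $\limsup F_{B_1}$.

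First I would take an arbitrary sequence $\{u_i\}\subset\mathcal{B}_1(B_1)$ concentrating at $0$ with $\limsup_{i\to\infty}F_{B_1}(u_i)$ as close to $F_{B_1}^{\delta}(0)$ as desired, and pass to the Schwarz symmetrizations $u_i^\ast\in W^{1,2}_{0,rad\searrow}(B_1)\cap\mathcal{B}_1(B_1)$. By the standard properties of symmetrization recalled in Section \ref{section:notation} we have $F_{B_1}(u_i)\leq F_{B_1}(u_i^\ast)$ and $\|\nabla u_i^\ast\|_{L^2}\leq\|\nabla u_i\|_{L^2}$, so $\{u_i^\ast\}$ is still in $\mathcal{B}_1(B_1)$ and still essentially maximizes the concentration level. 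Now I distinguish two cases according to whether $\{u_i^\ast\}$ concentrates at $0$ or not. If it does, Lemma \ref{lemma:strict ineq. between concentration level and supremum for sing. MT on disk} applies directly and gives $\limsup_i F_{B_1}(u_i)\leq\limsup_i F_{B_1}(u_i^\ast)<F_{B_1}^{\sup}$. If it does not concentrate at $0$, then (after a further subsequence) $\limsup_i\|\nabla u_i^\ast\|_{L^2}\leq 1$ but the gradients do not all pile up at the origin; since $u_i^\ast$ is radially decreasing one checks that this forces $\liminf_i\|\nabla u_i^\ast\|_{L^2}<1$ along a subsequence, so one can rescale by a factor $>1$ and apply Lemma \ref{lemma:compactness in the interior} to conclude $F_{B_1}(u_i^\ast)\to F_{B_1}(\text{weak limit})\leq F_{B_1}^{\sup}$, and in fact the concentration cannot produce the full supremum. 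Either way, $F_{B_1}^{\delta}(0)\leq\sup_i F_{B_1}(u_i^\ast)<F_{B_1}^{\sup}$; a little care with the "$\leq$ versus $<$" is needed here, but the strict gap coming from Carleson-Chang is uniform (it is built into Lemma \ref{lemma:strict ineq. between concentration level and supremum for sing. MT on disk} via $J^\delta_{B_1,rad\searrow}(0)<J^{\sup}_{B_1}$), so taking the supremum over all concentrating sequences preserves strictness.

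Having established $F_{B_1}^{\delta}(0)<F_{B_1}^{\sup}$, the attainment follows readily. Let $\{u_i\}\subset\mathcal{B}_1(B_1)$ be a maximizing sequence, $F_{B_1}(u_i)\to F_{B_1}^{\sup}$; by Lemma \ref{lemma new:sup over W12 same as over Cinfty} I may assume the $u_i$ are smooth and non-negative. Apply Theorem \ref{theorem:concentration alternative for singular moser trudinger}: passing to a subsequence, either $\{u_i\}$ concentrates at some $x\in\overline{B_1}$, or $F_{B_1}(u_i)\to F_{B_1}(u)$ for the weak limit $u\in\mathcal{B}_1(B_1)$. In the concentration case, if $x=0$ then $F_{B_1}^{\sup}=\limsup_i F_{B_1}(u_i)\leq F_{B_1}^{\delta}(0)<F_{B_1}^{\sup}$, a contradiction; if $x\neq 0$, then since $\beta\geq 0$ one has $\alpha\leq 4\pi$, and arguing as in Proposition \ref{proposition:if u_i concentrates somewhere else than zero} (splitting the integral away from and near the singularity, using $u_i\to 0$ in $L^2$ and Vitali) one gets $F_{B_1}(u_i)\to 0<F_{B_1}^{\sup}$, again a contradiction—alternatively, when $\beta=0$ the classical argument shows concentration at $x\neq 0$ still cannot reach the supremum because $J^\delta_{B_1}(x)\leq J^\delta_{B_1}(0)<J^{\sup}_{B_1}$. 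Hence alternative (b) holds, and $u$ is the desired maximizer: $F_{B_1}(u)=\lim_i F_{B_1}(u_i)=F_{B_1}^{\sup}$ with $\|\nabla u\|_{L^2}\leq 1$.

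The step I expect to be the main obstacle is the reduction in the first case-distinction: showing that if the symmetrized sequence $\{u_i^\ast\}$ fails to concentrate at $0$ then nothing is lost, i.e. its $\limsup F_{B_1}$ is still $\leq F_{B_1}^{\sup}$ and moreover cannot exceed (or even equal, in the relevant sense) the level $F_{B_1}^{\delta}(0)$. The subtlety is that a radially decreasing sequence with $\|\nabla u_i^\ast\|_{L^2}\to 1$ might not concentrate at $0$ at all (its mass could, in principle, fail to localize), and one must rule out that such behavior yields a value between $F_{B_1}^{\delta}(0)$ and $F_{B_1}^{\sup}$; this is where the quantitative input from Carleson-Chang, propagated through Lemma \ref{lemma:properties of T_a}, does the real work.
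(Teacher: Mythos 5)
Your overall plan coincides with the paper's: pass to the Schwarz rearrangements $u_i^\ast$, split according to whether $\{u_i^\ast\}$ concentrates at the origin, handle the concentrating case with Lemma \ref{lemma:strict ineq. between concentration level and supremum for sing. MT on disk}, and then obtain attainment from Theorem \ref{theorem:concentration alternative for singular moser trudinger} together with Proposition \ref{proposition:if u_i concentrates somewhere else than zero}. The attainment part of your argument is fine. However, your treatment of the non-concentrating case contains a genuine gap.

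You claim that if $\{u_i^\ast\}$ is radially decreasing and fails to concentrate at $0$, then $\liminf_i\|\nabla u_i^\ast\|_{L^2}<1$, and you then propose to rescale and apply Lemma \ref{lemma:compactness in the interior}. That implication is false. Failure to concentrate means either $\|\nabla u_i^\ast\|_{L^2}\not\to 1$ or that a fixed fraction of gradient mass stays outside $B_{\epsilon_0}$ for some $\epsilon_0>0$; the second option is entirely compatible with $\|\nabla u_i^\ast\|_{L^2}=1$ for every $i$. For instance, glue a Moser profile of Dirichlet energy $1-c$ on $[0,\epsilon_0]$ to a steep monotone descent from a value $B_i=1/i$ down to $0$ over a shrinking annulus $[\epsilon_0,\epsilon_0+\delta_i]$, with $\delta_i$ chosen so the annular energy equals $c$. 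This is radially decreasing, converges to $0$ in $L^2$, has $\|\nabla u_i^\ast\|_{L^2}=1$, and does not concentrate at $0$. So the proposed rescaling step is unavailable, and you cannot invoke Lemma \ref{lemma:compactness in the interior} on this route.

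The fix is simpler and is precisely what the paper does: apply the concentration--compactness alternative (Theorem \ref{theorem:concentration alternative for singular moser trudinger}) directly to $\{u_i^\ast\}$. If it does not concentrate, then along a subsequence $F_{B_1}(u_i^\ast)\to F_{B_1}(u)$ where $u$ is the weak limit; but $u_i^\ast\to 0$ in $L^2$ forces $u=0$, hence $F_{B_1}(u_i^\ast)\to 0$, and combined with $F_{B_1}(u_i)\le F_{B_1}(u_i^\ast)$ this gives $F_{B_1}^{\delta}(0)=0$, contradicting Remark \ref{remark:F delta zero is not zero}. No hypothesis on $\|\nabla u_i^\ast\|_{L^2}$ being bounded away from $1$ is needed. (Also note that in your final sentence you attribute the strict gap in the non-concentrating case to Carleson--Chang; that input is used only in the concentrating case, while the non-concentrating case is eliminated by the above contradiction with $F_{B_1}^{\delta}(0)>0$.)
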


\begin{proof}
Let $\{u_i\}\subset\mathcal{B}_1(B_1)$ be a concentrating sequence at $0,$ which maximizes $F^{\delta}_{B_1}(0).$ Using \eqref{eq:properties of concentrating sequences} and the properties of symmetrization we obtain that $u_i^{\ast}\to 0$ in $L^2(B_1).$ Therefore $u_i^{\ast}$ must concentrate, otherwise we get from Theorem \ref{theorem:concentration alternative for singular moser trudinger} and Remark \ref{remark:F delta zero is not zero} the contradiction (taking again some subsequence)
$$
  0<F^{\delta}_{B_1}(0)=\lim_{i\to\infty}F_{B_1}(u_i)\leq \lim_{i\to\infty}F_{B_1}(u_i^{\ast})=F_{B_1}(0)=0.
$$
Thus $u_i^{\ast}$ must concentrate at $0.$ We can apply lemma \ref{lemma:strict ineq. between concentration level and supremum for sing. MT on disk} to obtain that
$$
  F_{B_1}^{\delta}(0)<F_{B_1}^{\sup}.
$$
The second statement of the theorem follows from this strict inequality, Theorem \ref{theorem:concentration alternative for singular moser trudinger} and Proposition \ref{proposition:if u_i concentrates somewhere else than zero}.
\end{proof}

\section{Ball to Domain Construction}\label{section:ball to domain}

In view of Proposition \ref{proposition:sup attained if 0 notin Omegabar}, it remains to prove Theorem \ref{theorem:intro:Extremal for Singular Moser-Trudinger} for general domain with $0\in\Omegabar,$ when $\alpha/(4\pi)+\beta/2=1,$ and we can also take $\beta>0.$
Hence from now on we always assume that we are in this case. In addition, we assume in this section and Section \ref{section:domain to ball} that $0\in\Omega.$ The ball to domain construction is given by the following defnition: for $v\in W^{1,2}_{0,rad}(B_1)$  and $x\in\Omega,$ define $P_x(v)=u:\Omega\backslash\{x\}\to \re$ by
\begin{equation*}
  P_x(v)(y)=v\left(e^{-2\pi G_{\Omega,x}(y)}\right)=v\left(\left(G_{B_1,0}\right)^{-1}(G_{\Omega,x}(y)\right),
\end{equation*}
where, by abuse of notation, we have identified $v$ and $G_{B_1,0}$ with the corresponding radial function.
The main result of this section is the following theorem.

\begin{theorem}
\label{theorem:ball to general domain:sup inequality}
For any  $v\in W^{1,2}_{0,rad}(B_1)\cap \mathcal{B}_1(B_1)$ define $u=P_0(v).$ Then $u\in   \mathcal{B}_1(\Omega)$ and it satisfies
$$
  F_{\Omega}(u)\geq I_{\Omega}(0)^{2-\beta}F_{B_1}(v).
$$
In particular the following inequality holds true
$$
  F_{\Omega}^{\text{sup}}\geq I_{\Omega}(0)^{2-\beta}F^{\sup}_{B_1}.
$$
Moreover if $\{v_i\}\subset W_{0,rad}^{1,2}(B_1)$ concentrates at $0,$ then $u_i=P_0(v_i)$ concentrates at $0.$
\end{theorem}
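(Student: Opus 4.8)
The plan is to transport everything to a one‑dimensional comparison along the level sets of the Green's function $G:=G_{\Omega,0}$, replacing the classical isoperimetric inequality used by Flucher with the singularly weighted one of \cite{Csato}. Write $\phi(t)=v(e^{-2\pi t})$, so that $u=P_0(v)=\phi\circ G$ and, formally, $\nabla u=\phi'(G)\nabla G$. I would first settle that $u\in\mathcal B_1(\Omega)$. Since $P_0$ is linear in $v$, it suffices to treat $v\in C^\infty_{c,rad}(B_1)$ and pass to the limit. For such $v$ one checks $P_0(v)\in C^\infty_c(\Omega)$: it vanishes where $e^{-2\pi G}\ge\rho_0$ (with $v\equiv 0$ on $\{r\ge\rho_0\}$), i.e. on a neighbourhood of $\delomega$, while near the origin, writing $v(r)=\tilde v(r^2)$ with $\tilde v$ smooth and using $e^{-4\pi G(y)}=|y|^2e^{4\pi H_{\Omega,0}(y)}$ (smooth up to $0$), one gets $u(y)=\tilde v\!\left(|y|^2e^{4\pi H_{\Omega,0}(y)}\right)$, smooth. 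By the coarea formula and Proposition \ref{proposition:properties of Green's function}(b),
\[
  \intomega|\nabla u|^2=\int_0^\infty\phi'(t)^2\Big(\int_{\{G=t\}}|\nabla G|\,d\sigma\Big)dt=\int_0^\infty\phi'(t)^2\,dt,
\]
and the identical computation on $B_1$ (where $G_{B_1,0}=-\tfrac1{2\pi}\log|x|$ and $I_{B_1}(0)=1$) gives $\int_{B_1}|\nabla v|^2=\int_0^\infty\phi'(t)^2\,dt$. Hence $\|\nabla u\|_{L^2(\Omega)}=\|\nabla v\|_{L^2(B_1)}$; density of $C^\infty_{c,rad}(B_1)$, linearity of $P_0$ and the Poincar\'e inequality then extend this to all radial $v$ and show $u\in W^{1,2}_0(\Omega)$, so $u\in\mathcal B_1(\Omega)$.

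For the inequality, set $m(t)=\int_{\{G>t\}}|y|^{-\beta}\,dy$ and $m_{B_1}(t)=\int_{\{|x|<e^{-2\pi t}\}}|x|^{-\beta}\,dx=\frac{2\pi}{2-\beta}e^{-2\pi(2-\beta)t}$. Since $\beta<2$ and $0\in\Omega$, the weight is locally integrable, $m$ is locally absolutely continuous, and coarea gives $-m'(t)=\int_{\{G=t\}}\frac{|y|^{-\beta}}{|\nabla G|}\,d\sigma$ a.e. Using $e^{\alpha s}-1=\int_0^\infty\alpha e^{\alpha\tau}\chi_{\{s>\tau\}}\,d\tau$, Tonelli, the identity $\{u^2>\tau\}=G^{-1}(\{t:\phi(t)^2>\tau\})$ and coarea applied to $G$,
\[
  F_\Omega(u)=\int_0^\infty\alpha e^{\alpha\tau}\Big(\int_{\{t:\,\phi(t)^2>\tau\}}(-m'(t))\,dt\Big)d\tau,
\]
and the same identity holds for $F_{B_1}(v)$ with $m_{B_1}$ in place of $m$. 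Thus it suffices to prove the pointwise estimate $-m'(t)\ge I_\Omega(0)^{2-\beta}\,(-m_{B_1}'(t))$ for a.e. $t>0$: integrating it against the nonnegative measure $\alpha e^{\alpha\tau}\,d\tau$ over the (arbitrary) sets $\{t:\phi(t)^2>\tau\}$ yields $F_\Omega(u)\ge I_\Omega(0)^{2-\beta}F_{B_1}(v)$, and the supremum inequality follows by taking suprema, since $F_{B_1}^{\sup}$ equals the supremum over radial competitors by Schwarz symmetrization.

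To prove the pointwise estimate, fix a regular value $t>0$ of $G$ (a.e. $t$, by Sard), so $\Omega_t:=\{G>t\}\subset\subset\Omega$ has smooth boundary $\{G=t\}$. Cauchy–Schwarz on $\{G=t\}$ and Proposition \ref{proposition:properties of Green's function}(b) give
\[
  \Big(\int_{\{G=t\}}|y|^{-\beta/2}\,d\sigma\Big)^2\le\Big(\int_{\{G=t\}}\frac{|y|^{-\beta}}{|\nabla G|}\,d\sigma\Big)\Big(\int_{\{G=t\}}|\nabla G|\,d\sigma\Big)=-m'(t),
\]
and applying the sharp weighted isoperimetric inequality of Csat\'o \cite{Csato}, namely $\big(\int_{\partial A}|y|^{-\beta/2}\,d\sigma\big)^2\ge 2\pi(2-\beta)\int_A|y|^{-\beta}\,dy$ (equality for balls centred at $0$), to $A=\Omega_t$ yields the differential inequality $-m'(t)\ge 2\pi(2-\beta)\,m(t)$; equivalently $e^{2\pi(2-\beta)t}m(t)$ is non‑increasing. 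By Proposition \ref{proposition:properties of Green's function}(e) the sets $\Omega_t$ are approximately small disks at $0$ of radius $I_\Omega(0)e^{-2\pi t}$, so $m(t)\sim\frac{2\pi}{2-\beta}I_\Omega(0)^{2-\beta}e^{-2\pi(2-\beta)t}$ as $t\to\infty$, i.e. $e^{2\pi(2-\beta)t}m(t)\to\frac{2\pi}{2-\beta}I_\Omega(0)^{2-\beta}$. A non‑increasing function converging to a limit stays above it, so $m(t)\ge I_\Omega(0)^{2-\beta}m_{B_1}(t)$ for all $t>0$; inserting this into $-m'(t)\ge 2\pi(2-\beta)m(t)$ and using $-m_{B_1}'(t)=2\pi(2-\beta)m_{B_1}(t)$ gives the claimed estimate. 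For the concentration statement, $\|\nabla u_i\|_{L^2(\Omega)}=\|\nabla v_i\|_{L^2(B_1)}\to1$ by the gradient identity, and for $\epsilon>0$, since $G=-\tfrac1{2\pi}\log|y|-H_{\Omega,0}$ with $H_{\Omega,0}$ bounded on $\Omegabar$, $G$ is bounded on $\Omegabar\setminus B_\epsilon(0)$, say by $M_\epsilon$, hence $\Omega\setminus B_\epsilon(0)\subset\{G\le M_\epsilon\}$ and, with $\phi_i(t)=v_i(e^{-2\pi t})$ and coarea as above,
\[
  \int_{\Omega\setminus B_\epsilon(0)}|\nabla u_i|^2\le\int_{\{G\le M_\epsilon\}}|\nabla u_i|^2=\int_0^{M_\epsilon}\phi_i'(t)^2\,dt=\int_{B_1\setminus B_\rho(0)}|\nabla v_i|^2,\qquad\rho:=e^{-2\pi M_\epsilon},
\]
which tends to $0$ since $v_i$ concentrates at $0$; thus $u_i$ concentrates at $0$.

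The genuinely new input is the sharp singularly weighted isoperimetric inequality, which is used here as a black box from \cite{Csato}; granting it, the only real subtlety is the ``integrate from $t=\infty$'' argument — it is what forces the conformal incenter $I_\Omega(0)$ to appear and relies on the fine asymptotics in Proposition \ref{proposition:properties of Green's function}(e), and one genuinely needs the resulting lower bound for $m(t)$ rather than the bound from integrating the differential inequality from $t=0$. The remaining points (measure‑zero critical set of $G$, local integrability of the weight for $\beta<2$, the density/regularity argument identifying $P_0(v)$ as an element of $W^{1,2}_0(\Omega)$ with $\nabla P_0(v)=\phi'(G)\nabla G$) are routine.
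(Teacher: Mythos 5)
Your proof is correct, but it reaches the crucial pointwise lower bound on $-m'(t)=\int_{\{G=t\}}\frac{|y|^{-\beta}}{|\nabla G|}\,d\sigma$ by a genuinely different route from the paper. The paper isolates this bound as Lemma \ref{lemma:upper bound for conformal incenter with range of r}: it applies the weighted isoperimetric inequality in its Green's-function form (Theorem \ref{theorem:upper bound for R by Greens function with weight}) directly to the sub-level domains $A_r=\{G_{\Omega,0}>-\tfrac{1}{2\pi}\log r\}$, and then converts $|A_r|^{1-\beta/2}$ into $(rI_\Omega(0))^{2-\beta}$ through the conformal radius scaling $I_{A_r}(0)=rI_\Omega(0)$ together with Proposition \ref{proposition:properties of Green's function}(d). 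You instead pair the purely geometric form of the weighted isoperimetric inequality with a Cauchy--Schwarz estimate and Proposition \ref{proposition:properties of Green's function}(b) to obtain the differential inequality $-m'(t)\ge 2\pi(2-\beta)m(t)$, i.e.\ monotonicity of $e^{2\pi(2-\beta)t}m(t)$, and then pin down the constant by computing the limit of this quantity as $t\to\infty$ from the ``approximately small disks'' asymptotics of Proposition \ref{proposition:properties of Green's function}(e). Your ``integrate from $t=\infty$'' argument is thus what produces $I_\Omega(0)^{2-\beta}$, whereas the paper produces it via the subdomain conformal-radius identity and Proposition (d). Both are valid; the paper's version is a single algebraic step once Lemma \ref{lemma:upper bound for conformal incenter with range of r} is available and avoids any analysis of limits, while your version avoids Proposition (d) and the subdomain Green's function bookkeeping entirely but does require the absolute continuity of $m$ and the Prop.\ (e) asymptotics. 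One small point worth flagging: you quote Csat\'o's result in the purely geometric form $\bigl(\int_{\partial A}|y|^{-\beta/2}\,d\sigma\bigr)^2\ge 2\pi(2-\beta)\int_A|y|^{-\beta}$, whereas the paper cites it as Theorem \ref{theorem:upper bound for R by Greens function with weight}. In fact your differential inequality can also be deduced from the latter combined with the elementary rearrangement bound $m(t)\le\frac{2\pi^{\beta/2}}{2-\beta}|\Omega_t|^{1-\beta/2}$, so no additional input is actually needed. The remaining portions of your argument (the coarea identity $\|\nabla P_0(v)\|_{L^2(\Omega)}=\|\nabla v\|_{L^2(B_1)}$ using Prop.\ (b), the density and regularity discussion, the rewriting of $F_\Omega(u)$ as $\int_0^\infty(e^{\alpha\phi(t)^2}-1)(-m'(t))\,dt$, the reduction of the sup-inequality by Schwarz symmetrization, and the proof of concentration) coincide in substance with the paper's Lemma \ref{lemma:ball to domain via G:preserves dirichlet norm} and the body of the theorem's proof.
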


The proof of this theorem is based on the following theorem proven in Csat\'o \cite{Csato} Theorem 12, which is a consequence of a weighted isoperimetric inequality.

\begin{theorem}
\label{theorem:upper bound for R by Greens function with weight}
Let $\Omega$ be a bounded open smooth connected set with $0\in\Omega$ and let also $x\in\Omega.$ then the following inequality holds true 
$$
  |\Omega|^{1-\frac{\beta}{2}}\leq \frac{1}{4\pi^{1+\frac{\beta}{2}}} \int_{\delomega} \frac{1}{|y|^\beta|\nabla G_{\Omega,x}(y)|}d\sigma(y).
$$
\end{theorem}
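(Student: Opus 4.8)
The plan is to derive the inequality from two ingredients: a sharp weighted isoperimetric inequality (the deep geometric input, which is established separately in Csat\'o \cite{Csato}), and the elementary Cauchy--Schwarz inequality on the boundary $\partial\Omega=\{G_{\Omega,x}=0\}$, combined with property (b) of Proposition \ref{proposition:properties of Green's function}. The point of the argument is that everything except the isoperimetric inequality is bookkeeping, and the argument is uniform in $x$ (identity (b) holds for all $x$, and the geometric step does not involve $x$ at all).

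The geometric ingredient I would invoke is the following: if $A\subset\re^2$ is a bounded, open, connected, smooth set with $0\in A$ and $\gamma\in[0,1)$, then
$$
  \int_{\partial A}\frac{d\sigma}{|y|^{\gamma}}\ \geq\ 2\pi^{\frac{1+\gamma}{2}}\,|A|^{\frac{1-\gamma}{2}},
$$
with equality exactly for disks centered at the origin. (Both hypotheses are needed: without connectedness, a tiny disk around $0$ joined by a thin neck to a far-off large disk violates the estimate, and without $0\in A$ a disk pushed far from the origin violates it.) Since $\beta\in[0,2)$ we have $\gamma:=\beta/2\in[0,1)$, and $\Omega$ satisfies the hypotheses by assumption, so
$$
  \int_{\partial\Omega}\frac{d\sigma}{|y|^{\beta/2}}\ \geq\ 2\pi^{\frac12+\frac\beta4}\,|\Omega|^{\frac12-\frac\beta4}.
$$

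Next I would bound the left-hand side above by the integral appearing in the theorem. On $\partial\Omega$ I factor $|y|^{-\beta/2}=\big(|y|^{-\beta}\,|\nabla G_{\Omega,x}|^{-1}\big)^{1/2}\,|\nabla G_{\Omega,x}|^{1/2}$ and apply Cauchy--Schwarz:
$$
  \left(\int_{\partial\Omega}\frac{d\sigma}{|y|^{\beta/2}}\right)^{2}
  \ \leq\ \left(\int_{\partial\Omega}\frac{d\sigma}{|y|^{\beta}\,|\nabla G_{\Omega,x}|}\right)
  \left(\int_{\partial\Omega}|\nabla G_{\Omega,x}|\,d\sigma\right)
  \ =\ \int_{\partial\Omega}\frac{d\sigma}{|y|^{\beta}\,|\nabla G_{\Omega,x}|},
$$
the last equality being Proposition \ref{proposition:properties of Green's function}(b) at $t=0$, where $\{G_{\Omega,x}=0\}=\partial\Omega$ because $G_{\Omega,x}>0$ in $\Omega$. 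All integrals are finite: $\partial\Omega$ is compact and, since $0\in\Omega$, bounded away from the origin, while $G_{\Omega,x}$ is smooth up to $\partial\Omega$ with $|\nabla G_{\Omega,x}|>0$ there by the Hopf lemma.

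Finally, squaring the isoperimetric estimate and chaining it with the Cauchy--Schwarz bound gives
$$
  \int_{\partial\Omega}\frac{d\sigma}{|y|^{\beta}\,|\nabla G_{\Omega,x}|}
  \ \geq\ \left(\int_{\partial\Omega}\frac{d\sigma}{|y|^{\beta/2}}\right)^{2}
  \ \geq\ 4\pi^{1+\frac\beta2}\,|\Omega|^{1-\frac\beta2},
$$
which is the claimed inequality after dividing by $4\pi^{1+\beta/2}$. The one genuinely hard step is the weighted isoperimetric inequality above; the rest is elementary. As a consistency check, every inequality becomes an equality when $\Omega=B_R(0)$ and $x=0$ (then $|y|=R$ and $|\nabla G_{\Omega,0}|=(2\pi|y|)^{-1}$ are constant on $\partial\Omega$), so the constant $4\pi^{1+\beta/2}$ is sharp.
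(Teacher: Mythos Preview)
Your argument is correct. The paper itself does not supply a proof of this theorem; it simply records that the result is established as Theorem~12 in Csat\'o~\cite{Csato} and that it is a consequence of a weighted isoperimetric inequality. Your proposal makes this derivation explicit: the weighted isoperimetric inequality you quote (with exponent $\gamma=\beta/2$) is exactly the deep input from \cite{Csato}, and the passage from it to the statement via Cauchy--Schwarz on $\partial\Omega$ together with Proposition~\ref{proposition:properties of Green's function}(b) at $t=0$ is the natural and standard way to obtain an integral involving $|\nabla G_{\Omega,x}|^{-1}$ from a purely geometric perimeter-type bound. Your equality check on $B_R(0)$ with $x=0$ confirms the constants. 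So your proof is sound and in the same spirit as what the paper points to; there is nothing to correct.
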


Before proving Theorem \ref{theorem:ball to general domain:sup inequality} we prove several intermediate results. 

\begin{lemma}
\label{lemma:upper bound for conformal incenter with range of r}
Define the sets $S_r$ for $r\in (0,1]$ by
$$
  S_r=\left\{y\in\Omegabar:\,G_{\Omega,0}(y)=-\frac{1}{2\pi}\log(r)\right\}.
$$
Then the following inequality holds true
$$
  \left|I_{\Omega}(0)\right|^{2-\beta}\leq \frac{r^{\beta-2}}{4\pi^2 }\int_{S_r} \frac{1}{|y|^\beta|\nabla G_{\Omega,0}(y)|}d\sigma(y)\quad\forall\,r\in (0,1].
$$
\end{lemma}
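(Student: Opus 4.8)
The plan is to reduce this lemma to Theorem \ref{theorem:upper bound for R by Greens function with weight} by replacing the original domain $\Omega$ with the sublevel set $\Omega_r = \{y \in \Omega : G_{\Omega,0}(y) > -\tfrac{1}{2\pi}\log r\}$. The key observation is that the level set $S_r$ is exactly the boundary $\partial\Omega_r$, and that on this set $\Omega_r$ ``inherits'' the Green's function of $\Omega$ up to an additive constant. Concretely, I would first verify that $G_{\Omega_r,0}(y) = G_{\Omega,0}(y) + \tfrac{1}{2\pi}\log r$ for $y \in \Omega_r$: the right-hand side is harmonic in $\Omega_r \setminus \{0\}$, has the correct logarithmic singularity $-\tfrac{1}{2\pi}\log|y|$ at $0$ (the added constant does not affect the singular part), and vanishes on $\partial\Omega_r = S_r$ by definition of $S_r$. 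Hence by uniqueness of the Green's function it equals $G_{\Omega_r,0}$. In particular $|\nabla G_{\Omega_r,0}| = |\nabla G_{\Omega,0}|$ on $S_r$.

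Next I would relate the conformal incenters. From the decomposition $G_{\Omega,0}(y) = -\tfrac{1}{2\pi}\log|y| - H_{\Omega,0}(y)$ and the identity just proved, the regular part of $G_{\Omega_r,0}$ is $H_{\Omega_r,0}(y) = H_{\Omega,0}(y) - \tfrac{1}{2\pi}\log r$, so evaluating at $0$ and using $I_\Omega(0) = e^{-2\pi H_{\Omega,0}(0)}$ gives
$$
  I_{\Omega_r}(0) = e^{-2\pi H_{\Omega_r,0}(0)} = e^{-2\pi H_{\Omega,0}(0)} \cdot e^{\log r} = r\, I_\Omega(0).
$$
I should also check that $\Omega_r$ is an admissible domain for Theorem \ref{theorem:upper bound for R by Greens function with weight}, i.e. bounded, open, connected, smooth, with $0 \in \Omega_r$. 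Boundedness and $0 \in \Omega_r$ are clear; connectedness of the superlevel set containing the singularity is standard for Green's functions; smoothness of $S_r$ holds for a.e. $r$ by Sard's theorem applied to $G_{\Omega,0}$, and since the final inequality is claimed for all $r \in (0,1]$ one either argues that the relevant quantities are continuous/monotone in $r$ to pass from a.e. $r$ to all $r$, or invokes that $G_{\Omega,0}$ has no critical points near its level sets close to the boundary — this technical point about smoothness of $\Omega_r$ is the part I expect to need the most care.

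Finally I would apply Theorem \ref{theorem:upper bound for R by Greens function with weight} to $\Omega_r$ with $x = 0$. Using $|\Omega_r|^{1-\beta/2} \geq$ (something) is not what we want — rather, I use that by Proposition \ref{proposition:properties of Green's function}(c) applied to $\Omega_r$, or more directly by the analogue of property (d), we have the relation between $|\Omega_r|$ and $I_{\Omega_r}(0)$; but the cleanest route is: Theorem \ref{theorem:upper bound for R by Greens function with weight} gives
$$
  |\Omega_r|^{1-\frac{\beta}{2}} \leq \frac{1}{4\pi^{1+\frac{\beta}{2}}} \int_{S_r} \frac{1}{|y|^\beta |\nabla G_{\Omega_r,0}(y)|}\, d\sigma(y) = \frac{1}{4\pi^{1+\frac{\beta}{2}}} \int_{S_r} \frac{1}{|y|^\beta |\nabla G_{\Omega,0}(y)|}\, d\sigma(y),
$$
and combining with $I_{\Omega_r}(0) \leq I_{(\Omega_r)^\ast}(0)$, i.e. the isoperimetric-type bound $I_{\Omega_r}(0)^2 \leq |\Omega_r|/\pi$ from Proposition \ref{proposition:properties of Green's function}(d), yields
$$
  \pi^{1-\frac{\beta}{2}}\, I_{\Omega_r}(0)^{2-\beta} \leq |\Omega_r|^{1-\frac{\beta}{2}} \leq \frac{1}{4\pi^{1+\frac{\beta}{2}}} \int_{S_r} \frac{1}{|y|^\beta |\nabla G_{\Omega,0}(y)|}\, d\sigma(y).
$$
Substituting $I_{\Omega_r}(0) = r\, I_\Omega(0)$ and simplifying the powers of $\pi$ gives
$$
  I_\Omega(0)^{2-\beta} \leq \frac{r^{\beta-2}}{4\pi^2} \int_{S_r} \frac{1}{|y|^\beta |\nabla G_{\Omega,0}(y)|}\, d\sigma(y),
$$
which is the claim. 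The main obstacle, as noted, is the regularity/admissibility of $\Omega_r$ for all $r$ rather than almost every $r$; a fallback is to prove the inequality first for a.e. $r$ and then extend by a limiting argument using monotonicity of the superlevel sets.
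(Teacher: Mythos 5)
Your proposal is correct and follows essentially the same route as the paper: you replace $\Omega$ by the superlevel set $A_r=\{G_{\Omega,0}>-\tfrac{1}{2\pi}\log r\}$, identify $G_{A_r,0}=G_{\Omega,0}+\tfrac{1}{2\pi}\log r$ to get $I_{A_r}(0)=rI_\Omega(0)$, apply Theorem~\ref{theorem:upper bound for R by Greens function with weight} on $A_r$, and close the estimate with Proposition~\ref{proposition:properties of Green's function}(d). The only difference is that you explicitly flag the smoothness of $\partial A_r$ as a potential issue (the paper passes over it silently, checking only connectedness via the strong maximum principle), which is a reasonable point of care but does not change the structure of the argument.
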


\begin{remark}
The special case $\beta=0$ is exactly Theorem 17 in Flucher.
\end{remark}

\begin{proof} The set $S_r$ is the boundary of $A_r$ given by
$$
  A_r=\left\{y\in\Omegabar:\,G_{\Omega,0}(y)>-\frac{1}{2\pi}\log(r)\right\}.
$$
Note that $0\in A_r$ for all $r\in (0,1]$ and 
$$
  G_{A_r,0}(y)=G_{\Omega,0}(y)+\frac{1}{2\pi}\log(r) =-\frac{1}{2\pi}\log(|y|)-H_{A_r,0}(y),
$$
where $H_{A_r,0}$ is the regulart part of the Green's function $G_{A_r,0}.$ We thus get
$$
  H_{A_r,0}(y)=H_{\Omega,0}(y)-\frac{1}{2\pi}\log(r).
$$
From the definition of the conformal incenter we get
\begin{equation}\label{eq:I Ar equal r I}
  I_{A_r}(x)=e^{-2\pi H_{A_r,x}(x)}=r\,I_{\Omega}(x).
\end{equation}
Note also that by the strong maximum principle the sets $A_r$ are connected for all $r\in (0,1].$ Applying Theorem \ref{theorem:upper bound for R by Greens function with weight} to the domain $\Omega=A_r$ and $\delomega=S_r$ we get
$$
  |A_r|^{1-\frac{\beta}{2}}\leq \frac{1}{4\pi^{1+\frac{\beta}{2}}} \int_{S_r} \frac{1}{|y|^\beta|\nabla G_{\Omega,0}(y)|}d\sigma(y).
$$
Now we use that $|A_r|=\pi b_r^2$ for some $b_r>0.$ It follows from Proposition \ref{proposition:properties of Green's function} (d) and \eqref{eq:I Ar equal r I} that
$$
  r I_{\Omega}(0)=I_{A_r}(0)\leq b_r=\sqrt{\frac{|A_r|}{\pi}}.
$$
Setting this into the previous inequality gives
$$
  |r I_{\Omega}(0)|^{2-\beta}\leq\left(\frac{|A_r|}{\pi}\right)^{1-\frac{\beta}{2}}\leq 
  \frac{1}{4\pi^2} \int_{S_r} \frac{1}{|y|^\beta|\nabla G_{\Omega,0}(y)|}d\sigma(y),
$$
from which the lemma follows.
\end{proof}

The following lemma holds true for any domains, whether containing the origin or not. So we state this general version, although we will use it with $x=0.$

\begin{lemma}
\label{lemma:ball to domain via G:preserves dirichlet norm} Let $x\in\Omega$ and let $v\in W^{1,2}_{0,rad}(B_1).$
Then $P_x(v)\in W^{1,2}_0(\Omega)$ and in particular 
\begin{equation}\label{lemma:eq:nablu u is nabla Pu}
  \|\nabla (P_x(v))\|_{L^2(\Omega)}=\|\nabla v\|_{L^2(B_1)}.
\end{equation}
Moreover if $\{v_i\}\subset W_{0,rad}^{1,2}(B_1)$ concentrates at $0,$ then $P_x(v_i)$ concentrates at $x.$
\end{lemma}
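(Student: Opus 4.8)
The plan is to factor $P_x(v)$ through the value of the Green's function and reduce everything to a one–dimensional change of variables, exactly in the spirit of the proof of Lemma~\ref{lemma:properties of T_a}. Write $P_x(v)=\phi\circ G_{\Omega,x}$ with profile $\phi(t)=v(e^{-2\pi t})$, $t\in[0,\infty)$. For $v\in C^{\infty}_{c,rad}(B_1)$ the profile $\phi$ is bounded and vanishes near $t=0$, $G_{\Omega,x}$ is smooth on $\Omega\setminus\{x\}$, so $P_x(v)\in C^1(\Omega\setminus\{x\})$, it extends continuously to $x$ (with value $v(0)$), it vanishes near $\delomega$, and $\nabla(P_x(v))=\phi'(G_{\Omega,x})\,\nabla G_{\Omega,x}$ on $\Omega\setminus\{x\}$.

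First I would prove \eqref{lemma:eq:nablu u is nabla Pu} for smooth $v$. By Proposition~\ref{proposition:properties of Green's function}(a) (equivalently, the coarea formula on $\Omega\setminus\{x\}$ together with (b)), the push-forward of the measure $|\nabla G_{\Omega,x}|^2\,dy$ on $\Omega$ under $G_{\Omega,x}$ is Lebesgue measure on $(0,\infty)$, so for every nonnegative Borel $h$
$$
  \int_{\Omega}h(G_{\Omega,x})\,|\nabla G_{\Omega,x}|^2\,dy=\int_0^{\infty}h(t)\,dt .
$$
Taking $h=|\phi'|^2$ and substituting $s=e^{-2\pi t}$ gives $\int_{\Omega}|\nabla(P_x(v))|^2=\int_0^{\infty}|\phi'(t)|^2\,dt=\int_0^1|v'(s)|^2\,2\pi s\,ds=\|\nabla v\|_{L^2(B_1)}^2$, and the same computation restricted to the sublevel set $\{G_{\Omega,x}<T\}$ yields the localized identity
$$
  \int_{\{G_{\Omega,x}<T\}}|\nabla(P_x(v))|^2=\int_{B_1\setminus B_{e^{-2\pi T}}}|\nabla v|^2 ,
$$
which I will use below. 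That $P_x(v)\in W^{1,2}_0(\Omega)$ for smooth $v$ follows exactly as in Lemma~\ref{lemma:properties of T_a}: integrating by parts against $\varphi\in C_c^{\infty}(\Omega)$ over $\Omega\setminus B_{\epsilon}(x)$, the boundary term over $\partial B_{\epsilon}(x)$ is bounded by $\|P_x(v)\|_{L^{\infty}}\|\varphi\|_{L^{\infty}}\,2\pi\epsilon\to0$, while $\nabla(P_x(v))\in L^2(\Omega)$ by the above. Poincar\'e's inequality and the linearity of $P_x$ then give $\|P_x(v)\|_{W^{1,2}}\le C\|\nabla v\|_{L^2}$, so $P_x$ extends by density to all of $W^{1,2}_{0,rad}(B_1)$ with \eqref{lemma:eq:nablu u is nabla Pu} preserved (and the extension agrees almost everywhere with the pointwise formula along a subsequence, as for $T_a$).

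For the concentration claim, fix $\epsilon>0$ with $\overline{B_{\epsilon}(x)}\subset\Omega$. Since $G_{\Omega,x}(y)=-\tfrac{1}{2\pi}\log|x-y|-H_{\Omega,x}(y)$ with $H_{\Omega,x}$ smooth, hence bounded, on $\overline{B_{\epsilon}(x)}$, one has $G_{\Omega,x}\le t_2(\epsilon):=-\tfrac{1}{2\pi}\log\epsilon-\min_{\overline{B_{\epsilon}(x)}}H_{\Omega,x}$ on $\partial B_{\epsilon}(x)$, and then, by the maximum principle for the nonnegative harmonic function $G_{\Omega,x}$ on $\Omega\setminus\overline{B_{\epsilon}(x)}$ (it vanishes on $\delomega$), also on all of $\Omega\setminus B_{\epsilon}(x)$. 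Hence $\Omega\setminus B_{\epsilon}(x)\subset\{G_{\Omega,x}<t_2(\epsilon)\}$ up to a null set, and the localized identity with $T=t_2(\epsilon)$ gives
$$
  \int_{\Omega\setminus B_{\epsilon}(x)}|\nabla(P_x(v_i))|^2\le\int_{\{G_{\Omega,x}<t_2(\epsilon)\}}|\nabla(P_x(v_i))|^2=\int_{B_1\setminus B_{e^{-2\pi t_2(\epsilon)}}}|\nabla v_i|^2 ,
$$
which tends to $0$ as $i\to\infty$ since $e^{-2\pi t_2(\epsilon)}$ is a fixed positive radius and $\{v_i\}$ concentrates at $0$; combined with $\|\nabla(P_x(v_i))\|_{L^2}=\|\nabla v_i\|_{L^2}\to1$ from \eqref{lemma:eq:nablu u is nabla Pu}, this says $P_x(v_i)$ concentrates at $x$.

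The step I expect to be most delicate is the bookkeeping around the singularity of $G_{\Omega,x}$ at $x$: justifying the coarea formula and the $s=e^{-2\pi t}$ substitution for merely $W^{1,2}$ profiles $v$ (handled by the density argument once the smooth case and the uniform bound are in place) and, in particular, the a.e.\ identification of the density-defined $P_x(v)$ with its pointwise formula, which requires care because the map $y\mapsto e^{-2\pi G_{\Omega,x}(y)}$ has critical points. The maximum-principle inclusion $\Omega\setminus B_{\epsilon}(x)\subset\{G_{\Omega,x}<t_2(\epsilon)\}$ is routine but should be stated carefully since $G_{\Omega,x}$ is unbounded near $x$.
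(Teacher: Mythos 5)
Your proof is correct and takes essentially the same approach as the paper: both reduce the Dirichlet integral of $P_x(v)$ to a one-dimensional integral via the coarea formula together with Proposition \ref{proposition:properties of Green's function}(a)/(b), and both prove concentration from the localized version of this identity on the superlevel/sublevel sets of $G_{\Omega,x}$. The only differences are cosmetic (you pass through smooth $v$ and a density argument and rederive the inclusion $\Omega\setminus B_\epsilon(x)\subset\{G_{\Omega,x}<t_2(\epsilon)\}$ by the maximum principle, whereas the paper computes directly and invokes Proposition \ref{proposition:properties of Green's function}(e) for the analogous inclusion).
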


\begin{proof} \textit{Step 1.} We write $G=G_{\Omega,x}.$
Let $h$ be defined by $h(y)=e^{-2\pi G(y)}$ and hence $u(y)=v(h(y)).$ In particular
$$
  \nabla u(y)=v'(h(y))\,\nabla h(y).
$$
Note that, since $G\geq 0$ in $\Omega$ we get that if $y\in h^{-1}(t)\cap \Omega,$ then $t\in [0,1].$ Thus the coarea formula gives that
\begin{align*}
 \intomega |\nabla u|^2=&\intomega |v'(h(y))|^2 |\nabla h(y)|\,|\nabla h(y)| dy 
 \smallskip \\
 =&\int_0^1\left[\int_{h^{-1}(t)\cap \Omega}|v'(h(y))|^2|\nabla h(y)|d\sigma(y)\right]dt.
\end{align*}
Using that $|\nabla h(y)|=2\pi h(y)|\nabla G(y)|,$ gives
$$
  \intomega |\nabla u|^2=\int_0^1 2\pi t |v'(t)|^2\left[\int_{h^{-1}(t)\cap\Omega}|\nabla G(y)|d\sigma(y)\right]dt.
$$
Note that
$$
  h^{-1}(t)\cap \Omega=\left\{y\in\Omega\,\Big|\, G(y)=-\frac{1}{2\pi}\log(t)\right\}.
$$
Thus we obtain from Proposition \ref{proposition:properties of Green's function} (b) that
$$
  \int_{h^{-1}(t)\cap \Omega}|\nabla G(y)|d\sigma(y)=1\quad\forall\,t\in(0,1),
$$
which implies that
$$
  \int_{\Omega}|\nabla u|^2=\int_0^1|v'(t)|^2 2\pi t\,dt=\int_{B_1}|\nabla v|^2.
$$
This proves \eqref{lemma:eq:nablu u is nabla Pu}.
\smallskip

\textit{Step 2.}
Let us now assume that $\{v_i\}$ concentrates at $0$ and let $\epsilon>0$ be given. We know from Proposition \ref{proposition:properties of Green's function} (e), that for some $M>0$ big enough $\{G>M\}\subset B_{\epsilon}(x).$ Thus we obtain exactly as in Step 1 that
$$
  \int_{\Omega\backslash B_{\epsilon}(x)}|\nabla u_i|^2\leq \int_{\{G\leq M\}}|\nabla u_i|^2=\int_{e^{-2\pi M}}^1|v_i'(t)|^22\pi t\,dt.
$$
The right hand side goes to $0,$ since $v_i$ concentrates. This proves that $u_i$ concentrates too.
\end{proof}

\smallskip

We are now able to prove the main theorem.

\begin{proof}[Proof (Theorem \ref{theorem:ball to general domain:sup inequality}).]
We abbreviate again $G=G_{\Omega,0}.$
From Lemma \ref{lemma:ball to domain via G:preserves dirichlet norm} we know that $u\in\mathcal{B}_1(\Omega).$ Using the coarea formula we get
\begin{align*}
 F_{\Omega}(u)=&\intomega\frac{\big(e^{\alpha u^2}-1\big)}{|y|^{\beta}}\frac{|\nabla G(y)|}{|\nabla G(y)|}dy
 =\int_0^{\infty}\left[\int_{G^{-1}(t)\cap\Omega}\frac{(e^{\alpha u^2}-1)}{|y|^{\beta}|\nabla G(y)|}d\sigma(y)\right]dt
 \smallskip 
 \\
 =&\int_0^{\infty}\left(e^{\alpha v^2(e^{-2\pi t})}-1\right)
 \left[\int_{S_{r(t)}}\frac{1}{|y|^{\beta}|\nabla G(y)|}d\sigma(y)\right]dt,
\end{align*}
where $r(t)=e^{-2\pi t}$ and
$S_{r(t)}$ is defined as in Lemma
\ref{lemma:upper bound for conformal incenter with range of r}. That lemma therefore gives us that
\begin{align*}
  F_{\Omega}(u)\geq &I_{\Omega}(0)^{2-\beta}\int_0^{\infty}\frac{ e^{\alpha v^2(r(t))}-1}{r(t)^{\beta}} (2\pi r(t))^2 dt 
  \smallskip \\
  =&-I_{\Omega}(0)^{2-\beta}\int_0^{\infty}\frac{ e^{\alpha v^2(r(t))}-1}{r(t)^{\beta}} 2\pi r(t)\,r'(t) dt
  \smallskip 
  \\
  =&I_{\Omega}(0)^{2-\beta}\int_0^{1}\frac{ e^{\alpha v^2(r)}-1}{r^{\beta}} 2\pi r\,dr=I_{\Omega}(0)^{2-\beta}F_{B_1}(v).
\end{align*}
This proves the first claim of the theorem. 
The statement about the concentration follows directly from Lemma \ref{lemma:ball to domain via G:preserves dirichlet norm}.
\end{proof}

\section{Domain to Ball Construction}\label{section:domain to ball}

The aim of this section is to prove the following theorem. Recall that we assume $0\in\Omega.$

\begin{theorem}[Concentration Formula]
\label{theorem:concentration formula by domain to ball}
The following formula holds
$$
  F_{\Omega}^{\delta}(0)=I_{\Omega}(0)^{2-\beta}F_{B_1}^{\delta}(0).
$$
\end{theorem}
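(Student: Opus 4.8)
The plan is to prove the concentration formula $F_{\Omega}^{\delta}(0)=I_{\Omega}(0)^{2-\beta}F_{B_1}^{\delta}(0)$ as a combination of two inequalities. The inequality $F_{\Omega}^{\delta}(0)\geq I_{\Omega}(0)^{2-\beta}F_{B_1}^{\delta}(0)$ should follow directly from the ball-to-domain construction of Section~\ref{section:ball to domain}: given any sequence $\{v_i\}\subset W^{1,2}_{0,rad}(B_1)\cap\mathcal{B}_1(B_1)$ concentrating at $0$, Theorem~\ref{theorem:ball to general domain:sup inequality} produces $u_i=P_0(v_i)\in\mathcal{B}_1(\Omega)$ that also concentrates at $0$ and satisfies $F_{\Omega}(u_i)\geq I_{\Omega}(0)^{2-\beta}F_{B_1}(v_i)$; taking $\limsup$ over a sequence $\{v_i\}$ that realizes $F_{B_1}^{\delta}(0)$, and noting that by Schwarz symmetrization we may restrict the definition of $F_{B_1}^{\delta}(0)$ to radially decreasing (hence radial) concentrating sequences, gives $F_{\Omega}^{\delta}(0)\geq I_{\Omega}(0)^{2-\beta}F_{B_1}^{\delta}(0)$.

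The reverse inequality $F_{\Omega}^{\delta}(0)\leq I_{\Omega}(0)^{2-\beta}F_{B_1}^{\delta}(0)$ is the substantial direction, and it is precisely the content of Proposition~\ref{proposition:main domain to ball} as announced in the introduction (equation~\eqref{intro:eq reverse ineq:domain to ball}). So the proof I would write is: take a sequence $\{u_i\}\subset\mathcal{B}_1(\Omega)$ concentrating at $0$ with $\lim_{i\to\infty}F_{\Omega}(u_i)=F_{\Omega}^{\delta}(0)$ — which exists after passing to a subsequence, and which by Lemma~\ref{lemma new:sup over W12 same as over Cinfty} may be taken smooth and non-negative — and invoke Proposition~\ref{proposition:main domain to ball} to obtain a sequence $\{v_i\}$ on $B_1$ with
$$
  F_{\Omega}^{\delta}(0)=\lim_{i\to\infty}F_{\Omega}(u_i)\leq I_{\Omega}(0)^{2-\beta}\liminf_{i\to\infty}F_{B_1}(v_i)\leq I_{\Omega}(0)^{2-\beta}F_{B_1}^{\delta}(0),
$$
the last step using that the $v_i$ constructed there concentrate at $0$ so that $\liminf_i F_{B_1}(v_i)\leq F_{B_1}^{\delta}(0)$ by definition of the concentration level. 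Combining the two inequalities yields the claimed equality.

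The main obstacle is entirely inside Proposition~\ref{proposition:main domain to ball} (the "domain to ball" construction), which the introduction already flags as "long and technical." The delicate points there are: constructing the radial comparison functions $v_i$ on $B_1$ — morally by composing $u_i$ with the level-set structure of the Green's function $G_{\Omega,0}$, inverting the $P_0$ construction — while controlling the loss of Dirichlet energy (one needs $\|\nabla v_i\|_{L^2}\leq 1$, or at least $\to 1$, so that $v_i$ lies in $\mathcal{B}_1(B_1)$); establishing that the $v_i$ genuinely concentrate at $0$; and tracking the weight $|y|^{-\beta}$ through the change of variables, where the singularly weighted isoperimetric inequality (Theorem~\ref{theorem:upper bound for R by Greens function with weight}) and the asymptotics of the level sets $\{G_{\Omega,0}>t\}$ as approximately small disks of radius $I_{\Omega}(0)e^{-2\pi t}$ (Proposition~\ref{proposition:properties of Green's function}(e)) must be used in the correct direction to produce the factor $I_{\Omega}(0)^{2-\beta}$ rather than merely an inequality with the wrong constant. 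Since Proposition~\ref{proposition:main domain to ball} is proved separately, the proof of Theorem~\ref{theorem:concentration formula by domain to ball} itself is short: it is just the assembly of the two inequalities described above.
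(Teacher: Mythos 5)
Your proof is correct and follows the same structure as the paper's: the $\leq$ inequality comes from Lemma~\ref{lemma new:sup over W12 same as over Cinfty} together with Proposition~\ref{proposition:main domain to ball}, and the $\geq$ inequality from Theorem~\ref{theorem:ball to general domain:sup inequality}. You also correctly supply the radial-reduction detail (via Schwarz symmetrization, mirroring the argument in the proof of Theorem~\ref{theorem:supremum of FOmega on Ball}) that the paper leaves implicit when applying Theorem~\ref{theorem:ball to general domain:sup inequality} to a sequence realizing $F_{B_1}^{\delta}(0)$, which need not a priori be radial.
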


The proof of this result will be a consequence of the following proposition, which allows to construct a concentrating sequence in the ball from a given concentrating sequence in $\Omega.$

\begin{proposition}\label{proposition:main domain to ball}
Let $\{u_i\}\subset  \mathcal{B}_1(\Omega)\cap C^{\infty}(\Omega)$ be a sequence which concentrates at $0$ and is a maximizing sequence for $F_{\Omega}^{\delta}(0).$
Then  there exists a sequence $\{v_i\}\subset W_{0,rad}^{1,2}(B_1)\cap \mathcal{B}_1(B_1)$ concentrating at $0,$ such that
\begin{equation*}
  F_{\Omega}^{\delta}(0)=\lim_{i\to\infty}F_{\Omega}(u_i)\leq I_{\Omega}^{2-\beta}(0)\liminf_{i\to\infty}F_{B_1}(v_i).
\end{equation*}
\end{proposition}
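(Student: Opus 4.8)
The plan is to normalize the sequence and then "unfold" $u_i$ along the level sets of the Green's function $G:=G_{\Omega,0}$, producing a radial $v_i$ on $B_1$ by the construction inverse to $P_0$. By Lemma \ref{lemma new:sup over W12 same as over Cinfty} we may assume $u_i\in C^\infty(\Omega)\cap\mathcal{B}_1(\Omega)$ and $u_i\ge 0$; recall $G>0$ in $\Omega$ with $G\to+\infty$ at $0$. For $w\ge 0$ let $\nu_w(t):=\int_{\{w>t\}}|y|^{-\beta}\,dy$; the layer-cake identity $F_\Omega(w)=\int_0^\infty 2\alpha t\,e^{\alpha t^2}\,\nu_w(t)\,dt$ shows that $F_\Omega(w)$ depends on $w$ only through $\nu_w$. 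Put $\Psi(s):=\nu_G(s)$, a continuous strictly decreasing bijection of $[0,\infty)$ onto $(0,\int_\Omega|y|^{-\beta}\,dy]$; by Proposition \ref{proposition:properties of Green's function}(e), $\{G>s\}$ is an approximately small disk of radius $I_\Omega(0)e^{-2\pi s}$ as $s\to\infty$, so the coarea formula together with Proposition \ref{proposition:properties of Green's function}(b) gives $\int_{\{G=s\}}\frac{d\sigma}{|y|^\beta|\nabla G|}=4\pi^2 I_\Omega(0)^{2-\beta}e^{-2\pi(2-\beta)s}\,(1+\varepsilon(s))$ with $\varepsilon(s)\to 0$.

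I would then build a $G$-rearranged competitor: a function $\hat u_i=\phi_i\circ G$ ($\phi_i$ nondecreasing) designed so that $F_\Omega(\hat u_i)\ge F_\Omega(u_i)-o(1)$ while $\|\nabla\hat u_i\|_{L^2(\Omega)}\le 1$. The construction uses a cut level $t_i\to\infty$ separating a \emph{core} of $u_i$ near $0$ --- where $\{G>s\}$ is nearly a disk, so that the weighted isoperimetric inequality of Csat\'o \cite{Csato} can be applied with asymptotically sharp constant --- from the \emph{outer part}, for which one rearranges in a Dirichlet-energy-nonincreasing way and uses the concentration of $u_i$ to absorb the resulting loss in $F_\Omega$. (The exceptional levels --- critical values and plateaus of the smooth $u_i$ --- are dealt with by the standard Sard and approximation arguments; this bookkeeping, together with the precise choice of $t_i$, accounts for most of the technical length.) One then sets $v_i(z):=\phi_i\big(G_{B_1,0}(z)\big)=\phi_i\big(-\tfrac{1}{2\pi}\log|z|\big)\in W^{1,2}_{0,rad}(B_1)$, so that $\hat u_i=P_0(v_i)$; by Lemma \ref{lemma:ball to domain via G:preserves dirichlet norm} we get $\|\nabla v_i\|_{L^2(B_1)}=\|\nabla\hat u_i\|_{L^2(\Omega)}\le 1$, and, using Proposition \ref{proposition:properties of Green's function}(e) and the concentration of $u_i$, that $v_i$ concentrates at $0$.

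The comparison of functionals is the coarea computation from the proof of Theorem \ref{theorem:ball to general domain:sup inequality}, run backwards: $F_{B_1}(v_i)=4\pi^2\int_0^\infty\big(e^{\alpha\phi_i(s)^2}-1\big)e^{-2\pi(2-\beta)s}\,ds$ and $F_\Omega(\hat u_i)=\int_0^\infty\big(e^{\alpha\phi_i(s)^2}-1\big)\big[\int_{\{G=s\}}\frac{d\sigma}{|y|^\beta|\nabla G|}\big]\,ds$. Split the $s$-integral at a large fixed $M$. On $\{s\ge M\}$ the bracket is $\le(1+\delta(M))\,4\pi^2 I_\Omega(0)^{2-\beta}e^{-2\pi(2-\beta)s}$ with $\delta(M)\to 0$, so that piece of $F_\Omega(\hat u_i)$ is $\le(1+\delta(M))\,I_\Omega(0)^{2-\beta}F_{B_1}(v_i)$. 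On $\{s<M\}$ one shows $\phi_i\to 0$ uniformly on $[0,M]$ --- using $u_i\to 0$ in $L^2(\Omega)$ (hence in every $L^q$ and in $L^2(|y|^{-\beta}dy)$), monotonicity of $\phi_i$, and the coarea formula on the annular region $\{\,\delta<G<M\,\}$ --- so that piece is $\le\big(e^{\alpha(\sup_{[0,M]}\phi_i)^2}-1\big)\int_{\{0<G<M\}}|y|^{-\beta}\,dy\to 0$ as $i\to\infty$. With $F_\Omega(u_i)\le F_\Omega(\hat u_i)+o(1)$ from the construction, one obtains $F_\Omega(u_i)\le(1+\delta(M))\,I_\Omega(0)^{2-\beta}F_{B_1}(v_i)+o(1)$; letting $i\to\infty$ and then $M\to\infty$ yields the asserted inequality.

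The main obstacle is the design of $\hat u_i$ making both $F_\Omega(\hat u_i)\ge F_\Omega(u_i)-o(1)$ and $\|\nabla\hat u_i\|_{L^2(\Omega)}\le 1$ hold simultaneously. Schwarz symmetrization with respect to $|y|^{-\beta}\,dy$ keeps $F_\Omega$ exact but lands $v_i$ in a ball of radius set by $\int_\Omega|y|^{-\beta}$ rather than by $I_\Omega(0)$ --- far too lossy --- whereas the $\nu_w$-preserving $G$-rearrangement, also $F_\Omega$-preserving, does not obviously decrease the Dirichlet energy, since the one-dimensional inequality between $|\Psi'|$ and the squared weighted perimeter of $\{u_i=t\}$ runs the wrong way except in the asymptotic ($s\to\infty$) regime. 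Reconciling these --- via the cut level $t_i$, keeping the core loss $o(1)$ by near-disk-ness and the outer loss $o(1)$ by concentration, all while ensuring $v_i$ stays radial and concentrating --- is the delicate step, and is where our argument departs from, and repairs, Flucher's (cf.\ Remarks \ref{remark:Flucher point 4 mistake}, \ref{remark:Flucher thesis with si}).
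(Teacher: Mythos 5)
Your proposal and the paper's proof pursue genuinely different strategies, but yours has a gap that you identify yet do not repair, and I do not see how to repair it along the lines you sketch.

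The paper never constructs a ``$G$-rearranged competitor'' $\hat u_i = \phi_i\circ G$ on $\Omega$ and then sets $v_i = (P_0)^{-1}(\hat u_i)$. Instead it first modifies $u_i$ twice in $\Omega$ --- a harmonic replacement below a level $k_i\in[1,2]$ (Lemma \ref{lemma:main properties step 1-3 in Flucher}), which forces $\lambda_i u_i \to G_{\Omega,0}$ in $C^2_{loc}$, and a further truncation (Lemma \ref{lemma:the one we sent to Struwe}) producing levels $s_i$ at which $\{u_i\geq s_i\}$ are \emph{approximately small disks at $0$} --- and then builds $v_i$ directly on $B_1$ (Lemma \ref{main lemma si instead of 1. domain to ball}) as a hybrid: the Schwarz rearrangement of $u_i$ rescaled to fit inside $B_{\delta_i}$ with $\delta_i = e^{-2\pi t_i}$, glued to the explicit logarithmic cap $-\tfrac{s_i}{2\pi t_i}\log|x|$ outside. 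Critically, $P_0(v_i)\neq u_i$; the energy bound for $v_i$ comes from Proposition \ref{prop:Hardy littlewoood modified}(ii) on the inner part and from the capacity estimate
$\int_{\{u_i<s_i\}}|\nabla u_i|^2 \geq s_i^2/t_i$
on the outer part (Step 2 of the proof of Lemma \ref{main lemma si instead of 1. domain to ball}), which is exactly the Dirichlet energy of the logarithmic cap.

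Your plan hinges on producing $\hat u_i$ with $\hat u_i = P_0(v_i)$ --- i.e.\ $\hat u_i$ must be a function of $G$ --- together with $\|\nabla\hat u_i\|_{L^2}\leq 1$ and $F_\Omega(\hat u_i)\geq F_\Omega(u_i)-o(1)$. You correctly note that the $\nu_w$-preserving $G$-rearrangement keeps $F_\Omega$ exact but does not decrease the Dirichlet energy, and you propose a split at a cut level $t_i$ with ``core loss $o(1)$ by near-disk-ness and outer loss $o(1)$ by concentration.'' But this does not close the gap: the near-disk property of $\{G>s\}$ for large $s$ says nothing about the level sets $\{u_i>t\}$ of the \emph{original} $u_i$; the weighted isoperimetric inequality of Csat\'o controls the weighted perimeter of $\{u_i>t\}$ by its measure, which is the direction useful for Theorem \ref{theorem:ball to general domain:sup inequality}, not the direction needed to show the $G$-rearrangement does not increase $\int|\nabla u_i|^2$ on the core. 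For the outer part, any Dirichlet-energy-nonincreasing rearrangement (e.g.\ Schwarz) will in general \emph{not} be a function of $G$ unless $\Omega$ is a ball, so the resulting $\hat u_i$ is no longer $P_0$ of anything, and $v_i$ is undefined. Without the paper's two preparatory modifications that turn $\{u_i\geq s_i\}$ into an approximate small disk, and without the hybrid construction on $B_1$ paired with the capacity estimate, the proposal's central energy comparison is asserted rather than established. Your outline of Step 6 (the functional comparison once the construction is in place) is correct in spirit and close to the paper's, but the construction itself is the heart of the matter and is where your argument is incomplete.
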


\begin{proof}[Proof (Theorem \ref{theorem:concentration formula by domain to ball}).]
From Lemma \ref{lemma new:sup over W12 same as over Cinfty} and Proposition \ref{proposition:main domain to ball} we immediately obtain that 
$$
  F_{\Omega}^{\delta}(0)\leq I_{\Omega}^{2-\beta}(0)F_{B_1}^{\delta}(0).
$$
The reverse inequality follows from Theorem \ref{theorem:ball to general domain:sup inequality}.
\end{proof}
\smallskip

The proof of Proposition \ref{proposition:main domain to ball} is long and technical. We split it into many intermediate steps. To make the presentation less cumbersome, we assume in what follows that $0\in\Omega.$ However, we actually need this, and the fact that concentration occurs at $0,$ only in Step 6 in the proof of Lemma \ref{main lemma si instead of 1. domain to ball}. We start with too auxiliary lemmas.

\begin{lemma}
\label{lemma:limit over Bri same as Omega for subsequence}
Suppose $\{u_i\}\subset \mathcal{B}_1(\Omega)$ concentrates at $x_0\in\Omega$ and let $\{r_i\}\subset\re$ be  such that $r_i>0$ for all $i$ and $\lim_{i\to\infty}r_i=0.$ Then there exists a subsequence $u_{j_i}$ such that 
$$
  \lim_{i\to\infty}F_{\Omega}(u_i)=
  \lim_{i\to\infty}\int_{\Omega}\frac{e^{\alpha u_{i}^2}-1}{|x|^{\beta}}dx
  =\lim_{i\to\infty}\int_{B_{2r_i}(x_0)}\frac{e^{\alpha u_{j_i}^2}-1}{|x|^{\beta}}dx.
$$
Moreover any subsequence of $u_{j_i}$ will also satisfy the above equality.
\end{lemma}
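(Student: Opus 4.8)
The plan is to reduce the lemma to the single elementary fact that, for every \emph{fixed} radius $\rho>0$, the contribution of $\Omega\setminus B_{\rho}(x_0)$ to $F_{\Omega}(u_i)$ tends to $0$; a diagonal extraction over the shrinking radii $r_i$ then finishes the proof. To prove the fixed-radius fact I would fix $\rho>0$ so small that $\overline{B_{\rho}(x_0)}\subset\Omega$ and, as in the proof of Proposition \ref{proposition:if u_i concentrates somewhere else than zero}, choose a cutoff $\varphi\in C^{\infty}(\Omegabar)$ with $0\le\varphi\le1$, $\varphi\equiv0$ on $B_{\rho/2}(x_0)$ and $\varphi\equiv1$ on $\Omega\setminus B_{\rho}(x_0)$. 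Then $\varphi u_i\in W^{1,2}_0(\Omega)$, and from $|\nabla(\varphi u_i)|^2\le 2\varphi^2|\nabla u_i|^2+2|\nabla\varphi|^2 u_i^2$ together with the fact that $\varphi^2|\nabla u_i|^2$ vanishes on $B_{\rho/2}(x_0)$ one obtains
$$
  \|\nabla(\varphi u_i)\|_{L^2}^2\le 2\int_{\Omega\setminus B_{\rho/2}(x_0)}|\nabla u_i|^2+2\|\nabla\varphi\|_{L^{\infty}}^2\,\|u_i\|_{L^2}^2 .
$$
Both terms on the right go to $0$: the first since $\{u_i\}$ concentrates at $x_0\in\Omega$, the second since $u_i\to0$ in $L^2(\Omega)$ by \eqref{eq:properties of concentrating sequences}. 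Hence $\|\varphi u_i\|_{W^{1,2}}\to0$, so in particular $\varphi u_i\rightharpoonup0$ in $W^{1,2}(\Omega)$ and $\|\nabla(\varphi u_i)\|_{L^2}\le1/2$ for $i$ large; Lemma \ref{lemma:compactness in the interior}, applied with its parameter equal to $1/2$, then gives $F_{\Omega}(\varphi u_i)\to F_{\Omega}(0)=0$. Since the integrand of $F_{\Omega}$ is nonnegative and $\varphi u_i\equiv u_i$ on $\Omega\setminus B_{\rho}(x_0)$,
$$
  0\le\int_{\Omega\setminus B_{\rho}(x_0)}\frac{e^{\alpha u_i^2}-1}{|x|^{\beta}}\,dx\le F_{\Omega}(\varphi u_i)\longrightarrow 0\qquad\text{for every fixed }\rho>0 .
$$

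For the diagonal step, set $L:=\lim_i F_{\Omega}(u_i)$ (the first equality in the statement is merely the definition of $F_{\Omega}$; this limit exists in all applications of the lemma, and otherwise one passes to a subsequence first). For each $i$, apply the fixed-radius estimate with $\rho=2r_i$ to obtain $N_i\in\mathbb{N}$ with $\int_{\Omega\setminus B_{2r_i}(x_0)}\frac{e^{\alpha u_j^2}-1}{|x|^{\beta}}\,dx\le 1/i$ for all $j\ge N_i$, and choose $j_1<j_2<\cdots$ with $j_i\ge N_i$. Using $\int_{B_{2r_i}(x_0)}=\int_{\Omega}-\int_{\Omega\setminus B_{2r_i}(x_0)}$ and $F_{\Omega}(u_{j_i})\to L$,
$$
  \Bigl|\int_{B_{2r_i}(x_0)}\frac{e^{\alpha u_{j_i}^2}-1}{|x|^{\beta}}\,dx-L\Bigr|\le \bigl|F_{\Omega}(u_{j_i})-L\bigr|+\frac1i\longrightarrow 0 ,
$$
which is the asserted equality. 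Finally, any subsequence $u_{j_{i_k}}$ still satisfies $F_{\Omega}(u_{j_{i_k}})\to L$ and inherits the tail bound $1/i_k\to0$, so the same estimate applies to it; this gives the ``moreover'' part.

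The only genuine difficulty is that the inner radius $2r_i$ shrinks to $0$, which prevents applying Lemma \ref{lemma:compactness in the interior} directly on one fixed annulus; this is why I would build the quantitative rate $1/i$ into the fixed-radius estimate and extract $\{j_i\}$ diagonally, and that same quantitative bound is precisely what makes the conclusion stable under passing to further subsequences. Everything else — the cutoff computation and the invocation of Lemma \ref{lemma:compactness in the interior} — is routine and mirrors the argument already used for Proposition \ref{proposition:if u_i concentrates somewhere else than zero}.
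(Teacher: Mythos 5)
Your proof is correct, and it rests on exactly the same ingredients as the paper's proof: a cutoff supported away from $x_0$, the concentration property plus $u_i\to 0$ in $L^2$ from \eqref{eq:properties of concentrating sequences} to control the Dirichlet norm of the cutoff sequence, Lemma~\ref{lemma:compactness in the interior}, and a diagonal extraction. The organization is slightly different, though. The paper fuses the cutoff and the diagonalization into one step: it uses radius-dependent cutoffs $\eta_i$ vanishing on $B_{r_i}(x_0)$, chooses $j_i$ so that $\|\nabla(\eta_i u_{j_i})\|_{L^2}^2\le 2^{-i}$, and then invokes Lemma~\ref{lemma:compactness in the interior} once on the single diagonal sequence $v_i=\eta_i u_{j_i}\to 0$ in $W^{1,2}_0(\Omega)$. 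You instead first isolate the fixed-radius fact $\int_{\Omega\setminus B_{\rho}(x_0)}\frac{e^{\alpha u_i^2}-1}{|x|^{\beta}}\to 0$ for each fixed $\rho>0$ (one application of Lemma~\ref{lemma:compactness in the interior} per $\rho$), and only afterwards diagonalize by a purely quantitative argument with the explicit rate $1/i$. What this modular version buys is mainly transparency for the ``moreover'' clause: the bound $1/i$ carries over verbatim to any subsequence, whereas in the paper's version the analogous stability is implicit in the $2^{-i}$ Dirichlet bound and the $L^1$ convergence of the diagonal sequence. One small remark applying equally to both proofs: Lemma~\ref{lemma:compactness in the interior} as stated gives convergence only along a subsequence, so to conclude convergence of the full sequence $F_{\Omega}(\varphi u_i)\to 0$ you are tacitly using the subsequence principle (every subsequence of $\varphi u_i$ inherits the hypotheses and hence has a further subsequence with the same limit $0$); it would be worth a half-sentence to make this explicit.
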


\begin{proof}
Define for each $i$ the functions $\eta_i\in W^{1,\infty}(\re^2)$ by
$$
  \eta_i(x)=\left\{\begin{array}{rl}
                    1 & \text{ if }x\in\re^2\backslash B_{2r_i}(x_0)
                    \smallskip \\
                    \frac{|x-x_0|}{ r_i} -1 &\text{ if }x\in B_{2r_i}(x_0)\setminus B_{r_i}(x_0)
                    \smallskip \\
                    0&\text{ if }x\in B_{r_i}(x_0).
                   \end{array}\right.
$$
Note that $|\nabla\eta_i|^2=1/r_i^2$ in $B_{2r_i}(x_0)\setminus B_{r_i}(x_0).$ We obtain for all $i,j$ that
\begin{align*}
 \label{eq:nabla etai u i L2 norm}
 \intomega|\nabla(\eta_iu_j)|^2\leq 2\intomega\left(|\nabla \eta_i|^2|u_j|^2+|\eta_i|^2|\nabla u_j|^2\right)=A_i(u_j)+B_i(u_j),
\end{align*}
where
$$
  A_i(u_j)=\frac{2}{r_i^2}\int_{B_{2r_i}(x_0)\setminus B_{r_i}(x_0)}|u_j|^2, \qquad B_i(u_j)=2\int_{\Omega\backslash B_{r_i}(x_0)}|\eta_i|^2|\nabla u_j|^2.
$$
Since $|\eta_i|\leq 1,$ $u_j$ concentrates at $x_0$ and  $u_j\to 0$ in $L^2,$ we get that for any fixed $i$ the following convergences hold true
$$
  \lim_{j\to\infty}A_i(u_j)=0\quad\text{ and }\quad\lim_{j\to\infty}B_i(u_j)=0.
$$
We can therefore chose a subsequence $u_{j_i}$ such that 
$$
  A_i(u_{j_i})+B_i(u_{j_i})\leq \frac{1}{2^i}.
$$
We finally set $v_i=\eta_iu_{j_i}\in W_0^{1,2}(\Omega),$ which satisfies by construction $\lim_{i\to\infty}\intomega |\nabla v_i|^2=0.$
We obtain that
$v_i\to 0$ in $W_0^{1,2}(\Omega).$
We can now apply Lemma \ref{lemma:compactness in the interior} to obtain that 
$$
\frac{e^{\alpha v_i^2}-1}{|x|^{\beta}}\to 0\quad\text{ in }L^1(\Omega).
$$
In particular, recalling that $\eta_i=1$ on $\re^2\backslash B_{2r_i}(x_0),$ we get
$$
  0=\lim_{i\to\infty}\int_{\Omega\backslash B_{2r_i}(x_0)} \frac{e^{\alpha v_i^2}-1}{|x|^{\beta}}=
  \lim_{i\to\infty}\int_{\Omega\backslash B_{2r_i}(x_0)} \frac{e^{\alpha u_{j_i}^2}-1}{|x|^{\beta}}.
$$
From this the statement of the lemma follows immediately.
\end{proof}
\smallskip

We will frequently use the following elementary Lemma.

\begin{lemma}
\label{lemma:a.e. convergence and bounded by 1}
Suppose $\{u_i\}$ is a sequence of measurable functions such that $u_i\to 0$ almost everywhere in $\Omega.$ Let $\{s_i\}\subset\re$ be a bounded sequence.
Then
$$
  \lim_{i\to\infty}\int_{\{u_i\leq s_i\}}\frac{e^{\alpha u_i^2}-1}{|x|^{\beta}}=0.
$$
\end{lemma}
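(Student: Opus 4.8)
The statement is elementary and the plan is to deduce it from the dominated convergence theorem applied to the functions
$f_i := \chi_{\{u_i\le s_i\}}\,\dfrac{e^{\alpha u_i^2}-1}{\dxb}$ on $\Omega$.

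First I would assemble the ingredients needed for an integrable dominating function. Since $\beta\in[0,2)$ and $\Omega$ is bounded, the weight $1/\dxb$ lies in $L^1(\Omega)$. Since $\{s_i\}$ is bounded, fix $M\ge 0$ with $|s_i|\le M$ for every $i$. On the set $\{u_i\le s_i\}$ one then has $u_i\le M$; recalling that in all the applications of this lemma the relevant sequences are non-negative (cf. Lemma \ref{lemma new:sup over W12 same as over Cinfty}), so that we may assume $u_i\ge 0$, this gives $0\le u_i^2\le M^2$ on $\{u_i\le s_i\}$. Hence, for a.e. $x\in\Omega$ and every $i$,
$0\le f_i(x)\le \dfrac{e^{\alpha M^2}-1}{\dxb}=:g(x)$, and $g\in L^1(\Omega)$ independently of $i$.

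Next I would verify pointwise convergence $f_i\to 0$ a.e. At a.e. $x\in\Omega$ we have $u_i(x)\to 0$, hence $e^{\alpha u_i(x)^2}-1\to 0$; since $0\le\chi_{\{u_i\le s_i\}}(x)\le 1$, the product $f_i(x)$ is squeezed to $0$. With the uniform dominating function $g$ in hand, the dominated convergence theorem yields $\int_\Omega f_i\to 0$, which is precisely the claim.

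The only subtle point — and essentially the only thing to be careful about — is the construction of the dominating function: it is crucial that $\{s_i\}$ be bounded, since otherwise $u_i^2$ need not be controlled on $\{u_i\le s_i\}$ and the integrand can blow up there; and (equivalently, in practice) that the $u_i$ be non-negative, so that $u_i\le M$ forces $u_i^2\le M^2$. Granting these, the argument is entirely routine.
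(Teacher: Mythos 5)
Your proof is correct and takes essentially the same approach as the paper's: dominated convergence applied to $f_i=\chi_{\{u_i\le s_i\}}\,\frac{e^{\alpha u_i^2}-1}{|x|^{\beta}}$, with an $L^1(\Omega)$ dominating function of the form $C/|x|^{\beta}$ obtained from $\beta<2$ and the boundedness of $\{s_i\}$. Your added observation that the bound $u_i^2\le M^2$ on $\{u_i\le s_i\}$ genuinely requires non-negativity of $u_i$ (not stated in the lemma, but satisfied in every application) makes explicit a point the paper leaves unaddressed when it merely asserts $|f_i|\le g$ for some $g\in L^1(\Omega)$.
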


\begin{proof}
Define the function $f_i$ by
$$
  f_i(x)=\frac{e^{\alpha u_i^2}-1}{|x|^{\beta}}\chi_i(x),
$$
where $\chi_i$ is the characteristic function of the set $\{u_i\leq s_i\}.$ Note that, since $\beta<2,$ we have that 
$|f_i|\leq g,$ for some $g\in L^1(\Omega).$ Therefore by dominated convergence theorem we get that $\lim_{i\to\infty}\int_{\Omega}f_i=0.$
\end{proof}

\begin{lemma}
\label{lemma:the capacity argument}
Suppose $\{u_i\}\subset\mathcal{B}_1(\Omega)\cap C^{\infty}(\Omega)$ concentrates at $0\in\Omega$ and satisfies
\begin{equation}
 \label{eq:lemma:capacity argument maxim seq.}
  \lim_{i\to\infty}F_{\Omega}(u_i)=F_{\Omega}^{\delta}(0).
\end{equation}
Then for any $r>0$ there exists $j\in\mathbb{N}$ and $k_j\in [1,2]$ such that 
\begin{equation}
 \label{eq:lemma capacity arg. not empty Br}
  \{u_j\geq k_j\}\cap B_r(0)\neq \emptyset
\end{equation}
and
all connected components $A$ of $\{u_j\geq k_j\}$ will have the property:
\begin{equation}
 \label{eq:lemma:capacity arg. Br and B2r}
  \text{If }\;A\cap B_r(0)\neq\emptyset\quad\text{ then }\quad A\subset B_{2r}(0).
\end{equation}
Moreover $A$ has smooth boundary.
\end{lemma}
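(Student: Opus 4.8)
The plan is to argue by contradiction. Suppose the conclusion fails for some fixed $r>0$; then for every $j$ and every $k\in[1,2]$ at least one of the two conditions fails. The idea, following Flucher, is to use a capacity/co-area argument on the ``annular'' region $\{1\leq u_i\leq 2\}$ (or rather on level sets of $u_i$ in that range) to show that if no good level $k_j$ exists, then the Dirichlet energy of $u_i$ that is ``used up'' near the origin is forced to leak out to the boundary of $B_{2r}(0)$, contradicting concentration at $0$. Concretely, I first observe that since $u_i$ concentrates at $0$, for $i$ large $u_i$ must be large somewhere in $B_r(0)$: indeed if $\sup_{B_r(0)}u_i$ stayed bounded by $1$ along a subsequence, then by Lemma \ref{lemma:a.e. convergence and bounded by 1} (with $s_i\equiv 1$, after noting $u_i\to 0$ a.e.\ by \eqref{eq:properties of concentrating sequences}) the contribution of $B_r(0)$ to $F_\Omega(u_i)$ would vanish, while the contribution of $\Omega\setminus B_r(0)$ vanishes by the argument of Proposition \ref{proposition:if u_i concentrates somewhere else than zero} (localize with a cutoff, apply Lemma \ref{lemma:compactness in the interior}); together these would give $F_\Omega^\delta(0)=\lim F_\Omega(u_i)=0$, contradicting Remark \ref{remark:F delta zero is not zero}. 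Hence for $i$ large, $\max_{\overline{B_r(0)}}u_i\geq 2$, so $\{u_i\geq k\}\cap B_r(0)\neq\emptyset$ for every $k\in[1,2]$; this secures \eqref{eq:lemma capacity arg. not empty Br} for all large $i$ and all $k\in[1,2]$.

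Next I would fix a large index $j$ and suppose, toward a contradiction, that for \emph{every} $k\in[1,2]$ some connected component $A_k$ of $\{u_j\geq k\}$ meets $B_r(0)$ but is not contained in $B_{2r}(0)$, i.e.\ $A_k$ reaches $\partial B_{2r}(0)$. On such a component the function $u_j-k$ is a nonnegative $W^{1,2}$ function that equals $0$ on $\partial A_k$ and is at least $\ldots$ well, one uses that $u_j\geq k$ on $A_k$ together with the fact that $A_k$ connects $\overline{B_r(0)}$ to $\partial B_{2r}(0)$: truncating $u_j$ between levels $k$ and $k+1$ (staying in $[1,2]$ requires working on $[1, 3/2]$, say) produces a test function for the capacity of the condenser $(\overline{B_r(0)},\, \complement B_{2r}(0))$ with boundary values differing by $1$, whence
$$
  \int_{\{k\leq u_j\leq k+1\}}|\nabla u_j|^2 \;\geq\; \mathrm{cap}\big(\overline{B_r(0)}, \complement B_{2r}(0)\big)\;=\;\frac{2\pi}{\log 2}\;=:\;c_0>0.
$$
Since the level sets for $k$ ranging over a subinterval of $[1,2]$ of length bounded away from $0$ are essentially disjoint, summing (or integrating the co-area inequality) over, say, $k\in\{1, 3/2\}$ would force $\int_\Omega|\nabla u_j|^2\geq 2c_0$ once we iterate — but more to the point, the relevant energy all sits inside $\{u_j\geq 1\}$, and this set together with its escape to $\partial B_{2r}(0)$ forces a definite amount of Dirichlet energy outside any fixed small ball $B_\rho(0)$ with $\rho<r$. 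That contradicts $\lim_i\int_{\Omega\setminus B_\rho(0)}|\nabla u_i|^2=0$ for $i$ large. Therefore for $j$ large there must exist $k_j\in[1,2]$ (in fact $k_j\in[1,3/2]$) such that \emph{every} component of $\{u_j\geq k_j\}$ that meets $B_r(0)$ lies in $B_{2r}(0)$, which is \eqref{eq:lemma:capacity arg. Br and B2r}.

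Finally, the smoothness of $\partial A$: since $u_j\in C^\infty(\Omega)$, by Sard's theorem almost every value is a regular value of $u_j$, so I would choose $k_j$ to be additionally a regular value of $u_j$ — the set of ``bad'' levels $k\in[1,2]$ violating the capacity conclusion has, by the above, empty interior only a priori, so one must be slightly careful: the cleanest route is to show the capacity estimate forces the set of bad $k\in[1,3/2]$ to have measure less than $1/2$ (quantifying the co-area sum against the uniform bound $\|\nabla u_j\|_{L^2}^2\leq 1$), hence its complement within $[1,3/2]$ has positive measure and so meets the full-measure set of regular values of $u_j$; pick $k_j$ there. Then $\{u_j=k_j\}$ is a smooth $1$-manifold and each connected component $A$ of $\{u_j\geq k_j\}$ is a smooth domain. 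I expect the main obstacle to be the bookkeeping in this last quantitative step — namely extracting from the co-area formula a bound of the form $|\{k\in[1,3/2]:\ k\text{ is bad}\}|\cdot c_0 \leq \|\nabla u_j\|_{L^2}^2 \leq 1$, so that a good \emph{and} regular level survives — together with making precise the capacity lower bound for a component that merely ``reaches'' $\partial B_{2r}(0)$ rather than crossing it (handled by extending $u_j$ by $0$ outside $\Omega$ and using that concentration at $0$ puts $\partial B_{2r}(0)$ well inside $\Omega$ for the relevant small $r$, or by a connectedness/boundary argument).
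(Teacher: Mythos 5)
The key step in your argument --- the capacity bound $\int_{\{k\leq u_j\leq k+1\}}|\nabla u_j|^2 \geq \mathrm{cap}\bigl(\overline{B_r(0)},\complement B_{2r}(0)\bigr)$ --- is not correct. For the truncation $w=\min\{1,\max\{0,u_j-k\}\}$ to be a competitor for that condenser you would need $u_j\geq k+1$ on all of $\overline{B_r(0)}$ and $u_j\leq k$ on all of $\complement B_{2r}(0)$; neither follows from the contradiction hypothesis, which only supplies a \emph{single} connected component of $\{u_j\geq k\}$ meeting both $B_r(0)$ and $\complement B_{2r}(0)$. So the gap is not the ``reaching versus crossing'' subtlety you flag at the end --- the truncation simply is not a test function for that capacity problem at all. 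What a spanning level component does force is the much weaker but sufficient bound $\int_{\Omega\setminus B_r}|\nabla u_j|^2\geq c(r,\Omega)>0$, and the contradiction is then with concentration at $0$, which drives this quantity to zero. Once one has that bound at the single fixed level $k=1$, the entire apparatus of varying $k$, co-area sums, and measuring a bad set of levels becomes unnecessary: every component of $\{u_j\geq k\}$ for $k\geq 1$ sits inside a component of $\{u_j\geq 1\}$, so the containment property for $k=1$ propagates to all $k\geq 1$, and Sard's theorem then supplies a regular value $k_j\in[1,2]$ making $\partial A$ smooth.

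The paper obtains the required energy lower bound by an elementary Fubini/Cauchy--Schwarz argument. Supposing for all $j$ that some component $D_j$ of $\{u_j\geq 1\}$ contains $a\in B_r$ and $b\in\Omega\setminus B_{2r}$, rotate so $b=(b_1,0)$ with $b_1\geq 2r$. By path-connectedness of $D_j$, for every $x_1\in[r,2r]$ there is $x_2$ with $u_j(x_1,x_2)\geq 1$. Extending $u_j$ by $0$ outside $\Omega\subset[-M,M]^2$ and integrating $\partial_{x_2}u_j$ along vertical segments gives, via Cauchy--Schwarz, $1\leq 2M\int_{-M}^{M}|\nabla u_j(x_1,s)|^2\,ds$ for each such $x_1$; integrating over $x_1\in[r,2r]$ yields $r\leq 2M\int_{\Omega\setminus B_r}|\nabla u_j|^2$, which tends to $0$ by concentration --- contradiction. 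One further small slip in your first paragraph: ruling out $\sup_{B_r(0)}u_i\leq 1$ along a subsequence only gives $\sup_{B_r(0)}u_i>1$ for large $i$, not $\max_{\overline{B_r(0)}}u_i\geq 2$; to reach $2$ you need to rerun the same vanishing argument with threshold $2$ (Lemma \ref{lemma:a.e. convergence and bounded by 1} with $s_i\equiv 2$), which is easy but should be said.
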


\begin{proof} It is sufficient to prove that \eqref{eq:lemma capacity arg. not empty Br} and \eqref{eq:lemma:capacity arg. Br and B2r} hold with $k_j=1$ for some $j\in\mathbb{N}.$ This implies that \eqref{eq:lemma capacity arg. not empty Br} and \eqref{eq:lemma:capacity arg. Br and B2r} also hold for any $k\geq 1,$ and hence, using Sard's theorem, one can choose $k_j\in [1,2]$ appropriately such that $A$ has smooth boundary in addition.

First note that for all $n\in\mathbb{N}$ there exists a $j\geq n$ such that
\eqref{eq:lemma capacity arg. not empty Br} must hold. If this is not the case, then Lemma \ref{lemma:limit over Bri same as Omega for subsequence} and Lemma \ref{lemma:a.e. convergence and bounded by 1} imply that
$$
  \lim_{i\to\infty}F_{\Omega}(u_i)\leq \lim_{i\to \infty}\int_{B_r} \frac{e^{\alpha u_i^2}-1}\dxb 
  \leq\lim_{i\to\infty}\int_{\{u_i\leq 1\}}\frac{e^{\alpha u_i^2}-1}\dxb=0,
$$
which is a contradiction to \eqref{eq:lemma:capacity argument maxim seq.} (Recall that $F_{\Omega}^{\delta}(0)>0,$ see Remark \ref{remark:F delta zero is not zero}).

Suppose now that \eqref{eq:lemma:capacity arg. Br and B2r} does not hold. We show that this leads to a contradiction. In that case there exists for all $j\in\mathbb{N}$ a connected component $D_j$ of $\{u_j\geq 1\}$ and $a,b\in\Omega$ such that
$$
  a\in D_j\cap B_r\quad\text{ and }\quad b\in D_j\cap \Omega\backslash B_{2r}.
$$
For what follows we fix $j$ and omit the explicit dependence on $j$ (Note that $a$ and $b$ depend on $j$). Without loss of generality we can assume, by rotating the domain, that  $b=(b_1,0)$ and $b_1\geq 2r.$
Since $D_j$ is connected, for all $x_1\in [r,2r]$ there exists a $x_2\in\re$ such that $x=(x_1,x_2)\in D_j.$ In particular $u_j(x)\geq 1.$
Since $\Omega$ is bounded, there exists an $M>0,$ which is independent of the rotation of the domain (and hence of $j$), such that
$$
  \Omega\subset[-M,M]\times [-M,M]=[-M,M]^2.
$$
Let us extend $u_j$ by zero in $[-M,M]^2\backslash\Omega.$ We obtain in this way (using H\"older inequality in the last inequality) that for any $x_1\in[r,2r]$
\begin{align*}
 1\leq& u_j(x)=u_j(x_1,x_2)-u_j(x_1,-M) =\int_{-M}^{x_2}\frac{\partial u_j}{\partial x_2}(x_1,s)ds 
 \smallskip \\
 &\leq \int_{-M}^M\left|\frac{\partial u_j}{\partial x_2}(x_1,s)\right|ds 
 \leq \sqrt{2M}\left(\int_{-M}^M|\nabla u_j(x_1,s)|^2 ds\right)^{\frac{1}{2}}.
\end{align*}
Taking the square of the previous inequality and integrating  $x_1$ from $r$ to $2r$ gives
$$
  r\leq 2M\int_{r}^{2r}\int_{-M}^{M}|\nabla u_j(x_1,s)|^2 ds dx_1\leq 2M\int_{\Omega\backslash B_r}|\nabla u_j|^2.
$$
But this cannot hold true for all $j,$ since $u_j$ concentrates at $0$.
\end{proof}
\smallskip

The next lemma is about the first modification of the the sequence $\{u_i\}$ given in Proposition \ref{proposition:main domain to ball}.

\begin{lemma}
\label{lemma:main properties step 1-3 in Flucher}
Let $\{u_i\}\subset \mathcal{B}_1(\Omega)\cap C^{\infty}(\Omega) $ be a sequence which concentrates at $0\in\Omega$ and satisfies
$$
  \lim_{i\to\infty}F_{\Omega}(u_i)=F_{\Omega}^{\delta}(0).
$$
Then there exists  a sequence $\{v_i\}\subset \mathcal{B}_1(\Omega)$ and sequences $r_i>0,$ with $r_i\to 0$  and  $\{k_i\}\in [1,2]$
 such that 
\begin{align*}
 \{v_i\geq k_i\}\subset B_{2r_i},\quad \Delta v_i=0\quad\text{in }\{v_i<k_i\}.
\end{align*}
Moreover $v_i$ has the properties: there exist a sequence $\{\lambda_i\}\subset\re,$ $\lambda_i>0$ such that  
\begin{align*}
 &\text{(i)}\quad \lim_{i\to\infty}\lambda_i=\infty 
 \smallskip \\
 &\text{(ii)}\quad \lim_{i\to\infty}v_i(y)=0\quad\text{ for all $y$ in }\Omega\backslash\{0\} 
 \smallskip \\
 &\text{(iii)}\quad \lambda_i v_i\to G_{\Omega,0}\quad\text{in }C^2_{\text{loc}}(\Omega\backslash \{0\})
 \smallskip \\
 &\text{(iv)} \quad\lim_{i\to\infty}F_{\Omega}(v_i)=F_{\Omega}^{\delta}(0).
\end{align*}
\end{lemma}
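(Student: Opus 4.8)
The plan is to construct $v_i$ from $u_i$ in two modifications, following the strategy of Flucher's Steps 1--3 but adapted to the weighted functional. First, using Lemma~\ref{lemma:the capacity argument}, for each $n\in\mathbb{N}$ pick $r=r_n\to 0$ (say $r_n=1/n$) and obtain an index $j_n$ and a level $k_{j_n}\in[1,2]$ such that $\{u_{j_n}\geq k_{j_n}\}\cap B_{r_n}\neq\emptyset$ and every connected component of $\{u_{j_n}\geq k_{j_n}\}$ meeting $B_{r_n}$ is contained in $B_{2r_n}$ and has smooth boundary. Relabel so that this subsequence is again called $\{u_i\}$ (permissible by the last sentence of Lemma~\ref{lemma:limit over Bri same as Omega for subsequence}, which guarantees passing to a subsequence does not destroy $\lim F_\Omega(u_i)=F_\Omega^\delta(0)$), and write $k_i:=k_{j_i}$, $r_i:=r_{j_i}$. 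Let $U_i$ be the union of those components of $\{u_i\geq k_i\}$ that meet $B_{r_i}$; then $\emptyset\neq U_i\subset B_{2r_i}$, $\partial U_i$ is smooth, and $u_i=k_i$ on $\partial U_i$.

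Second, define $v_i$ by harmonic replacement outside $U_i$: set $v_i=u_i$ on $\overline{U_i}$ and let $v_i$ solve $\Delta v_i=0$ in $\Omega\setminus\overline{U_i}$ with $v_i=u_i$ on $\partial U_i\cup\partial\Omega$, i.e.\ $v_i=k_i$ on $\partial U_i$ and $v_i=0$ on $\partial\Omega$; by the maximum principle $0\le v_i\le k_i$ in $\Omega\setminus U_i$, so $\{v_i\geq k_i\}\subset \overline{U_i}\subset B_{2r_i}$ and $\Delta v_i=0$ in $\{v_i<k_i\}$ (up to the harmless measure-zero set $\partial U_i$; one can absorb it by taking the level slightly below $k_i$, or simply note $\{v_i<k_i\}\subset\Omega\setminus\overline{U_i}$). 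Dirichlet's principle gives $\|\nabla v_i\|_{L^2(\Omega)}\le\|\nabla u_i\|_{L^2(\Omega)}\le 1$, so $v_i\in\mathcal{B}_1(\Omega)$; and since we only decreased the energy on a region where $u_i\le k_i\le 2$, while $v_i=u_i$ on $\overline{U_i}$ and $\{v_i\le 2\}\supset\Omega\setminus U_i$, Lemma~\ref{lemma:a.e. convergence and bounded by 1} (applied with $s_i\equiv 2$, after checking $v_i\to 0$ a.e., which follows since $v_i\le$ the bounded harmonic function with the same boundary data as $u_i$ off $B_{2r_i}$ and $u_i\to 0$ a.e.) shows the integral over $\{v_i\le 2\}$ is negligible, whence
$$
  \lim_{i\to\infty}F_\Omega(v_i)=\lim_{i\to\infty}\int_{U_i}\frac{e^{\alpha v_i^2}-1}{|x|^\beta}
  =\lim_{i\to\infty}\int_{U_i}\frac{e^{\alpha u_i^2}-1}{|x|^\beta}
  =\lim_{i\to\infty}F_\Omega(u_i)=F_\Omega^\delta(0),
$$
where the last equality uses Lemma~\ref{lemma:limit over Bri same as Omega for subsequence} with the chosen $r_i$ together with Lemma~\ref{lemma:a.e. convergence and bounded by 1} again to discard $\{u_i\le 1\}$; this is property~(iv). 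Property~(ii) follows because for $y\neq 0$ fixed and $i$ large $y\in\Omega\setminus B_{2r_i}$, where $v_i$ is the harmonic extension of boundary data tending to $0$ (together with concentration $u_i\rightharpoonup 0$), so $v_i(y)\to 0$ locally uniformly by standard elliptic estimates.

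For the normalization: since $v_i$ is harmonic in $\Omega\setminus\overline{U_i}$ with $\overline{U_i}\subset B_{2r_i}\to\{0\}$ and $v_i\equiv k_i\in[1,2]$ on $\partial U_i$, compare $v_i$ with $k_i$ times the capacity potential of $U_i$ in $\Omega$; letting $\lambda_i$ be chosen so that, say, $\lambda_i\,v_i\to G_{\Omega,0}$, the natural choice is $\lambda_i=\bigl(\int_\Omega|\nabla v_i|^2\bigr)^{-1}$ or, more robustly, $\lambda_i$ fixed by $\lambda_i v_i(y_0)=G_{\Omega,0}(y_0)$ at one fixed point $y_0\neq 0$; then the Harnack/elliptic-estimate machinery (as in Flucher) gives $\lambda_i\to\infty$ (property~(i), because $v_i\to0$ pointwise off $0$ while $G_{\Omega,0}>0$) and $\lambda_i v_i\to G_{\Omega,0}$ in $C^2_{\mathrm{loc}}(\Omega\setminus\{0\})$ (property~(iii)); the limit is forced to be $G_{\Omega,0}$ because it is a nonnegative harmonic function in $\Omega\setminus\{0\}$ vanishing on $\partial\Omega$ with the normalized flux $\int_{\partial U_i}\partial_\nu v_i\,d\sigma$ converging (after scaling by $\lambda_i$) to $1$, by Proposition~\ref{proposition:properties of Green's function}(b)--type flux bookkeeping. \textbf{The main obstacle} I anticipate is precisely this last point: showing rigorously that $\lambda_i v_i$ converges to the Green's function (not merely to \emph{some} harmonic function), which requires controlling the flux of $v_i$ through small circles around $0$ and ruling out loss of mass — i.e.\ proving that the "charge'' of the replaced region $U_i$ is asymptotically concentrated and correctly normalized. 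This is the technical heart (Flucher's Steps 2--3), and the weight $|x|^{-\beta}$ does not interfere here since $U_i\to\{0\}$, so the argument should parallel Flucher's with elliptic estimates, Harnack, and the removable-singularity characterization of $G_{\Omega,0}$.
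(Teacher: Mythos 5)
Your construction follows exactly the paper's two-stage strategy: (1) pass to a subsequence via Lemma~\ref{lemma:limit over Bri same as Omega for subsequence}, apply Lemma~\ref{lemma:the capacity argument} to obtain $r_i$, $k_i$, and the union $A_i$ of components of $\{u_i\geq k_i\}$ meeting $B_{r_i}$; (2) harmonically replace $u_i$ outside $A_i$; (3) verify (i)--(iv). For (iv) both arguments use Lemma~\ref{lemma:a.e. convergence and bounded by 1} to discard the region where $v_i\le k_i$, and this part is fine. The genuine gap is precisely where you say you expect one: the verification of (i) and (iii). You propose the right normalization, $\lambda_i v_i(y_0)=G_{\Omega,0}(y_0)$ at a fixed $y_0\neq 0$, but you leave the identification of the limit unproven, and you suggest it requires "flux bookkeeping" through Proposition~\ref{proposition:properties of Green's function}(b). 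It does not. The paper's Step~3 is shorter: with that normalization, Harnack's inequality on nested compact subsets of $\Omega\setminus\{0\}$ gives uniform $C^0$ bounds on $\lambda_i v_i$; interior Schauder estimates (the $v_i$ are harmonic there) upgrade these to uniform $C^{2,\alpha}$ bounds; compact embedding extracts a subsequence converging in $C^2_{\mathrm{loc}}(\Omega\setminus\{0\})$ to some nonnegative harmonic $g$ vanishing on $\partial\Omega$; and B\^ocher's theorem (not flux accounting) says any such $g$ is a nonnegative multiple $cG_{\Omega,0}$, so the normalization $g(y_0)=G_{\Omega,0}(y_0)>0$ forces $c=1$. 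The potential "loss of mass" you worry about (i.e.\ $c=0$) is ruled out automatically by the normalization, since $c=0$ would give $g\equiv 0$, contradicting $g(y_0)>0$.

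A secondary, smaller issue: your derivation of $v_i(y)\to 0$ for $y\neq 0$, needed both for (ii) and to invoke Lemma~\ref{lemma:a.e. convergence and bounded by 1}, is gestured at ("$v_i\le$ the bounded harmonic function with the same boundary data as $u_i$ off $B_{2r_i}$ and $u_i\to 0$ a.e.") but not really proved, and as stated it risks circularity. The paper's Step~2 gives a self-contained argument: choose $M$ with $\Omegabar\subset B_M$ and compare $v_i$, on the annulus $B_M\setminus \overline{B_{2r_i}}$, with the explicit harmonic function $\psi_i=\tfrac{2}{\log(2r_i/M)}\log(|x|/M)$, which has boundary values $2$ on $\partial B_{2r_i}$ and $0$ on $\partial B_M$; the maximum principle gives $0<v_i\le\psi_i$ in the annulus, and $\psi_i(y)\to 0$ for fixed $y$. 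You should make this comparison explicit so that (ii), the positivity needed to define $\lambda_i$, and the hypotheses of Lemma~\ref{lemma:a.e. convergence and bounded by 1} are all secured before they are used.
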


\begin{proof}
\textit{Step 1.} Take a sequence of positive real numbers $r_i$ such that $\lim_{i\to\infty}r_i=0$ and choose a subsequence of $u_i$, using Lemma \ref{lemma:limit over Bri same as Omega for subsequence}, such that
\begin{equation}
 \label{eq:proof:lemma st13 Bri}
  F_{\Omega}^{\delta}(0)=\lim_{i\to\infty}F_{\Omega}(u_i)=\lim_{i\to\infty} \int_{B_{r_i}}\frac{e^{\alpha u_i^2}-1}\dxb.
\end{equation}
Choosing again a subsequence we can assume by Lemma \ref{lemma:the capacity argument} that there exist $k_i\in [1,2]$ such that all connected components $A$ of $\{u_i\geq k_i\}$ which intersect $B_{r_i}$ are contained in $B_{2r_i}.$ We define $A_i$ as the union of all such $A.$ We also know from Lemma \ref{lemma:the capacity argument} that $A_i$ is not empty. Let $w_i$ be the solution of (this exists because $A_i$ has smooth boundary)
$$
  \left\{\begin{array}{c}
          \Delta w_i=0\quad\text{in }\Omega\backslash \overline{A}_i 
          \smallskip \\
          w_i=0\quad\text{on }\delomega,\quad w_i=k_i\quad\text{on }\partial A_i.
         \end{array}\right.
$$
We now define $\overline{u}_i\in W^{1,2}_0(\Omega)$ as
$$
  \ovu_i=\left\{\begin{array}{rl}
               u_i&\text{ in }A_i \smallskip \\
               w_i&\text{ in }\Omega\backslash A_i\,.
              \end{array}\right.
$$
Since harmonic functions minimize the Dirichlet integral we have $\|\nabla\ovu_i\|_{L^2}\leq \|\nabla u_i\|_{L^2}.$ 
Thus we have constructed a sequence which has the properties:
\begin{align*}
 \{\ovu_i\geq k_i\}\subset B_{2r_i},\quad \Delta \ovu_i=0\quad\text{in }\{\ovu_i<k_i\}\quad\text{and}\quad
 \|\nabla \ovu_i\|_{L^2}\leq 1.
\end{align*}

\textit{Step 2.}  We will show in this Step that for all $y\in\Omega\backslash\{0\}$ we have $\ovu_i(y)>0$ for all $i$ large enough and $\lim_{i\to\infty}\ovu_i(y)=0.$
The fact that $\ovu_i(y)>0$ follows from the maximum principle.
Since $\Omega$ is bounded there exists $M>0$ such that $\Omegabar\subset B_M.$ Define $W_i=B_M\backslash\overline{B_{2r_i}}$ and let $\psi_i$ be the solution of 
$$
  \left\{\begin{array}{c}
          \Delta \psi_i=0\quad\text{ in }W_i 
          \smallskip \\
          \psi_i=2\;\text{ on }\partial B_{2r_i}\quad\text{ and }
          \quad\psi_i=0\;\text{ on }\partial B_M.
         \end{array}\right.
$$
The function $\psi_i$ can be given explicitly:
$$
  \psi_i=\frac{2}{\log\big(\frac{2r_i}{M}\big)}\log\left(\frac{|x|}{M}\right).
$$
Recall that $k_i\in[1,2]$ and note that
\begin{align*}
 &\psi_i>0\,\text{ and }\,\ovu_i=0\quad\text{ on }\delomega,
 \smallskip \\
 &\psi_i=2\,\text{ and }\,\ovu_i<k_i\leq 2\quad\text{ on }\partial B_{2r_i},
\end{align*}
and thus $\psi_i-\ovu_i>0$ on $\partial W_i$. Since $\ovu_i$ is also harmonic in $W_i$ the maximum principle implies that $\ovu_i\leq \psi_i$ in $W_i$.
For $i$ big enough $y\in W_i$ and the claim of Step 2 follows from the fact that $\lim_{i\to\infty}\psi_i(y)=0.$ 
\smallskip 

\textit{Step 3.} Choose $y\in \Omega\backslash\{0\}$ and define $\lambda_i$ by
\begin{equation}\label{eq:lemma proof:def of lambdai by uiy}
  \lambda_i=\frac{G_{\Omega,0}(y)}{\ovu_i(y)}\quad\Leftrightarrow\quad \lambda_i\ovu_i(y)=G_{\Omega,0}(y)
\end{equation}
In view of Step 2 this is well defined, $\lambda_i>0$ and
$$
  \lim_{i\to\infty}\lambda_i=\infty.
$$
Let $y\in K_1\subset \Omega\backslash\{0\}$ be a compact set. Choose another compact set $K_2,$ such that $K_1\subset\subset K_2\subset \Omega\backslash\{0\}.$
Applying Harnack inequality on $K_2$ we get that there exist $c_1,c_2>0,$ such that
$$
  c_1|G_{\Omega,0}(y)|\leq |\lambda_i \ovu_i(x)|\leq c_2|G_{\Omega,0}(y)|\quad\forall\,x\in K_2\,\text{ and }\,\forall\,i\text{ large enough}.
$$
Hence the sequence $\lambda_iu_i$ is uniformly bounded in the $C^0(K_2)$ norm. Choose $0<\alpha<1.$ It follows from Schauder estimates (see Gilbarg-Trudinger, Corollary 6.3 page 93 and the remark thereafter) that $\lambda_i u_i$ is also uniformly bounded in the $C^{2,\alpha}(K_1)$ norm. Using the compact embedding $C^{2,\alpha}(K_1)\hookrightarrow C^2(K_1)$ we obtain that there exists $g\in C^2(K_1)$ and a subsequence $\ovu_i$ with
$$
  \lambda_i\ovu_i\to g\quad\text{ in }C^2(K_1).
$$
We finally define $v_i=\ovu_i$ as this subsequence. It follows from \eqref{eq:lemma proof:def of lambdai by uiy} and Bocher's theorem (see for instance \cite{Axler-Bourdon-Ramey} Theorem 3.9 page 50) that $g=G_{\Omega,0}.$ 
\smallskip

\textit{Step 4.} It remains to prove (iv). Recall that $\ovu_i\leq k_i$ in $\Omega\backslash A_i$. We therefore obtain, using Lemma \ref{lemma:a.e. convergence and bounded by 1} twice and the definition of $A_i$ that
\begin{align*}
 &\lim_{i\to\infty}\intomega\frac{e^{\alpha \ovu_i^2}-1}\dxb
 =\lim_{i\to\infty}\int_{A_i}\frac{e^{\alpha u_i^2}-1}\dxb
 \geq \lim_{i\to\infty}\int_{A_i\cap B_{r_i}}\frac{e^{\alpha u_i^2}-1}\dxb
  \smallskip \\
 =&\lim_{i\to\infty}\int_{ \{u_i\geq k_i\}\cap B_{r_i}}\frac{e^{\alpha u_i^2}-1}\dxb=\lim_{i\to\infty}\int_{B_{r_i}}\frac{e^{\alpha u_i^2}-1}\dxb=F_{\Omega}^{\delta}(0),
\end{align*}
where we have used \eqref{eq:proof:lemma st13 Bri} in the last equality.
\end{proof}
\smallskip

The next lemma is about the second modification of the sequence $\{u_i\}$ given in Proposition \ref{proposition:main domain to ball}, following the first modification given by Lemma \ref{lemma:main properties step 1-3 in Flucher}.

\begin{lemma}
\label{lemma:the one we sent to Struwe}
Let $\{u_i\}\subset W_0^{1,2}(\Omega)$ be a sequence and $\lambda_i$ a sequence in $\re$ such that $\lambda_i\to\infty,$  
$$
  \lambda_i u_i\to G_{\Omega,0}\quad\text{ in }C^0_{loc}({\Omega}\backslash\{0\})\quad\text{ and }\quad \Delta u_i=0\text{ in }\{u_i<1\}.
$$
Then there exists a subsequence $\lambda_{i_l}$ and a sequence $\{v_l\}\subset W^{1,2}_0(\Omega)$ such that the following properties hold true:

(a) $\lambda_{i_l}\geq l$ 

(b) The sets $\left\{v_l\geq  l/\lambda_{i_l}\right\}$
are approximately small disks at $0$ as $l\to\infty.$

(c) $v_l(x)\to 0$ as $l\to\infty$ for every $x$ in $\Omega\backslash\{0\}.$

(d) For every $l$
$$
  \intomega|\nabla v_l|^2\leq\intomega|\nabla u_{i_l}|^2.
$$

(e) The inequality $v_l\geq u_{i_l}$ holds in $\Omega.$ In particular  
$F_{\Omega}( v_l)\geq F_{\Omega} ( u_{i_l}).$
\end{lemma}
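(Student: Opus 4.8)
The natural attempt is, after fixing a suitable subsequence $i_l$ and putting $c_l:=l/\lambda_{i_l}$, to take $v_l:=u_{i_l}$; I expect no genuine modification is possible or needed. Indeed, $v_l\ge u_{i_l}$ forces $\{v_l\ge c_l\}\supseteq\{u_{i_l}\ge c_l\}$, and below level $c_l$ (where $c_l<1$, so $u_{i_l}$ is harmonic) $u_{i_l}$ has boundary value $c_l$ on $\partial\{u_{i_l}\ge c_l\}$ and $0$ on $\delomega$; if the super-level set were strictly enlarged, the Dirichlet integral of $v_l$ would be at least the energy of the harmonic extension from the enlarged set, which by monotonicity of condenser capacity strictly exceeds $\intomega|\nabla u_{i_l}|^2$, contradicting (d). So $\{v_l\ge c_l\}$ must essentially coincide with $\{u_{i_l}\ge c_l\}$ and below $c_l$ one cannot lower the energy while dominating $u_{i_l}$; hence the content of the lemma is the geometric statement (b). With $v_l=u_{i_l}$: (a) is arranged in the choice of subsequence; (d) and (e) hold with equality; and (c) is immediate, since $u_{i_l}(x)=\lambda_{i_l}^{-1}\big(\lambda_{i_l}u_{i_l}(x)\big)\to0$ for every fixed $x\neq0$ because $\lambda_iu_i\to G_{\Omega,0}$ locally uniformly on $\Omega\setminus\{0\}$ and $\lambda_i\to\infty$. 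Everything thus reduces to (b): choosing $i_l$ and $c_l$ so that $\{u_{i_l}\ge c_l\}$ is an approximately small disk at $0$.

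\textbf{Proof of (b).} First I would upgrade the convergence $\lambda_iu_i\to G_{\Omega,0}$ from $C^0_{\mathrm{loc}}(\Omega\setminus\{0\})$ to $C^2_{\mathrm{loc}}$: on any compact $K\subset\Omega\setminus\{0\}$ one has $u_i<1$ on $K$ for $i$ large (as $u_i\to0$ there), so $u_i$ is harmonic near $K$ and interior elliptic estimates improve the convergence; a barrier argument near $\delomega$ (as in Step 2 of the proof of Lemma~\ref{lemma:main properties step 1-3 in Flucher}) shows in addition that $\{u_i\ge1\}$ eventually lies inside any fixed ball about $0$. Fix $\varepsilon_l\downarrow0$ with $2\varepsilon_l<1$. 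For each $l$ the level sets $\{G_{\Omega,0}=l\pm2\varepsilon_l\}$ lie in the compact annulus $\{\,l-1\le G_{\Omega,0}\le l+1\,\}\subset\Omega\setminus\{0\}$, so for $i$ large (depending on $l$) one has $|\lambda_iu_i-G_{\Omega,0}|<\varepsilon_l$ there, whence $\lambda_iu_i>l$ on $\{G_{\Omega,0}=l+2\varepsilon_l\}$ and $\lambda_iu_i<l$ on $\{G_{\Omega,0}=l-2\varepsilon_l\}$. Applying the minimum principle to the harmonic function $u_i$ on $\{G_{\Omega,0}>l+2\varepsilon_l\}\setminus\{u_i\ge1\}$ — whose boundary lies in $\{G_{\Omega,0}=l+2\varepsilon_l\}\cup\{u_i=1\}$, on both pieces of which $u_i>c_l$, once $i$ is large enough that $\{u_i\ge1\}\subset\{G_{\Omega,0}>l+2\varepsilon_l\}$ — together with $u_i\ge1>c_l$ on $\{u_i\ge1\}$, gives $\{G_{\Omega,0}\ge l+2\varepsilon_l\}\subseteq\{u_i\ge c_l\}$; the maximum principle on the (connected) sublevel set $\{G_{\Omega,0}<l-2\varepsilon_l\}$, where $\lambda_iu_i$ is harmonic and $<l$ on the boundary and $=0$ on $\delomega$, gives the reverse inclusion $\{u_i\ge c_l\}\subseteq\{G_{\Omega,0}\ge l-2\varepsilon_l\}$. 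Now pick $i_l$ strictly increasing with $\lambda_{i_l}>l$ and both inclusions valid for $i=i_l$.

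\textbf{Conclusion and the delicate point.} By Proposition~\ref{proposition:properties of Green's function}(e), $\{G_{\Omega,0}\ge l\pm2\varepsilon_l\}$ are approximately small disks at $0$, squeezed between $B_{\tau_l^{\pm}(1-\eta_l^{\pm})}(0)$ and $B_{\tau_l^{\pm}(1+\eta_l^{\pm})}(0)$ with $\tau_l^{\pm}=I_\Omega(0)e^{-2\pi(l\pm2\varepsilon_l)}$ and $\eta_l^{\pm}\to0$; since $\tau_l^{-}/\tau_l^{+}=e^{8\pi\varepsilon_l}\to1$, the inclusion
$$
  B_{\tau_l^{+}(1-\eta_l^{+})}(0)\ \subseteq\ \{u_{i_l}\ge c_l\}\ \subseteq\ B_{\tau_l^{-}(1+\eta_l^{-})}(0)
$$
exhibits $\{u_{i_l}\ge c_l\}=\{v_l\ge l/\lambda_{i_l}\}$ as an approximately small disk at $0$, proving (b). The delicate part — and where I expect the real work to lie, and presumably where Flucher's original ``point 4'' slipped, its repair using an auxiliary cut-off level — is making \emph{all} of this uniform in $l$: confining $\{u_i\ge1\}$ to a small ball about $0$, keeping $\{u_i\ge c_l\}$ away from $\delomega$, and controlling $u_i$ near $0$ (where the hypothesis on $\lambda_iu_i$ gives nothing, so the minimum principle on $\{G_{\Omega,0}>l+2\varepsilon_l\}\setminus\{u_i\ge1\}$ is the only handle), all with enough uniformity that the squeeze survives the passage $l\to\infty$.
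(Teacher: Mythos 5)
Your approach is genuinely different from the paper's, and I believe it points at something true — but it leaves the main step at the level of an acknowledged heuristic, whereas the paper sidesteps exactly the uniformity problems you flag by actually \emph{modifying} the sequence.

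The paper does not take $v_l=u_{i_l}$. After extracting $i_l$ so that $\lambda_{i_l}\ge l$, $\lambda_{i_l}u_{i_l}\ge l$ on $\partial B_{\rho_l-2\delta_l}$, and $\lambda_{i_l}u_{i_l}\le l$ on $\overline{\Omega}_\eta\setminus B_{\rho_l+2\delta_l}$ (where $B_{\rho_l\pm\delta_l}$ squeeze $\{G_{\Omega,0}\ge l\}$), it sets
$A_l=\{x\in B_{\rho_l-2\delta_l}:\lambda_{i_l}u_{i_l}<l\}$ and defines $v_l=u_{i_l}$ off $A_l$ and $v_l= l/\lambda_{i_l}$ on $A_l$. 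This truncation-from-below makes (b) hold by fiat (the super-level set contains $B_{\rho_l-2\delta_l}$ by construction), (d) is immediate because $\nabla v_l=0$ on $A_l$, and (e) is immediate because $v_l\ge u_{i_l}$. No minimum principle is invoked at all; the only maximum-principle input is the one-sided bound $\lambda_{i_l}u_{i_l}\le l$ outside $B_{\rho_l+2\delta_l}$, which gives the upper inclusion.

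Your opening capacity observation is essentially correct and sharp: under the hypothesis $\Delta u_{i_l}=0$ in $\{u_{i_l}<1\}$, any interior component of $\{u_{i_l}< l/\lambda_{i_l}\}$ not touching $\delomega$ would be a region where a harmonic function equals its boundary value $l/\lambda_{i_l}$ everywhere on the boundary, hence is constant and empty; combined with $u_{i_l}> l/\lambda_{i_l}$ on $\partial B_{\rho_l-2\delta_l}$ this forces $A_l=\emptyset$, so the paper's $v_l$ coincides with $u_{i_l}$. In that sense your strategy is not wrong. But notice what the paper gains by \emph{defining} $v_l$ via truncation rather than \emph{deriving} $A_l=\emptyset$: the construction is insensitive to exactly the issues you identify as ``the delicate part'' — whether $\{u_{i_l}\ge1\}$ is confined to the right shrinking ball, the nonuniformity of the $C^0_{\mathrm{loc}}$ convergence as the annulus $\{l-2\varepsilon_l\le G\le l+2\varepsilon_l\}$ slides toward the singularity, and the regularity of $u_{i_l}$ at $\partial\{u_{i_l}\ge1\}$ (needed to run the minimum principle on $\{G>l+2\varepsilon_l\}\setminus\{u_{i_l}\ge1\}$). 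Your proof defers all of this to a sentence at the end; the paper never needs it. Also be aware that the paper's Remark following the lemma \emph{asserts} modification is necessary and offers an example — that example does not actually satisfy $\Delta u_i=0$ in $\{u_i<1\}$ distributionally (there is a measure on the interface circle), so it does not contradict your heuristic; but you should address this head-on rather than leave it implicit.

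In summary: same subsequence extraction and same use of Proposition~\ref{proposition:properties of Green's function}(e), but a genuinely different mechanism for (b). Yours is a minimum-principle squeeze on $u_{i_l}$ itself; the paper's is a truncation that makes the lower inclusion definitional. The paper's route is the safer one precisely because it does not rely on the part you explicitly did not prove.
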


\begin{remark}\label{remark:Flucher point 4 mistake} (i) Flucher in his paper \cite{Flucher} (see Point 4 page 492) claims that the hypothesis $\lambda_iu_i\to G_{\Omega,x_0}$ in $C^1_{loc}(\Omega\backslash\{0\})$ implies that for some subsequence the sets $\{u_i\geq 1\}$ form approximately small disks.
Lemma \ref{lemma:the one we sent to Struwe} actually shows that to obtain this property it is not sufficient to chose a subsequence, but the sequence has to be modified again.
This is necessary, as shown by the following example:
let $\Omega=B_1(0)$, $\lambda_i=i$ for all $i$ and define
$$
  u_i(x)=\left\{\begin{array}{rl}
                 -\frac{1}{2\pi i}\log(|x|) &\text{ if }|x|\geq e^{-i\pi} 
                 \smallskip \\
                 \frac{1}{2} &\text{ if }|x|\leq e^{-i\pi}.
                \end{array}\right.
$$
Obviously $\lambda_iu_i\to G_{B_1,0}$ in $C^{\infty}_{loc}(\Omega\backslash\{0\}).$
But the sets $\{u_i\geq 1\}$ are empty for all $i.$ One can easily construct an example where even $u_i(0)=0$ for all $i$ and not even the sets $\{u_i\geq s_i\}$ will have the desired property, for any sequence $s_i>0.$
\end{remark}

\begin{proof} Again we abbreviate $G=G_{\Omega,0}.$
\smallskip 

\textit{Step 1.} Let $l\in \mathbb{N}.$ We know from Proposition \ref{proposition:properties of Green's function} (e) that the sets $\{G\geq l\}$ form approximately small disks, that is $B_{\rho_l-\delta_l}\subset\{G\geq l\}\subset B_{\rho_l+\delta_l},$
for two sequences $\rho_l$ and $\delta_l$ tending to zero and satisfying
$
  \lim_{l\to\infty}({\delta_l}/{\rho_l})=0.
$
We know from maximum principle for harmonic functions, respectively Hopf lemma that the following strict inequalities hold 
$$
  G<l\quad\text{ in }\Omegabar\backslash B_{\rho_l+2\delta_l}
  \quad\text{ and }\quad
  G>l\quad\text{ on }\partial B_{\rho_l-2\delta l}.
$$
Let $\epsilon_1>0$ be such that
$G\leq l-\epsilon_1$ on $\Omegabar\backslash B_{\rho_l+2\delta_l}.$
Using now the locally uniform convergence of the hypothesis, we know that there exists $j_l\in\mathbb{N}$ such that
$$
  \|\lambda_i u_i-G\|_{C^0(\Omegabar_{\eta}\backslash B_{\rho_l+2\delta_l})}\leq \epsilon_1\quad\forall\,i\geq j_l,
$$
where $\eta>0$ and $\Omega_{\eta}=\{x\in\Omega|\,\operatorname{dist}(x,\delomega)>\eta\}$ is choosen such that $\overline{B}_{\rho_l+2\delta_l}\in\Omega_{\eta}$ for all $l.$
In particular this implies that
\begin{equation}\label{new lemma proof:lambda i u_i smaller l}
  \lambda_i u_i\leq l\quad\text{ in }\Omegabar_{\eta}\backslash B_{\rho_l+2\delta_l}\quad\forall\,i\geq j.
\end{equation}
Let $\epsilon_2>0$ be such that $G\geq l+\epsilon_2$ on $\partial B_{\rho_l-2\delta_l}.$
We use again locally uniform convergence and choose $i_l\geq j_l$ such that 
$$
  \|\lambda_{i_l}u_{i_l}-G\|_{C^0(\partial B_{\rho_l-2\delta_l})}\leq \epsilon_2.
$$
Moreover, since $\lambda_i\to\infty$ we can assume, by choosing $i_l$ if necessary even larger, that
\begin{equation}\label{lambda il is bigger l}
  \lambda_{i_l}\geq l.
\end{equation}
In particular we obtain that
\begin{equation}
 \label{lambda il u il is bigger than l on partial B}
  \lambda_{i_l} u_{i_l}\geq l\quad\text{ on }\partial B_{\rho_l-2\delta_l}.
\end{equation}
Finally we define the set $A_l$ by
\begin{equation}
 \label{eq:definition of sets A_l}
  A_l=\left\{x\in B_{\rho_l-2\delta_l}:\;\lambda_{i_l}u_{i_l}<l\right\}.
\end{equation}
At last we define
\begin{equation}
 \label{definition of w il}
  v_l=\left\{\begin{array}{rl}
  u_{i_l}&\text{ in }\Omegabar\backslash A_l \smallskip \\
  \frac{l}{\lambda_{i_l}}&\text{ in }A_l\,.
  \end{array}\right.
\end{equation}
Note that $v_l\in W^{1,2}_0(\Omega)$ in view of  \eqref{lambda il u il is bigger than l on partial B} and \eqref{eq:definition of sets A_l}.
\smallskip

\textit{Step 2.} Let us now verify that the new sequence $v_l$ verifies (a)--(e). The first statement (a) is satisfied obviously by \eqref{lambda il is bigger l}. Using the hypothesis $\Delta u_i=0$ in $\{u_i<1\}$ and again \eqref{lambda il is bigger l} we get that $\Delta u_{i_l}=0$ in $\{u_{i_l}<l/\lambda_{i_l}\}.$ Thus from the maximum principle we also have
$$
  \lambda_{i_l}u_{i_l}\leq l\quad\text{ in }\Omegabar\backslash\Omega_{\eta}.
$$
Together with \eqref{new lemma proof:lambda i u_i smaller l} we get
$$
  \left\{\lambda_{i_l}u_{i_l}\geq l\right\}\subset B_{\rho_l+2\delta_l},
$$
which implies, by the definition of $v_l$, that 
$\left\{v_l\geq {l}/{\lambda_{i_l}}\right\}\subset B_{\rho_l+2\delta_l}.$
Moreover from the definition of $A_l$ and the definition of $v_l$ we get that $B_{\rho_l-2\delta_l}\subset\left\{v_l\geq {l}/{\lambda_{i_l}}\right\}.$
This shows (b) indeed:
$$
  B_{\rho_l-2\delta_l}\subset\left\{v_l\geq \frac{l}{\lambda_{i_l}}\right\}\subset B_{\rho_l+2\delta_l}.
$$
Let us now show (c). Let $x\in \Omega\notO.$ Then for all $l$ big enough we get that
$x\in\Omega\backslash B_{\rho_l+2\delta_l}.$ So for those $l$ we have $v_l=u_{i_l}.$ Since we have that $\lambda_{i_l}\to\infty$ and that
$$
  \lambda_{i_l}u_{i_l}(x)\to G(x),
$$
we must have that $u_{i_l}(x)\to 0,$ which proves (c). The statement (d) follows immediately from the definition \eqref{eq:definition of sets A_l} of $v_l$. (e) follows also directly from \eqref{eq:definition of sets A_l} and \eqref{definition of w il}.
\end{proof}

After having modified the sequence $\{u_i\}$ given in Proposition \ref{proposition:main domain to ball} in the two previous lemmas, we finally construct the appropriate corresponding sequence $\{v_i\}\subset W_{0,rad}^{1,2}(B_1).$  This is contained in the following lemma.

\begin{lemma}\label{main lemma si instead of 1. domain to ball}
Let $\{u_i\}\subset W_0^{1,2}(\Omega)$ and $\{s_i\}\subset\re$ be sequences with the following properties:
$$
  s_i\leq 1\quad\forall\,i\in\mathbb{N},
$$
the sets $\{u_i\geq s_i\}$ are approximately small disks at $0$ as $i\to\infty$ and
moreover suppose that pointwise $u_i(x)\to 0$ for all $x\in\Omega\backslash\{0\}.$
Then there exists a sequence $\{v_i\}\subset W^{1,2}_{0,rad}(B_1)$ such that for all $i$
$$
  \|\nabla v_i\|_{L^2(B_1)}\leq \|\nabla u_i\|_{L^2(\Omega)}
$$
and, assuming that the left hand side limit exists, 
$$
  \lim_{i\to\infty}F_{\Omega}(u_i)\leq 
  I_{\Omega}(0)^{2-\beta}\lim\inf_{i\to\infty}F_{B_1}(v_i).
$$
Moreover $v_i(x)\to 0$ for all $x\in B_1\backslash \{0\}$ and if $v_i$ concentrates at some $x_0\in B_1,$ then $x_0=0.$
\end{lemma}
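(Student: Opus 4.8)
The plan is to construct $v_i$ as the weighted symmetric rearrangement of $u_i$ about the origin, rescaled by the factor $I_\Omega(0)$, keeping only the portion of $u_i$ lying above the level $s_i$, and to fill the annulus left over inside $B_1$ by a harmonic (logarithmic) profile whose Dirichlet energy is negligible. I may assume $0<s_i\le 1$, which is the only case needed. First I would discard the low level sets: since $u_i\to 0$ a.e.\ in $\Omega$ and $\{s_i\}$ is bounded, Lemma~\ref{lemma:a.e. convergence and bounded by 1} gives $\int_{\{u_i\le s_i\}}\frac{e^{\alpha u_i^2}-1}{|x|^\beta}\to 0$, so that
$$
  \lim_{i\to\infty}F_\Omega(u_i)=\lim_{i\to\infty}\int_{\{u_i> s_i\}}\frac{e^{\alpha u_i^2}-1}{|x|^\beta}\,dx ,
$$
and I write $B_{\tau_i-\sigma_i}\subset\{u_i\ge s_i\}\subset B_{\tau_i+\sigma_i}$ with $\tau_i\to 0$, $\sigma_i/\tau_i\to 0$.

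For $t\ge s_i$ I set $M_i(t)=\int_{\{u_i>t\}}|x|^{-\beta}dx$, a non-increasing function with $M_i(s_i)\le\tfrac{2\pi}{2-\beta}(\tau_i+\sigma_i)^{2-\beta}\to 0$, and for $i$ large I define $\rho_i(t)\in[0,1)$ by $\tfrac{2\pi}{2-\beta}\rho_i(t)^{2-\beta}=I_\Omega(0)^{-(2-\beta)}M_i(t)$, so that $\int_{B_{\rho_i(t)}}|y|^{-\beta}dy=I_\Omega(0)^{-(2-\beta)}M_i(t)$ and $\rho_i(s_i)\le(\tau_i+\sigma_i)/I_\Omega(0)\to 0$. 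Then $v_i$ is the radial function on $B_1$ which on $\overline{B_{\rho_i(s_i)}}$ is the non-increasing profile with $\{v_i>t\}=B_{\rho_i(t)}$ for $t>s_i$ (so $v_i\equiv s_i$ on $\partial B_{\rho_i(s_i)}$), and on $B_1\setminus B_{\rho_i(s_i)}$ equals $s_i\log|y|/\log\rho_i(s_i)$, the harmonic interpolation between $s_i$ on $\partial B_{\rho_i(s_i)}$ and $0$ on $\partial B_1$. Two required properties are then immediate: $v_i\to 0$ pointwise on $B_1\setminus\{0\}$ because $|\log\rho_i(s_i)|\to\infty$ while $s_i$ is bounded, and since $v_i$ is radial any weak limit of $|\nabla v_i|^2\,dx$ that is a Dirac mass must be $\delta_0$. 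The functional inequality is also essentially built in: since the integrand is nonnegative, $F_{B_1}(v_i)\ge\int_{B_{\rho_i(s_i)}}\frac{e^{\alpha v_i^2}-1}{|y|^\beta}dy$, and expanding $e^{\alpha v_i^2}-1$ by the layer-cake formula together with $\int_{\{v_i>t\}}|y|^{-\beta}dy=I_\Omega(0)^{-(2-\beta)}M_i(t)$ for $t\ge s_i$ shows — this being the computation in the proof of Theorem~\ref{theorem:ball to general domain:sup inequality} run backwards — that $I_\Omega(0)^{2-\beta}\int_{B_{\rho_i(s_i)}}\frac{e^{\alpha v_i^2}-1}{|y|^\beta}dy=\int_{\{u_i>s_i\}}\frac{e^{\alpha u_i^2}-1}{|x|^\beta}dx$; taking $\liminf$ and using the first paragraph yields $\lim F_\Omega(u_i)\le I_\Omega(0)^{2-\beta}\liminf F_{B_1}(v_i)$.

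The substance is the Dirichlet bound $\|\nabla v_i\|_{L^2(B_1)}\le\|\nabla u_i\|_{L^2(\Omega)}$. I would split both sides along the level $s_i$: $\|\nabla v_i\|^2=E_i^{\mathrm d}+E_i^{\mathrm a}$ with $E_i^{\mathrm d}=\int_{B_{\rho_i(s_i)}}|\nabla v_i|^2$, $E_i^{\mathrm a}=\int_{B_1\setminus B_{\rho_i(s_i)}}|\nabla v_i|^2$, and $\|\nabla u_i\|^2=D_i^{\mathrm d}+D_i^{\mathrm a}$ with $D_i^{\mathrm d}=\int_{\{u_i>s_i\}}|\nabla u_i|^2$, $D_i^{\mathrm a}=\int_{\{u_i\le s_i\}}|\nabla u_i|^2$. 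For the disk parts the coarea formula gives $E_i^{\mathrm d}=2\pi(2-\beta)\int_{s_i}^\infty\frac{M_i(t)}{-M_i'(t)}\,dt$ and $D_i^{\mathrm d}=\int_{s_i}^\infty\bigl(\int_{\{u_i=t\}}|\nabla u_i|\,d\sigma\bigr)dt$, with $-M_i'(t)=\int_{\{u_i=t\}}\frac{d\sigma}{|x|^\beta|\nabla u_i|}$ for a.e.\ $t$; by Cauchy--Schwarz on $\{u_i=t\}$,
$$
  \Bigl(\int_{\{u_i=t\}}|x|^{-\beta/2}\,d\sigma\Bigr)^{2}\le\Bigl(\int_{\{u_i=t\}}|\nabla u_i|\,d\sigma\Bigr)\,(-M_i'(t)),
$$
and the weighted isoperimetric inequality of Csat\'o \cite{Csato} — the isoperimetric inequality for the conical metric $|x|^{-\beta}|dx|^2$ which also underlies Theorem~\ref{theorem:upper bound for R by Greens function with weight} — bounds the left-hand side below by $2\pi(2-\beta)\int_{\{u_i>t\}}|x|^{-\beta}dx=2\pi(2-\beta)M_i(t)$. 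Combining slice by slice and integrating in $t$ gives $E_i^{\mathrm d}\le D_i^{\mathrm d}$. For the annulus parts one computes $E_i^{\mathrm a}=\dfrac{2\pi s_i^2}{|\log\rho_i(s_i)|}\to 0$, while truncating $u_i$ at level $s_i$ exhibits a multiple of $u_i$ as a competitor for the condenser capacity $\operatorname{cap}_\Omega(\{u_i\ge s_i\})$, whence $D_i^{\mathrm a}\ge s_i^2\operatorname{cap}_\Omega(\{u_i\ge s_i\})\ge s_i^2\operatorname{cap}_\Omega(B_{\tau_i-\sigma_i})$; using the sharp asymptotics of the capacity of a shrinking ball, $\operatorname{cap}_\Omega(B_\tau)=\frac{2\pi}{\log(I_\Omega(0)/\tau)}(1+o(1))$ as $\tau\to 0$ — which is where the incenter $I_\Omega(0)=e^{-2\pi H_{\Omega,0}(0)}$ enters through the regular part of the Green's function — together with $|\log\rho_i(s_i)|\ge\log(I_\Omega(0)/(\tau_i+\sigma_i))$, one obtains $E_i^{\mathrm a}\le D_i^{\mathrm a}$ for $i$ large; modifying finitely many $v_i$ (or the level used) handles the remaining indices, and adding the two estimates gives the claim. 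This, incidentally, is the only point where $0\in\Omega$ and the fact that the concentration occurs at $0$ are used.

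The hard part will be precisely this last step. One needs the sharp weighted isoperimetric inequality available as a black box (a deep, separately established result), and then an \emph{exact} — not merely asymptotic — comparison of the annulus energy $\tfrac{2\pi s_i^2}{|\log\rho_i(s_i)|}$ with $s_i^2\operatorname{cap}_\Omega(\{u_i\ge s_i\})$, whose leading logarithms cancel, so that either the sub-leading capacity expansion with $H_{\Omega,0}(0)$ or the isoperimetric surplus $D_i^{\mathrm d}-E_i^{\mathrm d}$ (controlled by the precise rate at which the sets $\{u_i>t\}$ approach disks) must carry the difference; this is the most delicate estimate in the whole argument, since the lemma demands $\|\nabla v_i\|\le\|\nabla u_i\|$ for \emph{every} $i$, leaving no room to absorb an $o(1)$ error by rescaling. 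A secondary technical burden is the rigorous justification of all the coarea identities for functions that are only $W^{1,2}$ (resp.\ Lipschitz), for which Sard's theorem and approximation are needed.
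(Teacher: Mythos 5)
Your construction is genuinely different from the paper's: you rearrange with respect to the singular measure $|x|^{-\beta}dx$ and rescale so that the functional identity on the disk part is \emph{exact}, whereas the paper uses the ordinary Schwarz rearrangement $u_i^{\ast}$ (together with the modified Hardy--Littlewood inequality of Proposition~\ref{prop:Hardy littlewoood modified}(i)) and a Euclidean dilation, so its functional inequality carries a factor $(a_i/\delta_i)^{2-\beta}$ that only \emph{converges} to $I_{\Omega}(0)^{2-\beta}$. That exchange is exactly what costs you the proof.

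The gap is the annulus Dirichlet estimate, and you have correctly identified it as the crux; but the fix you suggest (``modifying finitely many $v_i$'') does not work, because the failure is not confined to finitely many indices. With your choice of inner radius $\rho_i(s_i)$, defined by $\int_{B_{\rho_i(s_i)}}|y|^{-\beta}dy=I_{\Omega}(0)^{-(2-\beta)}\int_{\{u_i>s_i\}}|x|^{-\beta}dx$, the inclusion $\{u_i\ge s_i\}\subset B_{\tau_i+\sigma_i}$ only gives $\rho_i(s_i)\le(\tau_i+\sigma_i)/I_\Omega(0)$, so $E_i^{\mathrm a}=\frac{2\pi s_i^2}{|\log\rho_i(s_i)|}\le\frac{2\pi s_i^2}{\log\bigl(I_\Omega(0)/(\tau_i+\sigma_i)\bigr)}$. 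On the other side, your capacity lower bound for $D_i^{\mathrm a}$ passes through $\operatorname{cap}_\Omega(B_{\tau_i-\sigma_i})$, whose asymptotics give (at best) something like $\frac{2\pi s_i^2}{\log\bigl(I_\Omega(0)/(\tau_i-\sigma_i)\bigr)+o(1)}$. Since $\tau_i+\sigma_i>\tau_i-\sigma_i$, the two logarithms are ordered the \emph{wrong} way for the needed inequality $E_i^{\mathrm a}\le D_i^{\mathrm a}$, and the mismatch, while $o(1)$, is of indeterminate sign and cannot be absorbed: the lemma requires $\|\nabla v_i\|_{L^2}\le\|\nabla u_i\|_{L^2}$ for \emph{every} $i$, and a rescaling $v_i/(1+o(1))$ changes $F_{B_1}(v_i)$ by an amount that does not vanish along a concentrating sequence.

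The paper circumvents this by never touching the asymptotic expansion of capacity. In Step~1 of the proof it chooses a level $t_i$ (with $t_i-\lambda_i\to 0$) so that $\{G_{\Omega,0}\ge t_i\}\subset\{u_i\ge s_i\}$, and then in Step~2 obtains the \emph{exact} bound $\int_{\{u_i<s_i\}}|\nabla u_i|^2\ge s_i^2/t_i$ from the fact that $\operatorname{cap}_\Omega(\{G_{\Omega,0}\ge t_i\})=1/t_i$ identically (Proposition~\ref{proposition:properties of Green's function}(a)). It then takes the inner radius of the annulus to be $\delta_i=e^{-2\pi t_i}$, so the annulus energy of $v_i$ is \emph{exactly} $s_i^2/t_i$, and the Dirichlet comparison closes with no error term at all; the price is that the functional comparison involves the ratio $a_i/\delta_i$, which is controlled only in the limit (Step~5). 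To make your approach work you would have to decouple the two choices the same way --- use a smaller inner radius $\delta_i=e^{-2\pi t_i}$ pegged to a Green's function level set for the annulus, rather than $\rho_i(s_i)$ pegged to the weighted measure of $\{u_i>s_i\}$ --- and then pick up an $(a_i/\delta_i)$-type factor to be sent to $I_\Omega(0)$ in the limit, at which point you are essentially reproducing the paper's proof, only with the weighted rearrangement in place of the Euclidean one.

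A smaller point: the weighted P\'olya--Szeg\H{o} step via Cauchy--Schwarz plus a weighted isoperimetric inequality needs a statement of the form $\bigl(\int_{\partial A}|x|^{-\beta/2}\,d\sigma\bigr)^2\ge 2\pi(2-\beta)\int_A|x|^{-\beta}\,dx$ valid for the superlevel sets $\{u_i>t\}$ for a.e.\ $t\ge s_i$, which need not contain the origin; if you rely on \cite{Csato} as a black box you should verify that it applies without that hypothesis (or argue that the sets can be assumed to contain $0$). The paper sidesteps this too, since it invokes \cite{Csato} only in the ball-to-domain direction (Theorem~\ref{theorem:upper bound for R by Greens function with weight}), where the sets are level sets of $G_{\Omega,0}$ and contain $0$ by construction.
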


\begin{remark}\label{remark:Flucher thesis with si}
(i) Flucher in his paper \cite{Flucher} states and proves this result only for the constant sequence $s_i=1$ for all $i.$ This is not sufficient to prove Proposition \ref{proposition:main domain to ball}, not even for the Moser-Trudinger functional (i.e. $\beta=0$). However, in Flucher's thesis \cite{Flucher Thesis}, this proof is correct. Unfortunately, this thesis is not easily accessible. 
\smallskip

(ii) Note that we make no assumption on the radius of the approximately small disks $\{u_i\geq s_i\},$ nor do we assume any kind of convergence of the $u_i$ towards $G_{\Omega,0}.$
\end{remark}

\begin{proof} Throughout this proof $G=G_{\Omega,0}$ shall denote the Green's function of $\Omega$ with singularity at $0.$
Recall that by assumption there exists real positive numbers $\rho_i$ and $\epsilon_i$ such that for $i\to\infty$
 \begin{equation}
  \label{eq:lemma si instead of 1:lambda to infty and eps over rho to zero}
   \rho_i\to 0\quad\text{ and }\quad \frac{\epsilon_i}{\rho_i}\to 0,
 \end{equation}
satisfying for all $i$ the following inclusion
\begin{equation}
 \label{eq:lemma si instead of 1:ui bigger 1 are asymptotic balls}
  B_{\rho_i-\epsilon_i}\subset\{u_i\geq s_i\}\subset B_{\rho_i+\epsilon_i}.
\end{equation}
\smallskip

\textit{Step 1.} 
Let us define $\lambda_i$, implicitly, by the following equation: 
\begin{equation}
 \label{eq:lemma:definition of rhoi via conf. incenter}
  \rho_i=I_{\Omega}(0)e^{-2\pi \lambda_i},
\end{equation}
that is
$$
  \lambda_i=-\frac{1}{2\pi}\log\left(\frac{\rho_i}{I_{\Omega}(0)}\right).
$$
Note that $\lambda_i\to\infty$ as $i\to\infty.$ We claim that there exists $t_i\geq \lambda_i$ such that 
\begin{equation}
 \label{eq:proof:ti-lambdai goes to zero}
 \lim_{i\to\infty}(t_i-\lambda_i)=0
\end{equation}
and
\begin{equation}
 \label{eq:proof:G bigger ti in ui bigger 1}
  \{G\geq t_i\}\subset\{u_i\geq s_i\}.
\end{equation}
We know from Proposition \ref{proposition:properties of Green's function} (e) that if $t_i\geq 0$ is such that $t_i\to\infty$, then there exists $\sigma_i> 0$ such that
$$
  \lim_{i\to\infty}\frac{\sigma_i}{\tau_i}=0
  \quad\text{ and }\quad 
  B_{\tau_i-\sigma_i}\subset\{G\geq t_i\}\subset B_{\tau_i+\sigma_i},
$$
where $\tau_i=I_{\Omega}(0)e^{-2\pi t_i}.$ In view of \eqref{eq:lemma si instead of 1:ui bigger 1 are asymptotic balls} it is therefore sufficient to choose $t_i$ such that
\begin{equation}
 \label{eq:proof lemma:implicit def. of ti}
  \tau_i+\sigma_i=\rho_i-\epsilon_i\,.
\end{equation}
It remains to show that with this choice \eqref{eq:proof:ti-lambdai goes to zero} is also satisfied. Using \eqref{eq:lemma:definition of rhoi via conf. incenter} and solving the previous equation for $t_i$ explicitly gives that
$$
  t_i=\lambda_i-\frac{1}{2\pi}\log\left(1-\frac{\epsilon_i+\sigma_i}{\rho_i}\right).
$$
Since we know from \eqref{eq:lemma si instead of 1:lambda to infty and eps over rho to zero} that 
$\epsilon_i/\rho_i\to0,$ it is sufficient to show that $\sigma_i/\rho_i\to 0.$ We obtain from \eqref{eq:proof lemma:implicit def. of ti} that
$$
  \frac{\sigma_i}{\tau_i}=\frac{\sigma_i}{\rho_i-\epsilon_i-\sigma_i}
  =\frac{\sigma_i}{\rho_i\left(1-\frac{\epsilon_i}{\rho_i}-\frac{\sigma_i}{\rho_i} \right)}.
$$
Solving this equation for  $(\sigma_i/\rho_i)$ and using that $\epsilon_i/\rho_i\to 0$ and $\sigma_i/\tau_i\to 0$ shows that also $(\sigma_i/\rho_i)\to 0.$ This proves \eqref{eq:proof:ti-lambdai goes to zero}.

\smallskip

\textit{Step 2.} In this step we will show that
\begin{equation}
 \label{eq:proof lemma: step 2 grad norm of ui and ti}
  \int_{\{u_i<s_i\}}|\nabla u_i|^2\geq \frac{s_i^2}{t_i}.
\end{equation}
Let us denote
$$
  U=\{u_i\geq s_i\}\quad\text{ and }\quad V=\{G\geq t_i\}.
$$
From Step 1 we know that $V\subset U$ and by assumption $U\subset \Omega.$ Let $h_i$ be the unique solution of the problem
$$
  \left\{
  \begin{array}{c}
  \Delta h_i=0\quad\text{in }\Omega\backslash V 
  \smallskip \\
  h_i=0\quad\text{on }\delomega\quad\text{and}\quad h_i=1\quad\text{on }  \partial V.
  \end{array}\right.
$$
We see that this is satisfied precisely by $h_i=G/t_i$. Let us define $w_i\in W^{1,2}(\Omega\backslash V)$ by
$$
  w_i=\left\{\begin{array}{lr}
              \frac{u_i}{s_i} &\text{ in }\Omega\backslash U 
              \smallskip \\
              1 &\text{ in } U\backslash V.
             \end{array}\right.
$$
Note that $w_i$ has the same boundary values as $h_i$ on the boundary of $\Omega\backslash V.$ Since $h_i$ minimizes the Dirichlet integral among all such functions we get that
$$
  \int_{\Omega\backslash V}|\nabla h_i|^2\leq \int_{\Omega\backslash V}|\nabla w_i|^2=\int_{\Omega\backslash U}|\nabla w_i|^2=
  \frac{1}{s_i^2}
  \int_{\{u_i<s_i\}}|\nabla u_i|^2.
$$
From Proposition \ref{proposition:properties of Green's function} (a) we know that
$$
  \int_{\Omega\backslash V}|\nabla h_i|^2 =\int_{\{G<t_i\}}\left|\nabla\left(\frac{G}{t_i}\right)\right|^2=\frac{1}{t_i}.
$$
Setting this into the previous inequality proves \eqref{eq:proof lemma: step 2 grad norm of ui and ti}.

\smallskip

\textit{Step 3.} In this step we will define $v_i\in W_{0,rad}^{1,2}(B_1).$ 
Let $\Omega^{\ast}=B_R$ be 
the symmetrized domain and $u_i^{\ast}\in W_{0,rad}^{1,2}(B_R)$ be the radially decreasing symmetric rearrangement of $u_i$. Then there exists $0<a_i<R$ such that
\begin{equation}
 \label{eq:proof lemma step 3:definition of ai}
 \{u_i^{\ast}\geq s_i\}=B_{a_i}.
\end{equation}
Moreover define $0<\delta_i<1$ by 
$\delta_i=e^{-2\pi t_i}.$
At last we can define $v_i$ as
$$
  v_i(x)=\left\{\begin{array}{lr}
                 -\frac{s_i}{2\pi t_i}\log(x) &\text{ if }x\geq \delta_i
                 \smallskip \\
                 u_i^{\ast}\left(\frac{a_i}{\delta_i}x\right) & \text{ if }x \leq \delta_i\,.
                \end{array}\right.
$$
Note that $v_i$ belongs indeed to $W^{1,2}(B_1)$ since the two values coincide if $x=\delta_i$.
\smallskip 

\textit{Step 4.} In this Step we will show that $\|\nabla v_i\|_{L^2(B_1)}\leq \|\nabla u_i\|_{L^2(\Omega)}.$ Let us denote
$$
  A_i=\int_{B_1\backslash B_{\delta_i}}|\nabla v_i|^2\quad\text{ and }
  D_i=\int_{B_{\delta_i}}|\nabla v_i|^2.
$$
A direct calculation gives that
$$
  A_i=s_i^2\int_{\delta_i}^1\frac{2\pi}{(2\pi t_i)^2\,r}dr=\frac{s_i^2}{t_i}.
$$
Using a change of variables and Proposition \ref{prop:Hardy littlewoood modified} (ii) gives that
$$
  D_i=\int_{B_{a_i}}|\nabla u_i^{\ast}|^2=\int_{\{u_i^{\ast}\geq s_i\}} |\nabla u_i^{\ast}|^2\leq \int_{\{u_i\geq s_i\}}|\nabla u_i|^2.
$$
Finally we get that, using \eqref{eq:proof lemma: step 2 grad norm of ui and ti}, that
$$
  \int_{B_1}|\nabla v_i|^2=D_i+A_i\leq \int_{\Omega}|\nabla u_i|^2-\int_{\{u_i<s_i\}}|\nabla u_i|^2+\frac{s_i^2}{t_i}\leq \int_{\Omega}|\nabla u_i|^2.
$$
\smallskip

\textit{Step 5.} In this step we show that
$$
  \lim_{i\to\infty}\frac{a_i}{\delta_i}=I_{\Omega}(0).
$$
Using the fact that $|\{u_i^{\ast}\geq s_i\}|=|\{u_i\geq s_i\}|,$ \eqref{eq:proof lemma step 3:definition of ai} and the hypthesis \eqref{eq:lemma si instead of 1:ui bigger 1 are asymptotic balls} we obtain the inequaly $\rho_i-\epsilon_i\leq a_i\leq \rho_i+\epsilon_i.$
From this we obtain that
$$
  \frac{\rho_i}{\delta_i}\left(1-\frac{\epsilon_i}{\rho_i}\right) 
  \leq\frac{a_i}{\delta_i}\leq
  \frac{\rho_i}{\delta_i}\left(1+\frac{\epsilon_i}{\rho_i}\right).
$$
From the hypothesis \eqref{eq:lemma si instead of 1:lambda to infty and eps over rho to zero} we know that $\epsilon_i/\rho_i\to 0.$ It is therefore sufficient to calculate the limit of $\rho_i/\delta_i$. In view the definition of $\rho_i$ and \eqref{eq:proof:ti-lambdai goes to zero} this is indeed equal to
$$
  \lim_{i\to\infty}\frac{\rho_i}{\delta_i}=\lim_{i\to\infty}I_{\Omega}(0)e^{2\pi(t_i-\lambda_i)}=I_{\Omega}(0),
$$
which proves the statement of this step.

\smallskip

\textit{Step 6 (equality of functional limit).} Let us first show that both $u_i$ and $v_i$ converge to zero almost everywhere. For $u_i$ this holds true by hypothesis. So let $x\in B_1\backslash\{0\}$ be given and note that for all $i$ big enough
$$
  x\geq e^{-2 \pi\sqrt{t_i}}\geq e^{-2\pi t_i}=\delta_i\,.
$$
Therefore we obtain from the definition of $v_i$ and the fact that $s_i$ are bounded, that
$$
  v_i(x)\leq-\frac{s_i}{2\pi t_i}\log\left(e^{-2\pi\sqrt{t_i}}\right)= \frac{s_i}{\sqrt{t_i}}\to 0,
$$
which shows the claim also for $v_i$. In view of Lemma \ref{lemma:a.e. convergence and bounded by 1} it is therefore sufficient to show that
\begin{equation}
 \label{eq:proof lemma:functional equality on ui bigger 1}
  \lim_{i\to\infty}\int_{\{u_i\geq s_i\}}\frac{e^{\alpha u_i^2}-1}{|x|^\beta}
  =I_{\Omega}^{2-\beta}(0)
  \lim_{i\to\infty}\int_{\{v_i\geq s_i\}}\frac{e^{\alpha v_i^2}-1}{|x|^\beta}.
\end{equation}
From Proposition \ref{prop:Hardy littlewoood modified} (i) and the properties of symmetrization we get that for every $i$
$$
  \int_{\{u_i\geq s_i\}}\frac{e^{\alpha u_i^2}-1}{|x|^\beta}\leq
  \int_{\{u_i^{\ast}\geq s_i\}}\frac{e^{\alpha (u_i^{\ast})^2}-1}{(|x|^\beta)^{\ast}}
  =
  \int_{B_{a_i}}\frac{e^{\alpha (u_i^{\ast})^2}-1}{(|x|^\beta)^{\ast}}.
$$
(If $\beta=0$ then the inequality can actally be replaced by an equality, see Kesavan, page 14, equation (1.3.2)). For $i$ big enough $B_{a_i}\subset \Omega,$ and then $(|x|^{\beta})^{\ast}=|x|^{\beta}$ for all $x\in B_{a_i}.$ Making the substitution $x=(a_i/\delta_i)\,y$ gives
$$
  \int_{\{u_i\geq s_i\}}\frac{e^{\alpha u_i^2}-1}{|x|^\beta}\leq
  \left(\frac{a_i}{\delta_i}\right)^{2-\beta}\int_{B_{\delta_i}} \frac{e^{\alpha v_i^2}-1}{|y|^\beta}
  =\left(\frac{a_i}{\delta_i}\right)^{2-\beta}\int_{\{v_i\geq s_i\}} \frac{e^{\alpha v_i^2}-1}{|x|^\beta}.
$$
From Step 5 we therefore get that
$$
  \lim_{i\to\infty}\int_{\{u_i\geq s_i\}}\frac{e^{\alpha u_i^2}-1}{|x|^\beta}\leq
  I_{\Omega}^{2-\beta}(0)\lim\inf_{i\to\infty}\int_{\{v_i\geq s_i\}} \frac{e^{\alpha v_i^2}-1}{|x|^\beta},
$$ 
which proves \eqref{eq:proof lemma:functional equality on ui bigger 1} and hence concludes the proof of the lemma.
\end{proof}
\smallskip

We are now able to prove the main proposition of this section.
\smallskip

\begin{proof}[Proof (Proposition \ref{proposition:main domain to ball}).]
We know from Lemma \ref{lemma:main properties step 1-3 in Flucher} that there exists a sequence, which we call again $\{u_i\}\subset \mathcal{B}_1(\Omega),$ and a sequence $\lambda_i$ such that the properties (i)--(iv) of Lemma \ref{lemma:main properties step 1-3 in Flucher} are satisfied. We now aplly Lemma \ref{lemma:the one we sent to Struwe} to get a new sequence $\{v_l\}\subset  \mathcal{B}_1(\Omega)$ which satisfies properties (a)--(d). Moreover we obtain from property (iv) of $u_i$ and (e) that
$$
  \liminf_{l\to\infty}F_{\Omega}(v_l)\geq F^{\delta}_{\Omega}(0).
$$
Let us again rename $\lambda_{i_l}$ by $\lambda_i$ and $v_l$ by $u_i$. We define
$$
  s_i=\frac{i}{\lambda_i}.
$$
By (a) we obtain that $s_i\leq 1$ for all $i.$ By (b)  the hypothesis $\{u_i\geq s_i\}$ being approximately small disks of Lemma \ref{main lemma si instead of 1. domain to ball} is satisfied. We therefore obtain from Lemma \ref{main lemma si instead of 1. domain to ball} (taking again a subsequence which achieves $\liminf$) that there exists $\{v_i\}\subset W_{0,rad}^{1,2}(B_1)\cap\mathcal{B}_1(B_1)$ such that
$$
  F^{\delta}_{\Omega}(0)\leq\lim_{i\to\infty}F_{\Omega}(u_i)\leq I_{\Omega}^{2-\beta}(0)\liminf_{i\to\infty}F_{B_1}(v_i).
$$
It remains to show that the $\{v_i\}$ has to concentrate at $0.$ If $v_i$ does not concentrate at $0,$ then (cf. Lemma \ref{main lemma si instead of 1. domain to ball}) it does not concentrate at all. 
We therefore get from the concentration compactness alternative Theorem \ref{theorem:concentration alternative for singular moser trudinger} that for some subsequence
$$
  \liminf_{i\to\infty}F_{B_1}(v_i)=\lim_{i\to\infty}F_{B_1}(v_i)=0.
$$
But this leads to the  contradiction  $F_{\Omega}^{\delta}(0)= 0,$ see Remark \ref{remark:F delta zero is not zero}.
\end{proof}

\smallskip

\section{Proof of the Main Theorem}\label{section:proof main theorem}

We now prover Theorem \ref{theorem:intro:Extremal for Singular Moser-Trudinger}.
\smallskip

\begin{proof} In view of Proposition \ref{proposition:sup attained if 0 notin Omegabar} we can assume that $0\in\Omegabar.$ We distinguish to cases.
\smallskip

\textit{Case 1: $0\in\Omega.$}
From Theorems \ref{theorem:concentration formula by domain to ball}, \ref{theorem:supremum of FOmega on Ball} and \ref{theorem:ball to general domain:sup inequality} we know that
$$
  F_{\Omega}^{\delta}(0)=I_{\Omega}^{2-\beta}(0)F_{B_1}^{\delta}(0)< I_{\Omega}^{2-\beta}(0) F_{B_1}^{\sup}\leq F_{\Omega}^{\sup}.
$$
Thus we obtain, using also Proposition \ref{proposition:if u_i concentrates somewhere else than zero}, that $F_{\Omega}^{\delta}(x)<F_{\Omega}^{\sup}$ for all $x\in\Omegabar.$ This implies that maximizing sequences cannot concentrate and the result follows from Theorem \ref{theorem:concentration alternative for singular moser trudinger}.
\smallskip

\textit{Case 2: $0\in\delomega$.} We will show that $F^{\delta}_{\Omega}(0)=0$ in this case. Let $\{u_i\}\subset\mathcal{B}_1(\Omega)$ be a sequence concentrating at $0$ such that
$$
  \lim_{i\to\infty}F_{\Omega}(u_i)=F_{\Omega}^{\delta}(0).
$$
Choose a sequence of bounded smooth open domains $\Omega_n$ which have the property
$$
  0\in\Omega_n,\quad\Omega\subset\Omega_n\quad \forall\,n\in\mathbb{N}\quad\text{ and }\quad\lim_{n\to\infty}|0-\delomega_n|=0,
$$
where $|0-\delomega_n|$ denotes the distance between $0$ and $\delomega_n$. Define for each $n\in\mathbb{N}$ the functions $u_i^n\in\mathcal{B}_1(\Omega_n)$ by extending $u_i$ by zero in $\Omega_n\backslash \Omegabar.$
Note that for each fixed $n$ the sequence $\left\{u_i^n\right\}_{i\in\mathbb{N}}$ concentrates at $0.$ We therefore obtain from Theorem \ref{theorem:concentration formula by domain to ball} that for each $n$
$$
  \lim_{i\to\infty}F_{\Omega}(u_i)=\lim_{i\to\infty}F_{\Omega_n}(u_i^n) \leq F_{\Omega_n}^{\delta}(0)=I_{\Omega_n}^{2-\beta}(0) F_{B_1}^{\delta}(0).
$$
Thus we have shown that for every $n$
$$
  F_{\Omega}^{\delta}(0)\leq I_{\Omega_n}^{2-\beta}(0)F_{B_1}^{\delta}(0).
$$
We now let $n\to\infty$ and use the estimate (see Flucher \cite{Flucher} page 485, proof of Proposition 12 Part 2)
$$
  I_{\Omega_n}(0)\leq 6|0-\delomega_n|,
$$
to obtain that $F_{\Omega}^{\delta}(0)=0.$ So if $0\in\delomega,$ then $F_{\Omega}^{\delta}(x)=0$ for all $x\in\Omegabar.$ We conclude as in Case 1.
\end{proof}

\bigskip

\noindent\textbf{Acknowledgements} We have benefitted from helpful discussions with A. Adimurthi, K. Sandeep and M. Struwe.

\end{document}